\newtheorem{thm}{Theorem}[section]
\newtheorem{lem}[thm]{Lemma}
\newtheorem{prop}[thm]{Proposition}
\theoremstyle{definition}
\numberwithin{equation}{section}
\begin{document}

\title[Value-distributions of Dedekind zeta functions]{On the value-distributions of logarithmic derivatives of Dedekind zeta functions}

\author[M. Mine]{Masahiro Mine}
\address{Department of Mathematics\\ Tokyo Institute of Technology\\ 
2-12-1 $\hat{\text{O}}$okayama, Meguro-ku, Tokyo 152-8551, Japan}
\email{mine.m.aa@m.titech.ac.jp}

\date{}

\begin{abstract}
We study the distributions of values of the logarithmic derivatives of the Dedekind zeta  functions on a fixed vertical line. 
The main object is determining and investigating the density functions of such value-distributions for any algebraic number field. 
We construct the density functions as the Fourier inverse transformations of certain functions represented by infinite products that come from the Euler products of the Dedekind zeta functions. 
\end{abstract}

\subjclass[2010]{Primary 11R42; Secondary 11M41}

\keywords{Dedekind zeta function, value-distribution, density function}

\maketitle

\section{Introduction}\label{sec1}
Let $s=\sigma+it$ be a complex variable, and let $\zeta(s)$ denote the Riemann zeta function. 
The study of the value-distribution of $\zeta(s)$ is a classical topic in number theory.
After several results on the values of $\log\zeta(s)$ and $(\zeta'/\zeta)(s)$, Bohr and Jessen \cite{BohrJessen} arrived at the following result. 
Fix a rectangle $R$ in the complex plane whose edges are parallel to the axis. 
For any $T>0$, let $V_\sigma(T,R)$ be the Lebesgue measure of the set of all $t\in[-T,T]$ for which $\log\zeta(\sigma+it)$ belongs to $R$. 
Then they proved the existence of the limit 
\begin{equation}\label{eq1.1}
W_\sigma(R)=\lim_{T\to\infty}\frac{1}{2T}V_\sigma(T,R)
\end{equation}
for any $\sigma>1/2$. 
The method of Bohr and Jessen was improved by Jessen and Wintner \cite{JessenWintner}. 
Kershner and Wintner \cite{KershnerWintner} proved an analogous result for $(\zeta'/\zeta)(s)$. 

Bohr and Jessen also proved that the limit $W_\sigma(R)$ can be written as an integral containing a certain continuous and non-negative function $\mathcal{F}_\sigma(z)$ as follows: 
\begin{equation}\label{eq1.2}
W_\sigma(R)=\int_R\mathcal{F}_\sigma(z)dxdy,
\end{equation}
where $z=x+iy$.
The existence of the limit \eqref{eq1.1} was generalized to $L$-functions of cusp forms and Dedekind zeta functions by Matsumoto \cite{Matsumoto1, Matsumoto2, Matsumoto3}. 
On the other hand, the generalization of the integral formula \eqref{eq1.2} has been restricted within the case of Dedekind zeta functions associated with Galois extensions of $\mathbb{Q}$ \cite{Matsumoto3}. 

Then, in the last decade Ihara studied the various averages of the values of $\log L(s,\chi)$ and $(L'/L)(s,\chi)$ of $L$-functions $L(s,\chi)$ associated with characters $\chi$.  
Let $K$ be any global field and $\chi$ a character on $K$, and fix $s=\sigma+i\tau$. 
In \cite{Ihara}, he considered the existence of the function $M_\sigma(z)$ for which the formula 
\begin{equation}\label{eq1.3}
\textrm{Ave}_\chi\Phi\Big(\frac{L'(s,\chi)}{L(s,\chi)}\Big)
=\int_\mathbb{C}\Phi(z)M_\sigma(z)|dz|
\end{equation}
holds for a sufficiently wide class of test functions $\Phi$, where $|dz|$ denotes the measure $(2\pi)^{-1}dxdy$. 
The meanings of the averages $\textrm{Ave}_\chi$ are given in \cite{Ihara} for the three cases: 

(A) The field $K$ is either $\mathbb{Q}$, an imaginary quadratic field, or a function 
field over $\mathbb{F}_q$. 
The character $\chi$ runs over Dirichlet characters on $K$; 

(B) The field $K$ is a number field having at least two archimedean primes, and
$\chi$ runs over normalized unramified Gr\"{o}ssencharacters, 
which are defined in \cite{Ihara}, of $K$;

(C) $K=\mathbb{Q}$ and $\chi=\chi_t$, where $t\in\mathbb{R}$ 
and $\chi_t(p)=p^{-it}$ for each prime number $p$. 

\noindent
Here we refer only to the case (C), and there 
\[\textrm{Ave}_\chi\Phi\Big(\frac{L'(s,\chi)}{L(s,\chi)}\Big) 
=\lim_{T\to\infty}\frac{1}{2T}\int_{-T}^T\Phi\Big(\frac{\zeta'}{\zeta}(s+it)\Big)dt.\]
If we take $\Phi$ as the characteristic function of a rectangle $R$, we obtain a result analogous to the integral formula \eqref{eq1.2} for $(\zeta'/\zeta)(s)$. 
In each cases, Ihara constructed $M_\sigma(z)$ and succeeded to prove the formula \eqref{eq1.3} for $\sigma>1$ or $\sigma>3/4$ if $K$ is a number field or a functional field, respectively.

Ihara also studied the Fourier transform $\tilde{M}_\sigma(w)$ defined by 
\[\tilde{M}_\sigma(w)=\int_\mathbb{C}M_\sigma(z)\psi_w(z)|dz|,\]
where 
\begin{equation}\label{eq1.4}
\psi_w(z)=\exp(i\langle z,w\rangle)
\end{equation}
is the additive character of $\mathbb{C}$ defined by using the inner product 
\[\langle z,w\rangle=\mathfrak{R}z\mathfrak{R}w+\mathfrak{I}z\mathfrak{I}w
=\mathfrak{R}(z\overline{w}).\] 
In the above three cases, Ihara noted that the function $\tilde{M}_\sigma(w)$ has an infinite product representation that comes from the Euler product of $L(s,\chi)$. 

The analogue of Ihara's results for $\log L(s,\chi)$ was proved by Ihara and Matsumoto \cite{IharaMatsumoto1, IharaMatsumoto2} in the case (A) and (C). 
Moreover, the condition for $\sigma$ was weakened to $\sigma>1/2$ unconditionally under  some revisions of the meanings of averages and restrictions for a class of test functions \cite{IharaMatsumoto1, IharaMatsumoto2, IharaMatsumoto4}. 
Today there are several variations of the results of Ihara--Matsumoto studied by Ihara--Matsumoto \cite{IharaMatsumoto3}, Lebacque--Zykin \cite{LebacqueZykin}, Matsumoto--Umegaki \cite{MatsumotoUmegaki1}, and Mourtada--Murty \cite{MourtadaMurty}, however, a generalization of the case (C) to an arbitrary number field is not known by a technical difficulty as well as the formula \eqref{eq1.2}. 
We solve such difficulties by the method of Guo \cite{Guo1} below. 

The functions $\mathcal{F}_\sigma$ from Bohr--Jessen and $M_\sigma$ from Ihara--Matsumoto in the case (C) are regarded as the density functions corresponding to the value-distribution of $\log\zeta(\sigma+it)$ or $(\zeta'/\zeta)(\sigma+it)$.
Such a density function had been also studied by Guo \cite{Guo1}, who proved the following. 
Write $\Phi(x+iy)=\phi(x,y)$, where $\phi:\mathbb{R}^2\to\mathbb{R}$ is an infinitely differentiable and compactly supported function. 
Let $\sigma_1$ be a large fixed positive real number, and let $\theta,\delta>0$ with $\delta+3\theta<1/2$. 
Then the following formula holds: 
\[\frac{1}{T}\int_0^T\Phi\Big(-\frac{\zeta'}{\zeta}(\sigma+it)\Big)dt
=\iint_{\mathbb{R}^2}\phi(u,v)\hat{\chi}(u,v)dudv+E\]
for any $\sigma\in[1/2+(\log T)^{-\theta},\sigma_1]$. 
Here $\chi(u,v)$ is the function 
\[\prod_p\int_0^1
\exp\Big(2\pi iu\log p\sum_{m=1}^\infty\frac{\cos(2\pi mt)}{p^{m\sigma}}
-2\pi iv\log p\sum_{m=1}^\infty\frac{\sin(2\pi mt)}{p^{m\sigma}}\Big)dt\]
and $\hat{\chi}$ denotes the Fourier transform of $\chi$. 
The error term $E$ is estimated as 
\[E\ll\exp\big(-\tfrac{1}{4}(\log T)^{\frac{2}{3}\theta}\big)
\int_A|\hat{\phi}(u,v)|dudv
+\int_{\mathbb{R}^2\backslash A}|\hat{\phi}(u,v)|dudv,\]
where $A$ is a square 
\[A=[-(\log T)^\delta,(\log T)^\delta]\times[-(\log T)^\delta,(\log T)^\delta].\]
Guo \cite{Guo2} also proved several results on the density of zeros of $\zeta'(s)$ as applications of his studies on the density function $\hat{\chi}(u,v)$. 

Guo's method is different from the ones of Bohr--Jessen and Ihara--Matsumoto in some aspects (see \S\S\ref{subsec2.3} below). 
By the differences, we may apply Guo's method to cases where the generalization has technical difficulty through the other two methods. 
In the present paper, we prove an analogue of Guo's result in the case of Dedekind zeta function associated with an arbitrary number field, mixed some notations from the papers of Ihara and Matsumoto. 
It seems difficult to prove such a result by the method of \cite{Matsumoto1, Matsumoto2, Matsumoto3}. 
Although our proof is largely same as Guo's original proof, sometimes we arrange the argument for a cost of the treatment of number fields. 

\vspace{3mm}
The paper consists of six sections. 
In Section \ref{sec1} as above, we discussed the previous works to the theory of the value-distributions that we consider. 
In Section \ref{sec2}, we states the results of the paper after setting up some notations. 
The main result is Theorem \ref{thm2.3} bellow, which is an analogue of the result in \cite{Guo1} for the Dedekind zeta functions.
This theorem is deduced from Propositions \ref{prop2.1} and \ref{prop2.2}. 
We also discuss differences between the methods of the present paper and of Bohr--Jessen or Ihara--Matsumoto. 
In Section \ref{sec3}, we prove Proposition \ref{prop2.1}. 
The method of the proof is almost the same as \cite{Guo1}, except for using some results on the density of the zeros of the Dedekind zeta functions. 
In Section \ref{sec4}, we prove Proposition \ref{prop2.2}. 
We prove this proposition by following the method of \cite{Guo1} again. 
At the final step in the proof, it is necessary to give a lower bound on a certain Dirichlet series supported on prime powers, which is more complicated in the case we consider than in \cite{Guo1}. 
To give an available lower bound, we use the Chebotarev density theorem, and this idea is  one of the most important parts at the present paper. 
Section \ref{sec5} is the reminder work to the proof of Theorem \ref{thm2.3}. 
This is just a simple Fourier analysis. 
In Section \ref{sec6}, we discuss further results obtained by the same method as  in the proof of Theorem \ref{thm2.3}. 
We have the analogue of Theorem \ref{thm2.3} for $L$-functions of a primitive Dirichlet character or a primitive cusp form.

\section{Statements of results}\label{sec2}
\subsection{Dedekind zeta functions}
Let $K$ be an algebraic number field of degree $d$.
The Dedekind zeta function of $K$ is defined by
\begin{equation}\label{eq2.1} 
\zeta_K(s)=\sum_\mathfrak{a}\frac{1}{N(\mathfrak{a})^s}
=\prod_\mathfrak{p}(1-N(\mathfrak{p})^{-s})^{-1},
\end{equation}
where $\mathfrak{a}$ and $\mathfrak{p}$ run through all non-zero ideals and prime ideals of $K$, respectively, and $N(\mathfrak{a})$ denotes the norm of $\mathfrak{a}$. 
The series and product converge absolutely for $\sigma>1$, and the function has a holomorphic continuation to the whole complex plane except for a simple pole at $s=1$.
Moreover, it satisfies the functional equation
\[\tilde{\zeta}_K(s)=\tilde{\zeta}_K(1-s),\]
where 
\[\tilde{\zeta}_K(s)
=|D_K|^{\frac{s}{2}}(\pi^{-\frac{s}{2}}\Gamma(\tfrac{s}{2}))^{r_1}
(2(2\pi)^{-s}\Gamma(s))^{r_2}\zeta_K(s).\]
Here $D_K$ denotes the discriminant of $K$, $r_1$ is the number of real places of $K$, and $r_2$ is the number of complex places of $K$.

For a rational prime number $p$, we write the prime ideal factorization of $p$ in $K$ as 
\[p=\mathfrak{p}_1^{e(\mathfrak{p}_1,p)}
\cdots\mathfrak{p}_{g(p)}^{e(\mathfrak{p}_{g(p)},p)}\]
with $N(\mathfrak{p}_j)=p^{f(\mathfrak{p}_j,p)}$ for $j=1, \ldots, g(p)$. 
Then the product in \eqref{eq2.1} is written as 
\[\zeta_K(s)=\prod_p\prod_{j=1}^{g(p)}(1-p^{-f(\mathfrak{p}_j,p)s})^{-1}.\]
Hence we have 
\begin{equation}\label{eq2.2}
\frac{\zeta'_K}{\zeta_K}(s)
=-\sum_{p}\sum_{j=1}^{g(p)}\sum_{k=1}^\infty
\frac{f(\mathfrak{p}_j,p)\log p}{p^{kf(\mathfrak{p}_j,p)s}}
=-\sum_{n=1}^\infty\frac{\Lambda_K(n)}{n^s}
\end{equation}
in the half-plane $\sigma>1$, where the function $\Lambda_K(n)$ is supported on prime powers and 
\begin{equation}\label{eq2.3}
\Lambda_K(p^m)
=\log p\sum_{\stackrel{1\leq j\leq{g(p)}}{f(\mathfrak{p}_j,p)|m}}f(\mathfrak{p}_j,p).
\end{equation}
Due to the fundamental equation  
\[e(\mathfrak{p}_1,p)f(\mathfrak{p}_1,p)+\cdots
+e(\mathfrak{p}_{g(p)},p)f(\mathfrak{p}_{g(p)},p)=d,\] 
the function $\Lambda_K(n)$ is bounded by $d\Lambda(n)$, where $\Lambda(n)=\Lambda_\mathbb{Q}(n)$ denotes the usual von Mangoldt function. 
This upper bound for $\Lambda_K(n)$ is an essential tool in the following study. 

\subsection{Notations and statements of results}\label{sec2.2}
Let 
\[D_\epsilon=\begin{cases}
\frac{1}{2} &\text{if}~ d=1,~2, \\
1-\frac{1}{d+\epsilon} &\text{if}~ d\geq3 \end{cases}\]
for any $\epsilon>0$, 
and let $\psi_w(z)$ denote the additive character as in \eqref{eq1.4}.
We define 
\[m_K(u,v,\sigma)=\prod_pm_{K,p}(u,v,\sigma)\]
for $\sigma>1/2$ and $u,v\in\mathbb{R}$, where
\begin{equation}\label{eq2.4}
m_{K,p}(u,v,\sigma)=\int_0^1\psi_w\Big(-\sum_{m=1}^\infty
\frac{\Lambda_K(p^m)}{p^{m\sigma}}e^{2\pi imt}\Big)dt
\end{equation}
for $w=u+iv$.
At first, we prove the following result using the function $m_K(u,v,\sigma)$.
\begin{prop}\label{prop2.1}
Let $\sigma_1$ be a large fixed positive real number.
Let $\theta,\delta>0$ with $\delta+3\theta<1/2$.
Then for each $\epsilon>0$, there exists $T_0=T_0(\theta,\delta,\sigma_1,\epsilon,K)>0$ such that for all $T\geq T_0$,  for all $\sigma\in[D_\epsilon+(\log T)^{-\theta},\sigma_1]$, and for all $w=u+iv$ whose absolute values are not greater than $(\log T)^\delta$, we have 
\begin{align}\label{eq2.5}
\frac{1}{T}\int_0^T\psi_w\Big(\frac{\zeta'_K}{\zeta_K}(\sigma+it)\Big)dt
=~&m_K(u,v,\sigma) \\
&+O_{K,\sigma_1}\Big(\exp\big(-\frac{1}{4}(\log T)^{\frac{2}{3}\theta}\big)\Big). \nonumber
\end{align}
The implied constant depends only on $K$ and $\sigma_1$. 
\end{prop}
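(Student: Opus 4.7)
The plan is to adapt Guo's approach in \cite{Guo1} to Dedekind zeta functions, replacing the von Mangoldt function by $\Lambda_K$ throughout and using zero-density estimates for $\zeta_K$ in place of those for $\zeta$. The argument splits naturally into three tasks.

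The first task, and the main obstacle, is to approximate $(\zeta'_K/\zeta_K)(\sigma+it)$ by the truncated Dirichlet polynomial $-\sum_{n\leq Y}\Lambda_K(n)n^{-\sigma-it}$ for a parameter $Y$ chosen as a small power of $\log T$, valid for all $t\in[0,T]$ outside an exceptional set $\mathcal{E}$ of controllably small measure. This is carried out by the usual contour-shift technique applied to a Perron-type representation of $\zeta'_K/\zeta_K$, with the contribution of the nontrivial zeros controlled by a zero-density estimate for $\zeta_K$. The definition of $D_\epsilon$ reflects precisely the range where such estimates are currently known to be effective: unconditionally one gets $D_\epsilon=1/2$ only for $d\leq 2$, where fourth-moment-type bounds suffice, while for $d\geq 3$ one must retreat to $\sigma>1-1/(d+\epsilon)$. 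A high-moment Chebyshev argument, fed with the same density estimate, gives $|\mathcal{E}|\leq T\exp(-c(\log T)^{2\theta/3})$, and the trivial bound $|\psi_w|\leq 1$ absorbs the contribution of $\mathcal{E}$ into the claimed error term. The Lipschitz inequality $|\psi_w(z_1)-\psi_w(z_2)|\leq|w|\,|z_1-z_2|$, applied to the truncation error off $\mathcal{E}$, is what forces the combined constraint $\delta+3\theta<1/2$ relating the growth of $|w|$ to the quality of the approximation.

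The second task is to evaluate the average of the truncated exponential. Since $\Lambda_K$ is supported on prime powers, the exponent splits as
\[-\sum_{n\leq Y}\frac{\Lambda_K(n)}{n^{\sigma+it}}=-\sum_{p\leq Y}\sum_{p^m\leq Y}\frac{\Lambda_K(p^m)}{p^{m\sigma}}e^{-imt\log p},\]
and $\psi_w$ converts this into a finite product of local factors $F_p(t)$ each depending only on $p^{-it}$. By the $\mathbb{Q}$-linear independence of $\{\log p:p\text{ prime}\}$ and an effective form of the Kronecker--Weyl equidistribution theorem, the time average $\frac{1}{T}\int_0^T\prod_{p\leq Y}F_p(t)\,dt$ equals $\prod_{p\leq Y}\int_0^1 F_p(\tau)\,d\tau=\prod_{p\leq Y}m_{K,p}(u,v,\sigma)$, with a discrepancy that is a negative power of $T$, hence negligible in the final bound.

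The third task is to complete the Euler product. The tail $\prod_{p>Y}m_{K,p}(u,v,\sigma)$ is shown to equal $1+o(1)$ by expanding the integrand in \eqref{eq2.4} via $|\psi_w(z)-1|\leq|w|\,|z|$ and using the uniform bound $\Lambda_K(p^m)\leq d\,\Lambda(p^m)$ together with the convergence of $\sum_p p^{-2\sigma}$ for $\sigma>1/2$. Combining the three tasks and optimising the free parameter $Y$ yields the identity \eqref{eq2.5} with an error of the required shape $\exp(-\tfrac{1}{4}(\log T)^{2\theta/3})$. The delicate point throughout is Step one: both the validity of the short-sum approximation and the quantitative smallness of $|\mathcal{E}|$ hinge on zero-density results for $\zeta_K$ that are uniform in $T$ and effective near the critical line, and it is precisely the present state of these density bounds that dictates the range $\sigma\geq D_\epsilon+(\log T)^{-\theta}$.
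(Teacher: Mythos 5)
Your three-task plan correctly identifies the main shape of the argument, but there are two genuine gaps and one structural deviation from what actually works.

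The structural deviation is in your Task~2. You propose to evaluate $\frac{1}{T}\int_0^T\prod_{p\leq Y}F_p(t)\,dt$ directly by an ``effective Kronecker--Weyl'' theorem, asserting a discrepancy that decays like a negative power of $T$. The truncation parameter here is not a power of $\log T$ but $x^2$ with $x=\exp\!\big((\log T)^{5\theta/3}\big)$, so the number of prime local factors is $\pi(x^2)\asymp \exp\!\big(2(\log T)^{5\theta/3}\big)\big/(\log T)^{5\theta/3}$, which grows much faster than any fixed power of $\log T$. Quantitative equidistribution of $\big(p^{-it}\big)_{p\leq x^2}$ with that many frequencies and a polynomial-in-$T$ error is not at all routine, and the paper deliberately avoids this. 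Instead it first shows (Lemma~\ref{lem3.6}) that the $[0,T]$-average may be replaced by the $[0,R]$-average for \emph{any} $R\geq T$, with an error controlled via Taylor expansion of $\psi_w$ and moment bounds for the Dirichlet polynomial (Ghosh's lemma, Lemma~\ref{lem3.7}). One then lets $R\to\infty$ and applies a \emph{qualitative} ergodic-average lemma of Heath--Brown (Lemma~3.10 in the paper). This two-step device is exactly what replaces your unjustified discrepancy estimate; without it, or without a proved effective equidistribution theorem, Task~2 has a hole.

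The second gap is in Task~3 and is elementary but fatal as stated. Bounding $|m_{K,p}(u,v,\sigma)-1|$ by $|w|\int_0^1\big|\sum_m\Lambda_K(p^m)p^{-m\sigma}e^{2\pi i m t}\big|\,dt\ll |w|\,d\log p/p^{\sigma}$ via $|\psi_w(z)-1|\leq|w||z|$ and then summing over $p>Y$ diverges whenever $\sigma\leq 1$, so it cannot show the tail is $1+o(1)$ for $\sigma$ near $1/2$. The correct argument (Lemma~\ref{lem3.11}) expands $m_{K,p}=1-\mu+R$ and uses that the first moments $\int_0^1 a_p^{(j)}(t)\,dt$ vanish, so that the deficit from $1$ is genuinely second order, of size $\ll(u^2+v^2)\,\log^2 p/p^{2\sigma}$; only this, not the naive first-order bound, is summable for $\sigma>1/2$. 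Finally, a small but not harmless slip: the truncation height $Y$ is $\exp\!\big(2(\log T)^{5\theta/3}\big)$, not ``a small power of $\log T$''; the latter would be far too short for the contour-shift approximation in Task~1 to succeed.
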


The main result of the present paper is stated by using the Fourier inverse transformation of $m_K(u,v,\sigma)$. 
Thus we next consider the estimates on $m_K(u,v,\sigma)$. 
\begin{prop}\label{prop2.2}
Let $\sigma_1$ be a large fixed positive real number, and suppose $\sigma\in(1/2,\sigma_1]$.
Then there exists $J_K(\sigma)>0$ such that for all $u,v\in\mathbb{R}$ with $|u|+|v|\geq J_K(\sigma)$ and for all non-negative integers $n$ and $m$, we have
\[\frac{\partial^{n+m}}{\partial u^n\partial v^m}m_{K}(u,v,\sigma)
\ll_{n,m}\exp\Big(-\frac{c_K(\sigma_1)}{2\sigma-1}(|u|+|v|)^{\frac{1}{\sigma}}
(\log(|u|+|v|))^{\frac{1}{\sigma}-1}\Big).\]
Here the constant $J_K(\sigma)$ depends only on $K$ and $\sigma$, and the positive  constant  $c_K(\sigma_1)$ depends only on $K$ and $\sigma_1$. 
The implied constant depends only on $n$ and $m$.
\end{prop}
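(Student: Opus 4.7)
My plan is to bound $m_K=\prod_p m_{K,p}$ and its derivatives by analysing each Euler factor and then multiplying. Introduce a truncation $P=P(w,\sigma)=(C_K|w|\log|w|)^{1/\sigma}$ chosen so that the local ``amplitude'' $A_p(w,\sigma):=|w|\Lambda_K(p)/p^\sigma$ satisfies $A_p\leq 1/2$ for every $p\geq P$, which is possible because $\Lambda_K(p)\leq d\log p$. For $p\geq P$ I Taylor-expand the exponential in \eqref{eq2.4}: writing $z_p(t)=-\sum_{m\geq 1}\Lambda_K(p^m)p^{-m\sigma}e^{2\pi imt}$ and using the orthogonality relation $\int_0^1 e^{2\pi imt}\,dt=\delta_{m,0}$, the linear coefficient in the expansion vanishes and the quadratic coefficient contributes $-A_p^2/4$, with the $m\geq 2$ terms and the cubic-and-higher remainder both of lower order once $p\geq P$. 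This yields $|m_{K,p}|\leq\exp(-cA_p^2)$ for some absolute constant $c>0$. For $p<P$ I retain only the trivial bound $|m_{K,p}|\leq 1$. Multiplying,
\[ |m_K(u,v,\sigma)|\leq\exp\Bigl(-c|w|^2\sum_{p\geq P}\Lambda_K(p)^2/p^{2\sigma}\Bigr). \]

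The crucial step is a lower bound on this Dirichlet series with the correct $\sigma$-dependence. Here I invoke the Chebotarev density theorem: let $L/\mathbb{Q}$ be the Galois closure of $K/\mathbb{Q}$; then primes splitting completely in $K$ form a set of natural density at least $1/[L:\mathbb{Q}]$, and for such primes $\Lambda_K(p)=d\log p$. Partial summation combined with the prime number theorem (or the effective Chebotarev density theorem) then gives
\[ \sum_{p\geq P}\Lambda_K(p)^2/p^{2\sigma}\gg_K\frac{\log P\cdot P^{1-2\sigma}}{2\sigma-1}, \]
and substituting $P=(C_K|w|\log|w|)^{1/\sigma}$ produces exactly the claimed rate $c_K(\sigma_1)(2\sigma-1)^{-1}|w|^{1/\sigma}(\log|w|)^{1/\sigma-1}$.

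For the derivatives I apply the generalised Leibniz rule to $\prod_p m_{K,p}$. Differentiating \eqref{eq2.4} under the integral yields $|\partial_u^{n_p}\partial_v^{m_p}m_{K,p}|\leq\bigl(\sum_k\Lambda_K(p^k)/p^{k\sigma}\bigr)^{n_p+m_p}\ll_d(\log p/p^\sigma)^{n_p+m_p}$. In each Leibniz term the $n+m$ derivatives are concentrated on a finite set $S$ of primes with $|S|\leq n+m$; the undifferentiated remainder $\prod_{p\notin S}|m_{K,p}|$ retains the main exponential decay, because removing the finitely many summands $A_p^2$ indexed by $S\cap[P,\infty)$ changes $\sum_{p\geq P}A_p^2$ by only a bounded amount (each such $A_p^2\leq 1/4$). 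Summing the derivative factors over all admissible partitions contributes a factor growing at most polynomially in $|w|^{1/\sigma}(\log|w|)^{1/\sigma-1}$, which is absorbed into a slightly reduced constant in the exponential once $|u|+|v|\geq J_K(\sigma)$.

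The main obstacle is the Chebotarev-based lower bound in the second paragraph. In Guo's original treatment of $K=\mathbb{Q}$ the identity $\Lambda(p)=\log p$ holds for every $p$, but for a general number field $\Lambda_K(p)$ can vanish on a positive-density set of primes (for instance inert primes in a quadratic field, or certain primes in non-Galois extensions). Isolating a positive-density subset of primes on which $\Lambda_K(p)\gg\log p$ is precisely what Chebotarev supplies, and tracking the factor $(2\sigma-1)^{-1}$ through the partial summation so as to reproduce the correct $\sigma\searrow 1/2$ behaviour in the proposition is a further technical point.
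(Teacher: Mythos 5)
The bound you obtain on $|m_K|$ itself is essentially the paper's argument: truncate at a threshold $P_0\asymp\bigl((|u|+|v|)\log(|u|+|v|)\bigr)^{1/\sigma}$, Taylor-expand the Euler factors for $p\geq P_0$ so that only the quadratic term survives, and use Chebotarev on the totally split primes to secure the lower bound on $\sum_{p\geq P_0}\Lambda_K(p)^2 p^{-2\sigma}$. Your treatment of the small primes via the trivial bound $|m_{K,p}|\leq 1$ is legitimate for real $u,v$ and is even slightly simpler than the paper's Lemma \ref{lem4.3}, which must allow complex $u,v$.

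The derivatives are where the routes diverge, and where your argument has a genuine gap. The paper (Lemma \ref{lem4.1}) first shows that $m_K$ extends to a function analytic in each of $u,v$ separately, then proves the bound for $(u,v)\in D(\alpha,1/2)\times D(\beta,1/2)$ with $\alpha,\beta$ real (Lemma \ref{lem4.2}), and finally deduces Proposition \ref{prop2.2} from Cauchy's integral formula; this eliminates the infinite Leibniz sum entirely. Your approach sums over all ways of distributing $n+m$ derivatives among the Euler factors, and bounds each factor by $|\partial_u^{n_p}\partial_v^{m_p}m_{K,p}|\ll_d(\log p/p^\sigma)^{n_p+m_p}$. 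That estimate is too weak: with $n_p+m_p=1$ (a single derivative on a single prime, which occurs for every $p$), the resulting sum $\sum_p(\log p)p^{-\sigma}\prod_{q\neq p}|m_{K,q}|$ is at best $\gg\sum_p(\log p)p^{-\sigma}$, which diverges for every $\sigma\leq 1$; the range of interest is $\sigma$ just above $1/2$. To salvage the Leibniz route one would need to observe that $\int_0^1 a_p^{(j)}(t)\,dt=0$, so that $\partial_u m_{K,p}=i\int_0^1 a_p^{(1)}(t)\bigl(e^{iua_p^{(1)}+iva_p^{(2)}}-1\bigr)\,dt\ll(|u|+|v|)(\log p/p^\sigma)^2$, making the single-derivative sum convergent for $\sigma>1/2$. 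As written, your proposal asserts the polynomial growth of the summed Leibniz terms without this cancellation, and the assertion is false under the stated bounds.

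A secondary point: if you did want to replicate the Cauchy approach, the trivial bound $|m_{K,p}|\leq 1$ no longer holds once $u,v$ are allowed to be complex, so the small primes must be controlled separately; this is precisely the content of the paper's Lemma \ref{lem4.3}, which shows $|m_K^{(1)}|\leq\exp\bigl(A_K(|\alpha|+|\beta|)^{3/(4\sigma)}\bigr)$, a bound that grows but is dominated by the decay coming from the tail.
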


\vspace{3mm}

To state the main result, we prepare the Fourier transformations and the class of test functions.
The measures $|dz|$ and $|dw|$ denote $(2\pi)^{-1}dxdy$ and $(2\pi)^{-1}dudv$, where $z=x+iy$ and $w=u+iv$, respectively. 
For an integrable function $f(z)$ on $\mathbb{C}$, we define the Fourier transformation $f^{\wedge}(w)$ and the Fourier inverse transformation $f^{\vee}(w)$ as
\[f^{\wedge}(w)=\int_\mathbb{C}f(z)\psi_w(z)|dz|\]
and
\[f^{\vee}(w)=\int_\mathbb{C}f(z)\psi_{-w}(z)|dz|.\]
We usually write $f^{\wedge}$ as $\tilde{f}$. 
Then, according to the definition in \cite{IharaMatsumoto2}, let $\Lambda$ be the set of all functions $f$ such that $f$ and $f^{\wedge}$ belong to $L^1\cap L^\infty$, and the formula $(f^{\wedge})^{\vee}=f$ holds.
We see that the Schwartz class on $\mathbb{C}$ is included in the class $\Lambda$, hence any infinitely differentiable and compactly supported function belongs to $\Lambda$. 

By Proposition \ref{prop2.2}, $m_K(u,v,\sigma)$ belongs to $\Lambda$ as a function of $w=u+iv$ for $u,~v\in\mathbb{R}$. 
Therefore, we define the function $M_{K,\sigma}(z)$ as the Fourier inverse transformation of $m_K(u,v,\sigma)$, that is 
\[M_{K,\sigma}(z)=\int_\mathbb{C}m_K(u,v,\sigma)\psi_{-z}(w)|dw|\]
for $\sigma>1/2$.
Moreover, we have also $\tilde{M}_{K,\sigma}(w)=m_K(u,v,\sigma)$ for $w=u+iv$. 

We finally state the main result of the paper. 
\begin{thm}\label{thm2.3}
Let $\sigma_1$ be a large fixed positive real number.
Let $\theta$ and $\delta$ be two positive real numbers with $\delta+3\theta<1/2$.
Then for each $\epsilon$, there exists a positive real number $T_0=T_0(\theta,\delta,\sigma_1,\epsilon,K)$ such that for all $T\geq T_0$ and for all $\sigma$ in the interval $[D_\epsilon+(\log T)^{-\theta},\sigma_1]$, we have 
\[\frac{1}{T}\int_0^T \Phi\Big(\frac{\zeta'_K}{\zeta_K}(\sigma+it)\Big)dt
=\int_{\mathbb{C}}\Phi(z)M_{K,\sigma}(z)|dz|+E\]
for any $\Phi\in\Lambda$, where $E$ is 
\[\ll_{K,\sigma_1}\exp\big(-\frac{1}{4}(\log T)^{\frac{2}{3}\theta}\big)
\int_{|w|\leq(\log T)^\delta}|\tilde{\Phi}(w)|~|dw|+\int_{|w|>(\log T)^\delta}|
\tilde{\Phi}(w)|~|dw|.\]
The implied constant depends only on $K$ and $\sigma_1$. 
\end{thm}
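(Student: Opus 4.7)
Since $\Phi\in\Lambda$, Fourier inversion gives $\Phi(z)=\int_{\mathbb{C}}\tilde{\Phi}(w)\psi_{-z}(w)\,|dw|$. Substituting $z=(\zeta_K'/\zeta_K)(\sigma+it)$ and interchanging the order of integration (justified by Fubini, since $\tilde{\Phi}\in L^1$ by the definition of $\Lambda$ and $|\psi_{-z}(w)|=1$), the left-hand side of the theorem becomes
\[
\int_{\mathbb{C}}\tilde{\Phi}(w)\left[\frac{1}{T}\int_0^T\psi_{-w}\Big(\frac{\zeta_K'}{\zeta_K}(\sigma+it)\Big)dt\right]|dw|.
\]
The plan is then to split the $w$-integral at $|w|=(\log T)^\delta$, apply Proposition \ref{prop2.1} on the inside and a trivial estimate on the outside, and recognize the resulting main term as $\int_{\mathbb{C}}\Phi(z)M_{K,\sigma}(z)|dz|$.

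For $|w|>(\log T)^\delta$ I bound the inner average trivially by $1$, producing a contribution $\ll\int_{|w|>(\log T)^\delta}|\tilde{\Phi}(w)|\,|dw|$, which matches the second piece of $E$. For $|w|\leq(\log T)^\delta$ one has $|u|,|v|\leq(\log T)^\delta$, so Proposition \ref{prop2.1} (applied with $-w$ in place of $w$, which lies in the same range) replaces the inner average by $m_K(-u,-v,\sigma)$ with error $O_{K,\sigma_1}(\exp(-\tfrac{1}{4}(\log T)^{2\theta/3}))$. The error contributes $\ll\exp(-\tfrac{1}{4}(\log T)^{2\theta/3})\int_{|w|\leq(\log T)^\delta}|\tilde{\Phi}(w)|\,|dw|$, matching the first piece of $E$. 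What remains is the main term
\[
\int_{|w|\leq(\log T)^\delta}\tilde{\Phi}(w)\,m_K(-u,-v,\sigma)\,|dw|.
\]

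A change of variables $w\mapsto -w$ rewrites this as $\int_{|w|\leq(\log T)^\delta}\tilde{\Phi}(-w)\,m_K(u,v,\sigma)\,|dw|$. Using the bound $|m_K(u,v,\sigma)|\leq 1$ (immediate from \eqref{eq2.4}), I may extend the integration to all of $\mathbb{C}$ at the cost of $\ll\int_{|w|>(\log T)^\delta}|\tilde{\Phi}(w)|\,|dw|$, which is reabsorbed into $E$. Proposition \ref{prop2.2} is not strictly needed for this step, but it guarantees that $m_K(\cdot,\cdot,\sigma)\in\Lambda$, so that $M_{K,\sigma}$ and the identification below are well defined. It then remains to verify the identity
\[
\int_{\mathbb{C}}\tilde{\Phi}(-w)\,m_K(u,v,\sigma)\,|dw|=\int_{\mathbb{C}}\Phi(z)\,M_{K,\sigma}(z)\,|dz|,
\]
which follows by expanding the right-hand side using the defining formula for $M_{K,\sigma}$, interchanging integrals by Fubini, and applying the symmetry $\psi_{-z}(w)=\psi_{-w}(z)$ together with $\int_{\mathbb{C}}\Phi(z)\psi_{-w}(z)|dz|=\tilde{\Phi}(-w)$.

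As the paper itself signals, this step is essentially routine Fourier analysis once Propositions \ref{prop2.1} and \ref{prop2.2} are available; there is no genuine obstacle. The only points requiring care are the two Fubini interchanges and the tail absorption, all of which go through because $\tilde{\Phi}\in L^1(\mathbb{C})$ (from $\Phi\in\Lambda$) and $|m_K|\leq 1$. The real difficulty of the theorem is concentrated in the two propositions already proved, not in this final assembly.
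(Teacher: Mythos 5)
Your proposal is correct and follows essentially the same route as the paper's proof in Section \ref{sec5}: Fourier inversion of $\Phi$, interchange of integrals, splitting the $w$-integral at $|w|=(\log T)^\delta$, applying Proposition \ref{prop2.1} inside and the trivial bound outside, extending to all of $\mathbb{C}$ using $|m_K|\leq 1$, and identifying the main term. The only (immaterial) difference is at the very end: the paper observes that $M_{K,\sigma}$ is real-valued (via $\tilde{M}_{K,\sigma}(-w)=\overline{\tilde{M}_{K,\sigma}(w)}$) and invokes Parseval, whereas you verify the identity $\int_{\mathbb{C}}\tilde{\Phi}(-w)\,m_K(u,v,\sigma)\,|dw|=\int_{\mathbb{C}}\Phi(z)M_{K,\sigma}(z)\,|dz|$ directly by a Fubini interchange and the symmetry $\psi_{-z}(w)=\psi_{-w}(z)$; both computations are valid and amount to the same thing.
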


\vspace{3mm}

\subsection{Remarks on the relationship to the other methods}\label{subsec2.3}
We first remark on the relation between results of this paper and results of Ihara--Matsumoto. 
Theorem \ref{thm2.3} looks generalization of Ihara's result in Section \ref{sec1} for the case (C). 
On the other hand, Ihara {\cite[see (1.4.3), (1.5.2), and (3.1.2)]{Ihara}} has introduced the function $\tilde{M}_\sigma^{(K)}(w)$ defined by 
\[\tilde{M}_\sigma^{(K)}(w)=\prod_\mathfrak{p}
\int_0^1\psi_w\Big(-\sum_{m=1}^\infty
\frac{\log N(\mathfrak{p})}{N(\mathfrak{p})^{m\sigma}}e^{2\pi imt}\Big)dt\]
for any global field $K$.
Then the formula \eqref{eq1.3} holds in the three cases described in Section \ref{sec1} for $M_\sigma^{(K)}(z)$ defined by 
\[M_\sigma^{(K)}(z)=\int_\mathbb{C}\tilde{M}_\sigma^{(K)}(w)\psi_{-z}(w)|dw|.\]
It is notable that if $K=\mathbb{Q}$, $M_{\sigma}^{(\mathbb{Q})}(z)$ is a common density function applied in both cases (A) and (C). 
Otherwise, Ihara noted that $M_\sigma^{(K)}(z)$, for which the formula \eqref{eq1.3} holds in the case (A) or (B), is no longer applied in the case (C) {\cite[\S 1.5]{Ihara}}. 
We can prove the function $\tilde{M}_{K,\sigma}(w)=m_K(u,v,\sigma)$ in Proposition \ref{prop2.1} agrees with $\tilde{M}_{\sigma}^{(K)}(w)$ for $K=\mathbb{Q}$, however, if $K\neq\mathbb{Q}$, these functions do not coincide. 
As a result, some arguments being valid in the method of Ihara--Matsumoto are impossible, for instance, we know fewer structure of the function $M_{K,\sigma}(z)$ than $\tilde{M}_{K,\sigma}(w)$.

We next consider the relation to generalizations of Bohr--Jessen type research \cite{Matsumoto3}. 
For any fixed positive integer $N$, we define 
\[\zeta_{K}(s;N)=\prod_{p\in P_N}\prod_{i=1}^{g(p)}(1-p^{-f(\mathfrak{p}_i,p)s})^{-1},\]
where $P_N$ denotes the set of rational primes until the $N$-th. 
Let $V_N(T,R)$ be the Lebesgue measure of the set of all $t\in[0,T]$ for which $\log\zeta_{K}(\sigma+it;N)$ belongs to $R$. 
In \cite{Matsumoto3}, the proof of the existence of the limit \eqref{eq1.1} start with showing the limit 
\[W_N(R)=\lim_{T\to\infty}\frac{1}{T}V_N(T,R)\]
exists. 
We extend $W_N$ to a probabilistic measure on $\mathbb{C}$, and let $\Lambda(w;N)$ be its Fourier transform. 
This is calculated as 
\[\Lambda(w;N)=\prod_{p\in P_N}\int_0^1\psi_w\Big(-\sum_{i=1}^{g(p)}
\log(1-p^{-f(\mathfrak{p}_i,p)\sigma}e^{2\pi if(\mathfrak{p}_i,p)t})\Big)dt.\] 
If the following estimate 
\[\Lambda(w;N)=O(|w|^{-(2+\eta)})\]
holds uniformly in $N$ for some $\eta>0$, we can write $W(R)$ as in \eqref{eq1.2} due to L\'{e}vy's inversion formula. 
In order to prove this estimate, the result of Jessen and Wintner {\cite[Theorem 10]{JessenWintner}} has been used, which we only apply to the case the Euler product is convex in the meaning in \cite{Matsumoto2}. 
This is why the generalization of the integral formula \eqref{eq1.2} is restricted. 
Now, except for the difference between $\log\zeta_K(s)$ and $(\zeta'_K/\zeta_K)(s)$, these functions $\Lambda(w)$ and $\Lambda(w;N)$ coincide with the function $\tilde{M}_{K,\sigma}(w)$ and $\tilde{M}_{K,\sigma}(w;N)$, respectively, where $\tilde{M}_{K,\sigma}(w;N)$ is defined by a suitable partial product of \eqref{eq2.1}.
In this paper, we consider only the estimate of $\tilde{M}_{K,\sigma}(w)$ without using any results of Jessen--Wintner. 
This estimate does not tell of $\tilde{M}_{K,\sigma}(w;N)$ at all. 

Recently, Matsumoto and Umegaki \cite{MatsumotoUmegaki2} considered an alternative method for the estimate of $\Lambda(w;N)$, and they succeeded to prove the limit formula \eqref{eq1.2} for automorphic $L$-functions. 
This new method may be applied to prove the limit formula \eqref{eq1.2} for various zeta or $L$-functions with non-convex Euler product. 
On the other hand, the generalization of Ihara-Matsumoto theorem remains to be considered.

\section{Proof of Proposition \ref{prop2.1}}\label{sec3}
In this section, we prove Proposition \ref{prop2.1}. 
At first, we approximate the logarithmic derivative $(\zeta'_K/\zeta_K)(s)$ by some Dirichlet polynomials and replace the left hand side of \eqref{eq2.5} with 
\begin{equation}\label{eq3.1}
\frac{1}{R}\int_0^R\psi_w\Big(-\sum_{p\leq x^2}
\sum_{m=1}^\infty\frac{\Lambda_K(p^m)}{p^{m(\sigma+it)}}\Big)dt
\end{equation}
under an acceptable error, where $R$ is any positive real number greater than $T$, and $x$ is a certain function of $T$ that tends to infinity as $T$ tends to infinity. 
Next, we consider the limit of the integral \eqref{eq3.1} as $R$ tends to infinity. 
This limit will be written as 
\begin{equation}\label{eq3.2}
\prod_{p\leq x^2}m_{K,p}(u,v,\sigma),
\end{equation}
where $m_{K,p}(u,v,\sigma)$ are functions in \eqref{eq2.4} and $w=u+iv$. 
Finally, we achieve the proof of Proposition \ref{prop2.1} by estimating the difference between the function \eqref{eq3.2} and $m_K(u,v,\sigma)$. 

\subsection{Approximation by Dirichlet polynomials}
In this subsection, we sometimes assume $\sigma>D_\epsilon$. 
We define 
\[\zeta_{K}(s;x)=\prod_{p\leq x^2}\prod_{j=1}^{g(p)}(1-p^{-f(\mathfrak{p}_j,p)s})^{-1}\]
for any $s\in\mathbb{C}$ and $x>0$. 
The goal of this subsection is the following lemma. 
\begin{lem}\label{lem3.1}
Let $\sigma_1$ be a large fixed positive real number.
Let $\theta,\delta>0$ with $\delta+\theta<1/2$.
Then for each $\epsilon>0$, there exists a positive real number $T_0=T_0(\theta,\delta,\sigma_1,\epsilon,K)$ such that for all $R\geq T\geq T_0$, for all $\sigma$ in the interval $[D_\epsilon+(\log T)^{-\theta},\sigma_1]$, and for all $|w|\leq(\log T)^\delta$, we have 
\begin{equation}\label{eq3.3}
\frac{1}{T}\int_0^T\psi_w\Big(\frac{\zeta'_K}{\zeta_K}(\sigma+it)\Big)dt
=\frac{1}{R}\int_0^R\psi_w\Big(\frac{\zeta'_{K}}{\zeta_{K}}(\sigma+it;x)\Big)dt+E,
\end{equation}
where 
\[E\ll_{K,\sigma_1}\exp\big(-\frac{1}{4}(\log T)^{\frac{2}{3}\theta}\big),\]
and $x=\exp((\log T)^{\frac{5}{3}\theta})$. 
The implied constant depends only on $K$ and $\sigma_1$. 
\end{lem}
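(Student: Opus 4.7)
The plan is to prove \eqref{eq3.3} in two steps: first replace $\zeta_K'/\zeta_K(\sigma+it)$ inside $\psi_w$ by the truncated Dirichlet series $\zeta_K'/\zeta_K(\sigma+it;x)$, keeping the integration over $[0,T]$; then pass from the average on $[0,T]$ to the average on $[0,R]$ of the truncated integrand. Throughout I will use $|\psi_w|\leq1$ and the Lipschitz bound $|\psi_w(a)-\psi_w(b)|\leq|w|\,|a-b|\leq(\log T)^{\delta}|a-b|$. The parameter $x=\exp((\log T)^{5\theta/3})$ is chosen so that both errors fit within $\exp(-\tfrac14(\log T)^{2\theta/3})$.

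For the first step, I would use a smoothed Perron-type argument. Pick a smooth weight $\phi$ supported in $[1/2,2]$ with $\phi(1)=1$, so that
\[
-\sum_{n}\frac{\Lambda_K(n)}{n^{\sigma+it}}\,\phi\!\left(\frac{n}{x^2}\right)
=-\frac{1}{2\pi i}\int_{(c)}\frac{\zeta_K'}{\zeta_K}(\sigma+it+s)\,\tilde\phi(s)\,x^{2s}\,ds,
\]
and shift the contour to $\Re s=-\eta$ with $\sigma-\eta>D_\epsilon$. Call $t\in[0,T]$ \emph{good} when $\zeta_K$ has no zero $\rho$ with $\Re\rho\geq\sigma-\tfrac12(\log T)^{-\theta}$ and $|\Im\rho-t|\leq(\log T)^{\theta}$. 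A log-free zero-density bound $N_K(\alpha,T)\ll_{K,\epsilon}T^{A_{K,\epsilon}(1-\alpha)+\epsilon}$ (valid for $\alpha\geq D_\epsilon$ with $A_{K,\epsilon}(1-D_\epsilon)<1$) shows that the bad set has measure $\ll T\exp(-c(\log T)^{2\theta/3})$. On the good set, the residue sum over zeros inside the shifted rectangle is negligible and the rapid decay of $\tilde\phi$ handles the horizontal contours; swapping the smooth cut-off for the sharp one $\zeta_K'/\zeta_K(\sigma+it;x)$ costs only the tail $\sum_{p\leq x^2,\,p^m>x^2}\Lambda_K(p^m)/p^{m\sigma}\ll x^{2(1-\sigma)}$, which is again admissible. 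Altogether $|\zeta_K'/\zeta_K(\sigma+it)-\zeta_K'/\zeta_K(\sigma+it;x)|\ll\exp(-c(\log T)^{2\theta/3})$ on the good set, and multiplying by $(\log T)^\delta$ keeps the contribution within the target.

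For the second step, write
\[
\psi_w\!\left(\frac{\zeta_K'}{\zeta_K}(\sigma+it;x)\right)=\prod_{p\leq x^2}\psi_w(-f_p(\sigma+it)),\qquad f_p(s)=\sum_{m\geq1}\frac{\Lambda_K(p^m)}{p^{ms}}.
\]
Each factor is a bounded smooth periodic function of $t$, and by the rational independence of $\{\log p:p\leq x^2\}$ both averages $\frac{1}{T}\int_0^T$ and $\frac{1}{R}\int_0^R$ of the product converge to the finite product $\prod_{p\leq x^2}m_{K,p}(u,v,\sigma)$. A standard mean-value computation based on expanding each $\psi_w(-f_p(\sigma+it))$ in its Fourier series and controlling the off-diagonal terms by orthogonality gives a quantitative rate of the form $O_x(T^{-1/2})$; because $x=\exp((\log T)^{5\theta/3})$ is sub-polynomial in $T$, the $x$-dependence in the implied constant is dominated by the power of $T$, and the difference between the two averages is therefore much smaller than $\exp(-\tfrac14(\log T)^{2\theta/3})$.

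The main obstacle is the zero-density input used in the first step. Guo's treatment for $\zeta(s)$ relies on the classical bound $N(\alpha,T)\ll T^{c(1-\alpha)}\log T$ uniformly for $\alpha\geq1/2$; for a general number field of degree $d\geq3$ the uniformly known log-free density estimates only cover $\alpha\geq 1-1/(d+\epsilon)=D_\epsilon$, which is precisely why the range of $\sigma$ in the conclusion shrinks to $[D_\epsilon+(\log T)^{-\theta},\sigma_1]$. Verifying that this weaker input still produces a bad-set measure $\ll T\exp(-c(\log T)^{2\theta/3})$ with implicit constants depending only on $K$ and $\sigma_1$ is the technical heart of the proof; the second step is, by contrast, essentially routine almost-periodicity.
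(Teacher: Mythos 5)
Your first step conflates two different pieces of the argument and, in doing so, omits the one that is genuinely hard. After the contour shift and the bad-set estimate, what you control pointwise on the good set is a \emph{smoothly weighted} Dirichlet polynomial (the paper's analogue is $-\sum_{n\le x^2}\Lambda_{K,x}(n)n^{-(\sigma+it)}$ in Lemma~\ref{lem3.2}), not the sharp truncation $\zeta_K'/\zeta_K(\sigma+it;x)$. The discrepancy between the two is a Dirichlet polynomial of length $\asymp x^2$ whose nonzero coefficients live near $n\approx x^2$; for $\sigma$ near $D_\epsilon$ its trivial size is $\asymp x^{2(1-\sigma)}$, and with $x=\exp((\log T)^{5\theta/3})$ this is enormous, not negligible. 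There is no admissible pointwise bound here: the paper gets the necessary cancellation in $t$ via Cauchy--Schwarz and Montgomery's mean-value theorem (Lemma~\ref{lem3.5}), which is where the crucial factor $x^{\frac{1}{2}-\sigma}$ comes from. Your quoted estimate $\sum_{p\le x^2,\,p^m>x^2}\Lambda_K(p^m)p^{-m\sigma}\ll x^{2(1-\sigma)}$ is really the paper's \emph{final} step (Lemma~\ref{lem3.8}), where the true pointwise bound is $\ll d(\log x)x^{1-2\sigma}/(\sigma-\tfrac{1}{2})$; your exponent is off by a whole factor of $x$, and as written the bound is inadmissible. (Also, a weight $\phi$ supported in $[1/2,2]$ has an entire Mellin transform, hence no pole at $s=0$, so your smoothed Perron formula cannot recover $\zeta_K'/\zeta_K(\sigma+it)$ as a residue; you presumably meant $\phi\equiv1$ near the origin and decaying past $1$.)

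Your second step is dismissed too quickly. Since $x$ depends on $T$, writing the error as ``$O_x(T^{-1/2})$'' is not a statement until the $x$-dependence is explicit, and here it is decisive: the product $\prod_{p\le x^2}$ of per-prime factors has frequencies $\sum_p k_p\log p$ that are nonzero but can be extremely small, so the near-diagonal contribution is not controlled by a naive ``orthogonality'' argument. The paper's Lemma~\ref{lem3.6} handles this by Taylor-expanding $\psi_w$ to $N$ terms, estimating the resulting $N$-th moments of the Dirichlet polynomial using Ghosh's lemma (Lemma~\ref{lem3.7}) for the diagonal and a quantitative off-diagonal bound from Guo, and then balancing $N$, $x$, $h$, and $|w|\le(\log T)^\delta$ against one another; the constraints in \eqref{eq3.23} and the eventual choice $\theta_1=5\theta/3$ (hence the hypothesis $\delta+3\theta<1/2$ in Proposition~\ref{prop2.1}) emerge precisely from this balancing. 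Both steps need real estimates in place of the heuristics before the lemma follows.
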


\vspace{3mm}

The proof of this lemma consists of four steps.
At first, we replace $(\zeta'_K/\zeta_K)(\sigma+it)$ in the left hand side of 
\eqref{eq3.3} with the Dirichlet polynomials 
\begin{equation}\label{eq3.4}
-\sum_{n\leq x^2}\frac{\Lambda_{K,x}(n)}{n^{\sigma+it}},
\end{equation}
where 
\[\Lambda_{K,x}(n)=
\begin{cases}
\Lambda_{K}(n)&{\rm if}~1\leq n\leq x,\\
\frac{\log(x^2/n)}{\log x}\Lambda_{K}(n)&{\rm if}~x\leq n\leq x^2.
\end{cases}\]
Next, we consider the differences between $\Lambda_K(n)$ and $\Lambda_{K,x}(n)$, and replace the Dirichlet polynomials \eqref{eq3.4} with  
\begin{equation}\label{eq3.5}
-\sum_{n\leq x^2}\frac{\Lambda_{K}(n)}{n^{\sigma+it}}.
\end{equation}
The third step looks different from the other ones. 
We replace the interval of the integrals $[0,T]$ with $[0,R]$ for any $R\geq T$.  
At last, we again replace the Dirichlet polynomials \eqref{eq3.5} with 
\[\frac{\zeta'_K}{\zeta_K}(\sigma+it;x)
=-\sum_{p\leq x^2}\sum_{m=1}^\infty\frac{\Lambda_{K}(p^m)}{p^{m(\sigma+it)}}.\]
Then the integral which we consider is the one in the right hand side of \eqref{eq3.3}.

\vspace{6mm}

\noindent{\it Step1}\hspace{3mm}
We prove the following lemma. 
\begin{lem}\label{lem3.2}
Let $\sigma_1$ be a large positive real number and $A>0$. 
Then for each $\epsilon>0$, there exist an absolute constant $T_0>0$ such that for all $T\geq T_0$, for all $\sigma\in(D_\epsilon,\sigma_1]$, and for all $|w|\leq A$, we have
\[\frac{1}{T}\int_0^T\psi_w\Big(\frac{\zeta'_K}{\zeta_K}(\sigma+it)\Big)dt
=\frac{1}{T}\int_0^T\psi_w\Big(-\sum_{n\leq x^2}
\frac{\Lambda_{K,x}(n)}{n^{\sigma+it}}\Big)dt+E_1,\]
where  
\begin{align*}
E_1\ll_{K,\sigma_1}~\frac{|w|}{\log x}&\Big(\frac{x}{T}+x^{-1-\sigma}
+\frac{x\log h\log T}{h}+\frac{x^{\frac{1}{2}(D_\epsilon-\sigma)}\log T}
{(\sigma-D_\epsilon)^2}\Big)\nonumber\\
&+hT^{-c(\sigma-D_\epsilon)}(\log T)^C+\frac{1}{T}, 
\end{align*}
and $h$ and $x$ are any functions of $T$ that tend to infinity with $T$.
The constants $c$ and $C$ are positive and depend only on $K$ and $\epsilon$, and the implied constant depends only on $K$ and $\sigma_1$.
\end{lem}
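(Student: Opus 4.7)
The plan is to exploit the $|w|$-Lipschitz property of the additive character $\psi_w$. Since $|\psi_w(z_1)-\psi_w(z_2)|\le|w|\,|z_1-z_2|$ is immediate from the definition in \eqref{eq1.4}, the difference of the two averages in the lemma is controlled by
\[\frac{|w|}{T}\int_0^T\Big|\frac{\zeta'_K}{\zeta_K}(\sigma+it)+\sum_{n\le x^2}\frac{\Lambda_{K,x}(n)}{n^{\sigma+it}}\Big|\,dt,\]
plus a boundary contribution of size $1/T$ arising from truncating the integral away from the pole of $\zeta_K$ at $s=1$ when $t$ is small. Thus the task reduces to estimating, in $L^1$-mean on $[0,T]$, the error in approximating $-\zeta'_K/\zeta_K$ by the smoothed Dirichlet polynomial $\sum_{n\le x^2}\Lambda_{K,x}(n)/n^s$.

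For such an identity I would apply the Mellin--Perron formula with kernel $(x^{2z}-x^z)/(z^2\log x)$, whose inverse Mellin transform equals $1$ on $[1,x]$ and tapers as $\log(x^2/n)/\log x$ on $[x,x^2]$; this is precisely the weight defining $\Lambda_{K,x}$. Shifting the contour to the left of $\operatorname{Re}(z)=0$ and collecting residues from the double pole at $z=0$, the simple pole of $\zeta_K$ at $z=1-s$, and the nontrivial zeros $\rho$ of $\zeta_K$, one arrives at
\[\sum_{n\le x^2}\frac{\Lambda_{K,x}(n)}{n^s}=-\frac{\zeta'_K}{\zeta_K}(s)+\frac{x^{2(1-s)}-x^{1-s}}{(1-s)^2\log x}-\frac{1}{\log x}\sum_\rho\frac{x^{2(\rho-s)}-x^{\rho-s}}{(\rho-s)^2}+O_K(x^{-1-\sigma}),\]
the last term absorbing the trivial zeros (and standard $\Gamma$-factor singularities from the functional equation).

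Substituting this identity into the Lipschitz bound and integrating in $t$ term by term, the $(1-s)$-pole contributes an $L^1$-mean of size $x^{2(1-\sigma)}/(T\log x)$, which yields $(|w|/\log x)(x/T)$ in $E_1$, while the tail past $n=x^2$ produces $(|w|/\log x)x^{-1-\sigma}$. For the sum over nontrivial zeros $\rho=\beta+i\gamma$ I would split by height: zeros with $|\gamma-t|>h$ are controlled through the quadratic decay $|\rho-s|^{-2}\ll(\gamma-t)^{-2}$ and summed by partial summation, while zeros with $|\gamma-t|\le h$ are handled by partitioning $[0,T]$ into intervals of length $h$ and noting that each zero lies in only $O(1)$ such intervals. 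After summing, the resulting quantity is recast as a Stieltjes integral against the zero-counting function $N_K(\alpha,T+h)$, and the zero-density estimate $N_K(\alpha,T)\ll T^{c(1-\alpha)}(\log T)^C$, valid for $\alpha\ge D_\epsilon$, is then brought to bear.

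The main obstacle is the delicate treatment of zeros close to the line $\operatorname{Re}(s)=\sigma$ when $\sigma$ is near $D_\epsilon$: one must balance the smoothing factor $x^{\beta-\sigma}$, the density bound, and the choice of $h$ so that the three zero-driven contributions $(|w|/\log x)(x\log h\log T/h)$, $(|w|/\log x)(x^{(D_\epsilon-\sigma)/2}\log T/(\sigma-D_\epsilon)^2)$, and $hT^{-c(\sigma-D_\epsilon)}(\log T)^C$ emerge with the correct dependence on $\sigma-D_\epsilon$, in particular the half-exponent in $x^{(D_\epsilon-\sigma)/2}$ coming from cutting the real-part range at $(\sigma+D_\epsilon)/2$ before invoking the density bound. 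The restriction $\sigma>D_\epsilon$ is thus tied directly to the strongest available unconditional zero-density theorem for $\zeta_K$: for $d\ge 3$, the value $D_\epsilon=1-1/(d+\epsilon)$ is exactly the level down to which such an estimate presently holds.
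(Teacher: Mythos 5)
Your overall architecture (the $|w|$-Lipschitz estimate for $\psi_w$, a Perron-type integral with kernel $(x^{z-s}-x^{2(z-s)})/(z-s)^2$ producing the smoothed coefficients $\Lambda_{K,x}$, splitting the sum over nontrivial zeros by height $|\gamma-t|\lessgtr h$ and by real part at $(\sigma+D_\epsilon)/2$, then invoking a zero-density theorem for $\alpha\geq D_\epsilon$) matches the paper's; the explicit formula you describe is the paper's Lemma \ref{lem3.3} and the density input is Lemma \ref{lem3.4}.

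However, your very first reduction contains a genuine gap. You apply the Lipschitz bound \emph{globally}, reducing $E_1$ to
\[
\frac{|w|}{T}\int_0^T\Big|\frac{\zeta'_K}{\zeta_K}(\sigma+it)+\sum_{n\le x^2}\frac{\Lambda_{K,x}(n)}{n^{\sigma+it}}\Big|\,dt,
\]
and then propose to estimate this $L^1$-mean term by term, handling the near-zeros by ``partitioning $[0,T]$ into intervals of length $h$.'' This cannot work: for zeros $\rho=\beta+i\gamma$ with $\beta$ close to $\sigma$ (and there is no hypothesis ruling these out in the range $\sigma\in(D_\epsilon,1]$), the $t$-integral of $|\rho-s|^{-2}$ near $t=\gamma$ is of size $\pi/(\sigma-\beta)$, which is unbounded as $\beta\to\sigma$ and infinite if some $\beta=\sigma$; counting the zeros via intervals of length $h$ does nothing to tame this. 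The paper's device is to \emph{not} pass to the $L^1$-mean of the Dirichlet-series error uniformly, but rather to excise the bad set $\mathscr{B}_h$ of $t\in[0,T]$ for which some zero with $\beta\geq\tfrac12(\sigma+D_\epsilon)$ lies within $h$ of $t$. On $\mathscr{B}_h^c$ the explicit-formula remainder admits a genuine pointwise bound (with the factor $(\sigma-D_\epsilon)^{-2}$), while on $\mathscr{B}_h$ one uses only the trivial bound $|\psi_w(z_1)-\psi_w(z_2)|\leq 2$, which converts the bad set into a contribution of size $\text{meas}(\mathscr{B}_h)/T\ll hT^{-c(\sigma-D_\epsilon)}(\log T)^C$ via the density theorem. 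This step—applying Lipschitz only on the good set and the bounded-modulus property of $\psi_w$ on the bad set—is essential; without it your $L^1$ integral is not even finite in general, and the term $hT^{-c(\sigma-D_\epsilon)}(\log T)^C$ in $E_1$ cannot be produced by your partitioning argument.
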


\vspace{3mm}

If $\sigma>1$, we can directly approximate $(\zeta'_K/\zeta_K)(s)$ from the Dirichlet series  \eqref{eq2.2}. 
If $\sigma\leq 1$, we connect $(\zeta'_K/\zeta_K)(s)$ to Dirichlet polynomials by supplementing terms that come from zeros and poles of zeta function. 
The following lemma is an analogue of the result of Selberg {\cite[Lemma 2]{Selberg1}}. 
\begin{lem}\label{lem3.3}
Let $x>1$. Then for any $s\in\mathbb{C}$ excluding zeros or the pole of $\zeta_K(s)$, we have
\begin{align}\label{eq3.6}
&\frac{\zeta'_K}{\zeta_K}(s)
=-\sum_{n\leq x^2}\frac{\Lambda_{K,x}(n)}{n^s}
-\frac{x^{1-s}-x^{2(1-s)}}{(1-s)^2\log x}\\
&+\frac{1}{\log x}\sum_{\substack{q:~\zeta_K(-q)=0\\\text{trivial}}}
\frac{x^{-q-s}-x^{2(-q-s)}}{(q+s)^2}
+\frac{1}{\log x}\sum_{\substack{\rho:~\zeta_K(\rho)=0\\\text{non-trivial}}}
\frac{x^{\rho-s}-x^{2(\rho-s)}}{(\rho-s)^2}.\nonumber
\end{align}
\end{lem}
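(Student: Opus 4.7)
I propose to prove Lemma \ref{lem3.3} by the classical Mellin--Perron contour technique, following the template that Selberg applied to the Riemann zeta function and transplanting it to $\zeta_K$.

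The starting point is to consider, for fixed $s$ away from the zeros and the pole of $\zeta_K$ and a real constant $c$ large enough that $\mathrm{Re}(s+z)>1$ on the vertical line $\mathrm{Re}(z)=c$, the contour integral
\[
I(s,x) = \frac{1}{2\pi i}\int_{(c)}\frac{\zeta_K'}{\zeta_K}(s+z)\,\frac{x^{2z}-x^{z}}{z^{2}\log x}\,dz.
\]
On this line the Dirichlet series $\zeta_K'/\zeta_K(s+z) = -\sum_n \Lambda_K(n) n^{-(s+z)}$ converges absolutely, so term-by-term integration reduces $I(s,x)$ to the Perron-type kernel $\frac{1}{2\pi i}\int_{(c)} y^z/z^2\,dz = \max(\log y,0)$ applied with $y=x^2/n$ and $y=x/n$. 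The difference of these two kernels, divided by $\log x$, is exactly the Selberg trapezoidal weight that cuts $\Lambda_{K,x}(n)$ out of $\Lambda_K(n)$, and consequently $I(s,x) = -\sum_{n\leq x^2}\Lambda_{K,x}(n)/n^s$, which is the first term on the right hand side of \eqref{eq3.6}.

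To obtain the remaining terms I would shift the contour to a far-left vertical line $\mathrm{Re}(z)=-U$, with $U\to\infty$ chosen so as to avoid the zeros of $\zeta_K$, and collect residues along the way. The integrand picks up: a contribution from the pole of $\zeta_K$ at $s+z = 1$, yielding residue $-(x^{2(1-s)}-x^{1-s})/((1-s)^{2}\log x)$; a contribution from the double pole at $z=0$, where the local expansion $x^{2z}-x^{z} = z\log x + O(z^{2})$ combined with $1/z^{2}$ produces the residue $\zeta_K'/\zeta_K(s)$; and a contribution from each trivial zero $z = -q - s$ and each non-trivial zero $z = \rho - s$ of $\zeta_K$, which produce precisely the two zero-sums written in \eqref{eq3.6}. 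Rearranging to isolate $\zeta_K'/\zeta_K(s)$ yields the stated identity with matching signs.

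The step I expect to be the main obstacle is justifying the contour shift, i.e.\ showing that the integrals on the horizontal connectors and on the far-left vertical line tend to zero as $U\to\infty$. This needs a bound of the shape $|\zeta_K'/\zeta_K(s+z)| \ll \log(2+|z|)$ on vertical lines of real part $\mathrm{Re}(z)\ll -1$ staying a bounded distance from the zeros, obtained by applying the functional equation to transfer $\zeta_K'/\zeta_K$ from the region of absolute convergence and combining this with Stirling's formula for the logarithmic derivatives of the Gamma factors contributed by the archimedean places. Once such a bound is in place, the kernel $(x^{2z}-x^{z})/(z^{2}\log x)$ decays like $|z|^{-2}$ on the shifted contour (because $|x^{z}|\leq 1$ there), which absorbs both the logarithmic growth of $\zeta_K'/\zeta_K$ and the known polynomial-in-height density of zeros in vertical strips, giving absolute convergence of the resulting sums over zeros and vanishing of the tail integrals. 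Letting $U\to\infty$ then delivers \eqref{eq3.6}.
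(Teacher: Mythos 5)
Your proposal is correct and follows essentially the same route as the paper: it is the Selberg–Perron contour argument with the trapezoidal kernel, differing from the paper's version only by the trivial change of variable $z \mapsto z+s$ and a factor of $-1/\log x$ in the normalization of the integrand. The paper likewise expands the truncated integral into the other three sides of a rectangle with left edge near $\mathrm{Re}(z)=-U$, $U=\lfloor T\rfloor+\tfrac12$ (to dodge trivial zeros), collects the same four families of residues, and justifies vanishing of the side integrals using $\zeta_K'/\zeta_K \ll_K \log|s|$ on the far-left line and $\zeta_K'/\zeta_K(\sigma\pm iT)\ll_K\log T$ on the horizontal connectors, citing Landau in place of the functional-equation-plus-Stirling derivation you outline.
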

\begin{proof}
Let $\alpha=\max\{2,1+\sigma\}$. 
We consider the integral 
\begin{equation}\label{eq3.7}
\frac{1}{2\pi i}\int_{\alpha-iT}^{\alpha+iT}\frac{x^{z-s}-x^{2(z-s)}}{(z-s)^2}
\frac{\zeta'_K}{\zeta_K}(z)dz.
\end{equation}
This integral converges to 
\begin{equation}\label{eq3.8}
\log x\sum_{n\leq x^2}\frac{\Lambda_{K,x}(n)}{n^s}
\end{equation}
as $T$ tends to infinity, due to \eqref{eq2.2} and the well known formula 
\[\frac{1}{2\pi i}\int_{c-i\infty}^{c+i\infty}\frac{y^z}{z^2}dz=
\begin{cases}
\log y&{\rm if}~y\geq1,\\
0&{\rm if}~0<y\leq1
\end{cases}\]
for any $c>0$.
We replace the vertical line of the integral in \eqref{eq3.7} by the other three sides of the rectangle with vertices at 
\[\alpha-iT,~~~\alpha+iT,~~~-U-iT,~~~-U+iT,\] 
where $U=\lfloor T\rfloor+\frac{1}{2}$. 
Here, as usual, the function $\lfloor x\rfloor$ denote the greatest integer less than or equal to $x$. 
We first consider the integrals 
\begin{equation}\label{eq3.9}
\frac{1}{2\pi i}\int_{-U\pm iT}^{-1\pm iT}\frac{x^{z-s}-x^{2(z-s)}}{(z-s)^2}
\frac{\zeta'_K}{\zeta_K}(z)dz
\end{equation}
and
\begin{equation}\label{eq3.10}
\frac{1}{2\pi i}\int_{-U-iT}^{-U+iT}\frac{x^{z-s}-x^{2(z-s)}}{(z-s)^2}
\frac{\zeta'_K}{\zeta_K}(z)dz.
\end{equation}
On these contours, we have 
\[\frac{\zeta'_K}{\zeta_K}(s)\ll_K\log|s|\]
if $T\geq1$ by {\cite[Satz 194]{Landau}}. 
Then we see the integrals \eqref{eq3.9} and \eqref{eq3.10} tend to zero as $T$ tends to infinity. 
The reminder work is estimating the integral 
\begin{equation}\label{eq3.11}
\frac{1}{2\pi i}\int_{-1\pm iT}^{\alpha\pm iT}\frac{x^{z-s}-x^{2(z-s)}}{(z-s)^2}
\frac{\zeta'_K}{\zeta_K}(z)dz.
\end{equation}
We also have 
\[\frac{\zeta'_K}{\zeta_K}(\sigma\pm iT)\ll_K\log T\]
on this horizontal contour if $T\geq2$ by {\cite[see (165), Satz 181, and Satz 182]{Landau}}, hence the integral \eqref{eq3.11} tends to zero as $T$ tends to infinity.

Therefore, the Dirichlet series \eqref{eq3.8} is equal to the sum of residues. 
The residue at $z=s$ is 
\[-\log x~\frac{\zeta'_K}{\zeta_K}(s); \]
the residue at $z=1$ is 
\[-\frac{x^{1-s}-x^{2(1-s)}}{(1-s)^2\log x}; \]
those at $z=-q$ and $z=\rho$ are 
\[\frac{x^{-q-s}-x^{2(-q-s)}}{(q+s)^2}
~~\text{and}~~
\frac{x^{\rho-s}-x^{2(\rho-s)}}{(\rho-s)^2},\]
respectively. Then the result follows. 
\end{proof}

In order to examine the term of non-trivial zeros in \eqref{eq3.6}, we prepare the following lemma.
\begin{lem}\label{lem3.4}
Let $N_K(\alpha,T)$ denote the number of zeros $\rho=\beta+i\gamma$ of $\zeta_K(s)$ with $\beta\geq\alpha$ and $0\leq\gamma\leq T$.
Then for any $\epsilon>0$, we have
\begin{equation}\label{eq3.12}
N_K(\alpha,T)\ll_KT^{1-c(\alpha-D_\epsilon)}(\log T)^C
\end{equation}
uniformly for $D_\epsilon\leq\alpha\leq1$, where $c$ and $C$ are positive constants that depend only on $K$ and $\epsilon$, and the implied constant depends only on $K$. 
\end{lem}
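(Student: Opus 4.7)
The statement is a zero-density estimate of Selberg type, and I would split the proof according to the degree $d$. When $d=1$ we have $\zeta_K=\zeta$ and $D_\epsilon=1/2$, so the bound is exactly Selberg's classical zero-density theorem for the Riemann zeta function. When $d=2$ one has a factorization $\zeta_K(s)=\zeta(s)L(s,\chi_K)$ with $\chi_K$ the quadratic Dirichlet character attached to $K/\mathbb{Q}$, so
\[
N_K(\alpha,T)\leq N(\alpha,T)+N(\alpha,T,\chi_K),
\]
and Selberg's estimate applied to each factor yields the bound with $D_\epsilon=1/2$.

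For the main case $d\geq 3$, where $D_\epsilon=1-1/(d+\epsilon)$, I would run the standard mollifier-and-large-values argument. Let $\mu_K(n)$ be the multiplicative function with generating series $\zeta_K(s)^{-1}$; from the Euler product one has $|\mu_K(n)|\leq r_K(n)$, where $r_K(n)$ is the number of integral ideals of $\mathcal{O}_K$ of norm $n$, and $r_K(n)\ll_\epsilon n^\epsilon$. For a truncation length $X=X(\alpha,T)$ to be chosen, form the mollifier $M_X(s)=\sum_{n\leq X}\mu_K(n)n^{-s}$, so that $\zeta_K(s)M_X(s)=1+\sum_{n>X}c_K(n)n^{-s}$. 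Combining this identity with an approximate functional equation for $\zeta_K$ produces, at every nontrivial zero $\rho=\beta+i\gamma$ of $\zeta_K$ with $\beta\geq\alpha$ and $0\leq\gamma\leq T$, a Dirichlet polynomial $P(s)$ of length $\ll T^\kappa$ satisfying $|P(\rho)|\gg 1$. Passing to a well-spaced subset of ordinates and invoking the Hal\'asz--Montgomery large-values inequality, one bounds $N_K(\alpha,T)$ in terms of a mean-square of $P$ on the line $\Re s=\alpha$.

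The crucial analytic input that fixes the threshold $D_\epsilon$ is the fourth-power moment
\[
\int_0^T|\zeta_K(\sigma+it)|^{4}\,dt\ll_K T^{1+2d(1-\sigma)+\epsilon},
\]
which follows from the convexity bound $\zeta_K(\sigma+it)\ll(|t|+2)^{d(1-\sigma)/2+\epsilon}$ (derived by Phragm\'en--Lindel\"of from the functional equation) together with a mean-square bound on the $\sigma=1$ line. For $\sigma$ just above $1-1/(d+\epsilon)$ this is essentially $T^{1+O(\epsilon)}$, which is precisely what allows the zero-density estimate to be pushed down to $D_\epsilon$. The main obstacle is the bookkeeping: choosing the mollifier length $X$ as a function of $\alpha$ and $T$ so that the Selberg-type exponent $1-c(\alpha-D_\epsilon)$ emerges uniformly across the full range $\alpha\in[D_\epsilon,1]$, and tracking the dependence of the constants $c,C$ on $K$ and $\epsilon$ only. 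The bound $\Lambda_K(n)\leq d\Lambda(n)$ noted after \eqref{eq2.3}, together with the divisor estimate $r_K(n)\ll_\epsilon n^\epsilon$, keeps all auxiliary sums under control, after which the argument proceeds as in the classical treatment for $\zeta$.
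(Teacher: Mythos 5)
Your case split by degree $d$ matches the paper exactly, and the $d=1$ case is identical (Selberg's zero-density theorem). For $d=2$ your factorization $\zeta_K=\zeta\cdot L(\cdot,\chi_K)$ is also what the paper does, but the estimate for the Dirichlet $L$-function factor is not ``Selberg's estimate applied to each factor'': Selberg proved the result for $\zeta$, and the analogous bound $N_\chi(\alpha,T)\ll T^{1-\frac14(\alpha-\frac12)}\log T$ for Dirichlet $L$-functions is due to Fujii, which is the reference the paper cites. This is a small but real citation gap.

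The substantive divergence is in the case $d\geq 3$. The paper does no work here at all: it simply quotes Heath--Brown's zero-density theorem $N_K(\alpha,T)\ll_K T^{(d+\epsilon)(1-\alpha)}(\log T)^C$ and rewrites the exponent as $1-(d+\epsilon)(\alpha-D_\epsilon)$, which gives \eqref{eq3.12} with $c=d+\epsilon$. You instead sketch a full re-derivation via a mollifier $M_X(s)=\sum_{n\le X}\mu_K(n)n^{-s}$, the Hal\'asz--Montgomery large-values inequality, a convexity bound, and a fourth-power moment for $\zeta_K$. The strategy is reasonable and is in the same family as the proofs of the quoted theorems, but it is a proof plan rather than a proof: the ``bookkeeping'' you defer is precisely where the uniform exponent $1-c(\alpha-D_\epsilon)$ across the whole range $\alpha\in[D_\epsilon,1]$ has to be extracted, and it is not automatic that the fourth moment you assert suffices to recover Heath--Brown's exponent $(d+\epsilon)(1-\alpha)$ (his argument is more delicate than the mollifier-plus-convexity skeleton). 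In short, what the paper's approach buys is brevity and certainty: all the number-theoretic content is outsourced to Selberg, Fujii, and Heath--Brown, and the lemma is a trivial repackaging of their exponents. Your approach, if completed, would be self-contained but duplicates a known and nontrivial theorem; for this paper's purposes, citing Heath--Brown is the right move.
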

\begin{proof}
If $K=\mathbb{Q}$, the Dedekind zeta function is just the Riemann zeta function. 
In this case, there is a classical result of Selberg \cite{Selberg2}, which is  
\[N_\mathbb{Q}(\alpha,T)\ll T^{1-\frac{1}{4}(\alpha-\frac{1}{2})}\log T\]
uniformly for $1/2\leq\alpha\leq1$. 

If $d=[K:\mathbb{Q}]\geq3$, we have Heath--Brown's result \cite{HeathBrown1}: 
for any $\epsilon>0$, there exists a positive constant $C=C(\epsilon,K)$ such that 
\[N_K(\alpha,T)\ll_K T^{(d+\epsilon)(1-\alpha)}(\log T)^C\]
uniformly for $1/2\leq\alpha\leq1$. 
The implied constant depends only on $K$. 
We again obtain the estimate \eqref{eq3.12}, since 
\[(d+\epsilon)(1-\alpha)=1-(d+\epsilon)\Big\{\alpha-\big(1-\frac{1}{d+\epsilon}
\big)\Big\}
=1-(d+\epsilon)(\alpha-D_\epsilon).\]

The case of quadratic fields remains. 
The Dedekind zeta function of any quadratic field is the product of the Riemann zeta function and a Dirichlet $L$-function attached to a quadratic character. 
Fujii \cite{Fujii} showed the estimate 
\[N_\chi(\alpha,T)\ll T^{1-\frac{1}{4}(\alpha-\frac{1}{2})}\log T\]
uniformly for $1/2\leq\alpha\leq1$. 
Here $N_\chi(\alpha,T)$ denotes the number of zeros $\rho=\beta+i\gamma$ of the Dirichlet $L$-function $L(s,\chi)$ with $\beta\geq\alpha$ and $0\leq\gamma\leq T$. 
Therefore, $N_K(\alpha,T)$ satisfies the same estimate as the case $K=\mathbb{Q}$. 
\end{proof}

\begin{proof}[Proof of Lemma \ref{lem3.2}]
We approximate $(\zeta'_K/\zeta_K)(s)$ with the first term of \eqref{eq3.6} by estimating the reminder terms. 
In order to avoid the pole at $s=1$, we assume $t\geq2$. 
Furthermore, we also assume $t\notin\mathscr{B}_h$ for the purpose of the investigation of non-trivial zeros, where $\mathscr{B}_h=\mathscr{B}_h(\sigma,T)$ is the set of all $t\in[0,T]$ for which there exists a zero $\rho=\beta+i\gamma$ of $\zeta_K(s)$ such that $\beta\geq\tfrac{1}{2}(\sigma+D_\epsilon)$ and $|\gamma-t|\leq h$. 
Here, $h$ is a function of $T$ that tends to infinity as $T$ tends to infinity. 

Before proceeding to the approximation of $(\zeta'_K/\zeta_K)(s)$, we estimate the measure of $\mathscr{B}_h$. 
We have  
\[\text{meas}~\mathscr{B}_h\leq hN_K(\tfrac{1}{2}(\sigma+D_\epsilon),T), \]
where ``meas $\mathscr{B}_h$'' means the Lebesgue measure of $\mathscr{B}_h$.
Therefore, we obtain 
\[\text{meas}~\mathscr{B}_h\ll_KhT^{1-\frac{c}{2}(\sigma-D_\epsilon)}(\log T)^C\]
uniformly for $D_\epsilon\leq\sigma\leq\sigma_1$ by Lemma \ref{lem3.4}. 

We come back to the approximation of $(\zeta'_K/\zeta_K)(s)$. 
Let $t\in\mathscr{B}_h^c\cap[2,T]$. 
Then the second term of \eqref{eq3.6} is estimated as 
\[\ll\frac{x}{t^2\log x}\]
for $\sigma\in(1/2,\sigma_1]$.
The third term is 
\[\ll\frac{dx^{-\sigma}}{t^2\log x}+\frac{dx^{-1-\sigma}}{\log x}\]
for $\sigma\in(1/2,\sigma_1]$, since any trivial zero is located at non-positive integer, whose multiplicity is at most $d$. 
For the last term, we divide the set of non-trivial zeros into two sets. 
One of them consists of zeros $\rho=\beta+i\gamma$ with $|\gamma-t|\geq h$. 
If $\rho$ belongs to such a set, we have
\[\left|\frac{1}{\log x}\sum_{\substack{\rho=\beta+i\gamma\\|\gamma-t|\geq h}}
\frac{x^{\rho-s}-x^{2(\rho-s)}}{(\rho-s)^2}\right|
\leq\frac{2x}{\log x}\sum_{\substack{\rho=\beta+i\gamma\\|\gamma-t|\geq h}}
\frac{1}{|\gamma-t|^2}
\ll\frac{dx\log h\log T}{h\log x}\]
for $\sigma\in(1/2,\sigma_1]$ by {\cite[Satz 180]{Landau}}.
If $\rho$ belongs to the set of remaining non-trivial zeros, we have $\beta<(\sigma+D_\epsilon)/2$. 
We divide the reminder sum further into those zeros with $|\gamma-t|\geq1$ and those without. 
For the first sum, 
\begin{align}\label{eq3.13}
\left|\frac{1}{\log x}\sum_{\substack{\rho=\beta+i\gamma\\1\leq|\gamma-t|<h}}
\frac{x^{\rho-s}-x^{2(\rho-s)}}{(\rho-s)^2}\right|
&\leq\frac{2x^{\frac{1}{2}(D_\epsilon-\sigma)}}{\log x}
\sum_{\substack{\rho=\beta+i\gamma\\1\leq|\gamma-t|<h}}
\frac{1}{|\gamma-t|^2}\\
&\ll_K\frac{x^{\frac{1}{2}(D_\epsilon-\sigma)}\log T}{\log x}, \nonumber
\end{align}
and for the second sum, 
\begin{align}\label{eq3.14}
\left|\frac{1}{\log x}\sum_{\substack{\rho=\beta+i\gamma\\|\gamma-t|<1}}
\frac{x^{\rho-s}-x^{2(\rho-s)}}{(\rho-s)^2}\right|
&\leq\frac{2x^{\frac{1}{2}(D_\epsilon-\sigma)}}{(\frac{\sigma-D_\epsilon}{2})^2\log x}
\sum_{\substack{\rho=\beta+i\gamma\\|\gamma-t|<1}}1\\
&\ll_K\frac{x^{\frac{1}{2}(D_\epsilon-\sigma)}\log T}{(\sigma-D_\epsilon)^2\log x}\nonumber. 
\end{align}
The last inequalities in \eqref{eq3.13} and \eqref{eq3.14} are deduced from {\cite[Satz 181]{Landau}}.

To complete the proof, we see that the difference 
\[\frac{1}{T}\int_0^T\psi_w\Big(\frac{\zeta'_K}{\zeta_K}(\sigma+it)\Big)dt
-\frac{1}{T}\int_0^T\psi_w\Big(-\sum_{n\leq x^2}
\frac{\Lambda_{K,x}(n)}{n^{\sigma+it}}\Big)dt\]
is equal to 
\[\frac{1}{T}\int_{\mathscr{B}^c_h\cap[2,T]}
\Big\{\psi_w\Big(\frac{\zeta'_K}{\zeta_K}(\sigma+it)\Big)
-\psi_w\Big(-\sum_{n\leq x^2}\frac{\Lambda_{K,x}(n)}{n^{\sigma+it}}\Big)\Big\}dt
+O\Big(\frac{\text{meas}~\mathscr{B}_h+2}{T}\Big). \]
By the fact $|\psi_w(z_1)-\psi_w(z_2)|\ll|w|~|z_1-z_2|$ and the above estimates, we have the desired result. 
\end{proof}

\vspace{3mm}

\noindent{\it Step2}\hspace{3mm}
The goal of this step is proving the following lemma. 
\begin{lem}\label{lem3.5}
Let $\sigma_1$ be a large positive real number and $A>0$. 
Then there exist an absolute constant $T_0>0$ such that for all $T\geq T_0$, for all $\sigma\in(1/2,\sigma_1]$, and for all $|w|\leq A$, we have 
\[\frac{1}{T}\int_0^T\psi_w\Big(-\sum_{n\leq x^2}
\frac{\Lambda_{K,x}(n)}{n^{\sigma+it}}\Big)dt
=\frac{1}{T}\int_0^T\psi_w\Big(-\sum_{n\leq x^2}
\frac{\Lambda_{K}(n)}{n^{\sigma+it}}\Big)dt+E_2, \]
where
\[E_2\ll\frac{d|w|\log x}{T^{\frac{1}{2}}(\sigma-\frac{1}{2})^{\frac{1}{2}}}
(T+x^2)^{\frac{1}{2}}x^{\frac{1}{2}-\sigma},\]
and $x$ is any function of $T$ that tends to infinity with $T$. 
The implied constant is absolute. 
\end{lem}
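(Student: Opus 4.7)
The plan is to bound the difference of the two integrals by the $L^1$-norm on $[0,T]$ of the Dirichlet polynomial
\[
D(t)=-\sum_{x<n\leq x^2}\frac{b_n}{n^{\sigma+it}},
\qquad
b_n=\Lambda_{K,x}(n)-\Lambda_K(n)=-\Lambda_K(n)\,\frac{\log(n/x)}{\log x}
\]
(with $b_n=0$ for $n\leq x$). Since $|\psi_w(z_1)-\psi_w(z_2)|\leq|w|\,|z_1-z_2|$, the difference of the integrals appearing in the lemma is bounded by $(|w|/T)\int_0^T|D(t)|\,dt$, so the task reduces to an $L^1$ estimate for a short Dirichlet polynomial supported on prime powers between $x$ and $x^2$.

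Next I would apply the Cauchy--Schwarz inequality to replace the $L^1$-norm by the $L^2$-norm, and then invoke a standard mean value theorem for Dirichlet polynomials (either Montgomery--Vaughan, or elementary term-by-term integration with the off-diagonal controlled by $\int_0^T(n/m)^{it}\,dt\ll 1/|\log(n/m)|$) to obtain
\[
\int_0^T|D(t)|^2\,dt\ll(T+x^2)\sum_{x<n\leq x^2}\frac{|b_n|^2}{n^{2\sigma}}.
\]
The arithmetic sum is estimated via the pointwise bound $|b_n|\leq\Lambda_K(n)\leq d\Lambda(n)$, together with $\Lambda_K(n)\leq d\log n\leq 2d\log x$ in the stated range, which give
\[
\sum_{x<n\leq x^2}\frac{|b_n|^2}{n^{2\sigma}}\leq 2d\log x\sum_{x<n\leq x^2}\frac{\Lambda_K(n)}{n^{2\sigma}}\leq 2d^2\log x\sum_{n>x}\frac{\Lambda(n)}{n^{2\sigma}}.
\]
A routine partial summation against Chebyshev's bound $\sum_{n\leq y}\Lambda(n)\ll y$ yields $\sum_{n>x}\Lambda(n)/n^{2\sigma}\ll x^{1-2\sigma}/(2\sigma-1)$, uniformly for $\sigma>1/2$.

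Combining everything produces
\[
\frac{|w|}{T}\int_0^T|D(t)|\,dt\ll\frac{d|w|(\log x)^{1/2}}{T^{1/2}(\sigma-\tfrac{1}{2})^{1/2}}(T+x^2)^{1/2}\,x^{1/2-\sigma},
\]
which is majorized by the bound claimed for $E_2$ (the factor $(\log x)^{1/2}$ is absorbed into $\log x$). The only delicate point is extracting the clean $(\sigma-1/2)^{-1/2}$ factor from the tail sum $\sum_{n>x}\Lambda(n)/n^{2\sigma}$, so that the estimate degrades gracefully as $\sigma\to 1/2^{+}$; otherwise the argument is a standard second moment computation and requires no deep input from the theory of number fields beyond the trivial bound $\Lambda_K\leq d\Lambda$.
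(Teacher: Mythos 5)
Your proposal is correct and follows essentially the same route as the paper: Lipschitz bound on $\psi_w$, Cauchy--Schwarz to pass from $L^1$ to $L^2$, the Montgomery mean value theorem for Dirichlet polynomials, and the trivial bound $\Lambda_K\leq d\Lambda$. Your handling of the coefficients $b_n=\Lambda_{K,x}(n)-\Lambda_K(n)$ is in fact slightly more careful than the paper's (which informally replaces $|b_n|$ by $\Lambda_K(n)$ inside the absolute value of the Dirichlet polynomial before squaring), and your estimate of the arithmetic sum yields a marginally better $(\log x)^{1/2}$ than the stated $\log x$; both of these are harmless refinements, not a different method.
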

\begin{proof}
By using the estimate $|\psi_w(z_1)-\psi_w(z_2)|\ll|w|~|z_1-z_2|$, the difference 
\[\frac{1}{T}\int_0^T\psi_w\Big(-\sum_{n\leq x^2}
\frac{\Lambda_{K,x}(n)}{n^{\sigma+it}}\Big)dt
-\frac{1}{T}\int_0^T\psi_w\Big(-\sum_{n\leq x^2}
\frac{\Lambda_{K}(n)}{n^{\sigma+it}}\Big)dt.\]
is estimated as 
\begin{align}\label{eq3.15}
\ll\frac{|w|}{T}\int_0^T \Big|\sum_{n\leq x^2}
\frac{\Lambda_{K,x}(n)-\Lambda_{K}(n)}{n^{\sigma+it}}\Big|dt
&\leq\frac{|w|}{T}\int_0^T \Big|\sum_{x\leq n\leq x^2}
\frac{\Lambda_K(n)}{n^{\sigma+it}}\Big|dt \nonumber\\
&\leq\frac{|w|}{T^{\frac{1}{2}}}\Big\{\int_0^T \Big|\sum_{x\leq n\leq x^2}
\frac{\Lambda_K(n)}{n^{\sigma+it}}\Big|^2dt\Big\}^{\frac{1}{2}}.
\end{align}
The last inequality is due to Cauchy's inequality. 
Then we apply the following result of {\cite[Theorem 6.1]{Montgomery}}: for any real $T_0$ and $T$, we have 
\begin{equation}\label{eq3.16}
\int_{T_0}^{T_0+T}\Big|\sum_{n=1}^Na_nn^{-it}\Big|^2dt
=\Big(T+\theta\frac{4\pi}{\sqrt3}N\Big)\sum_{n=1}^N|a_n|^2,
\end{equation}
where $-1\leq\theta\leq1$.
Hence \eqref{eq3.15} is 
\[\ll\frac{d|w|\log x}{T^{\frac{1}{2}}(\sigma-\frac{1}{2})^{\frac{1}{2}}}
(T+x^2)^{\frac{1}{2}}x^{\frac{1}{2}-\sigma},\]
here we use the bound $\Lambda_K(n)\leq d\Lambda(n) \leq d\log n$. 
This proves Lemma \ref{lem3.5}. 
\end{proof}

\vspace{3mm}

\noindent{\it Step3}\hspace{3mm}
At the third step, we prove the following lemma. 
\begin{lem}\label{lem3.6}
Let $\sigma_1$ be a large fixed positive real number and $A>0$. 
Then there exists an absolute constant $T_0>0$ such that for all $R\geq T\geq T_0$, for all $\sigma\in(1/2,\sigma_1]$, and for all $|w|\leq A$, 
we have
\[\frac{1}{T}\int_0^T\psi_w\Big(-\sum_{n\leq x^2}
\frac{\Lambda_{K}(n)}{n^{\sigma+it}}\Big)dt
=\frac{1}{R}\int_0^R\psi_w\Big(-\sum_{n\leq x^2}
\frac{\Lambda_{K}(n)}{n^{\sigma+it}}\Big)dt+E_3,\]
where 
\begin{align*}
E_3\ll&\frac{d^Nx^{5N}}{T}(1+|w|^2)^{\frac{N}{2}}\\
&+\frac{(8d|w|)^N}{N!}\Big(1+\frac{x^N}{T}\Big)
\{(\zeta(2\sigma)^{\frac{1}{2}}\log x)^N(\tfrac{N}{2})!+\zeta'(2\sigma)^N\},
\end{align*}
and $x$ and $N$ are functions that grows with $T$, $N$ being an even integer. 
The implied constant is absolute.
\end{lem}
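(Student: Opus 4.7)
The plan is to compare the two averages by Taylor-expanding $\psi_w$ in its argument and handling the polynomial part by frequency separation and the remainder by the mean-value formula \eqref{eq3.16}. Let $P(s)=\sum_{n\le x^2}\Lambda_K(n)/n^s$, so that $\psi_w(-P(\sigma+it))=e^{i\theta(t)}$ with the real phase $\theta(t)=-\langle P(\sigma+it),w\rangle$. For the given even integer $N$, use the truncation
\begin{equation*}
\psi_w\bigl(-P(\sigma+it)\bigr)=\sum_{k=0}^{N-1}\frac{i^k\theta(t)^k}{k!}+R_N(t),\qquad |R_N(t)|\le\frac{|w|^N|P(\sigma+it)|^N}{N!}.
\end{equation*}
Subtracting the $(1/R)\int_0^R$- and $(1/T)\int_0^T$-expansions term by term, the polynomial part contributes the off-diagonal error and $R_N$ contributes the tail error.

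For the polynomial part, expand $\theta(t)=-\tfrac{u}{2}(P+\bar P)-\tfrac{v}{2i}(P-\bar P)$ and apply the binomial/multinomial theorem to write $\theta(t)^k$ as a linear combination, with total $\ell^1$ weight at most $(|u|+|v|)^k$, of products $P(\sigma+it)^a\overline{P(\sigma+it)}^{\,b}$ with $a+b=k$. Each such product is a finite Dirichlet polynomial $\sum_{m_1\le x^{2a},\,m_2\le x^{2b}}\beta_{m_1,m_2}(m_2/m_1)^{it}$ with non-negative coefficients $\beta_{m_1,m_2}$ built from $\Lambda_K$. The key observation is that
\begin{equation*}
\frac{1}{T}\int_0^T(m_2/m_1)^{it}\,dt-\frac{1}{R}\int_0^R(m_2/m_1)^{it}\,dt
\end{equation*}
vanishes on the diagonal $m_1=m_2$ and is bounded by $O(\max(m_1,m_2)/T)=O(x^{2k}/T)$ off the diagonal, using $|\log(m_1/m_2)|\gg 1/\max(m_1,m_2)$ for $m_1\neq m_2$. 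Summing $|\beta_{m_1,m_2}|$ over $(m_1,m_2)$, inserting the crude $\ell^1$ bound $\|P\|_1\le d\sum_{n\le x^2}\Lambda(n)/n^\sigma\ll dx$ valid for $\sigma>1/2$, and collecting factors across $k<N$ via $|w|^k\le(1+|w|^2)^{N/2}$ absorbs into the first summand $d^Nx^{5N}(1+|w|^2)^{N/2}/T$ of $E_3$ after routine bookkeeping.

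The remainder contribution is, by applying $|R_N(t)|\le|w|^N|P(\sigma+it)|^N/N!$ to both averages,
\begin{equation*}
\frac{|w|^N}{N!}\cdot\frac{1}{T}\int_0^T|P(\sigma+it)|^N\,dt.
\end{equation*}
Since $N$ is even, $|P(\sigma+it)|^N=|P(\sigma+it)^{N/2}|^2$, and the Dirichlet polynomial $P(\sigma+it)^{N/2}=\sum_m\gamma_mm^{-it}$ has length at most $x^N$ with $\gamma_m=\sum_{n_1\cdots n_{N/2}=m}\prod\Lambda_K(n_j)/n_j^\sigma\ge0$. Applying \eqref{eq3.16} yields
\begin{equation*}
\frac{1}{T}\int_0^T|P(\sigma+it)|^N\,dt\ll\Bigl(1+\frac{x^N}{T}\Bigr)\sum_m|\gamma_m|^2.
\end{equation*}
The main obstacle is the estimation of $\sum_m|\gamma_m|^2$: using $\Lambda_K\le d\Lambda$ reduces to the rational case, and the contributing tuples $(n_i),(n'_j)$ with $\prod n_i=\prod n'_j$ split into a permutation-diagonal part, whose contribution is $(N/2)!(\sum_n\Lambda(n)^2/n^{2\sigma})^{N/2}\ll(N/2)!(\log x\cdot\zeta(2\sigma)^{1/2})^N$ after using $\sum_{n\le x^2}\Lambda(n)^2/n^{2\sigma}\ll\log x\cdot\zeta(2\sigma)$, and a prime-collision part controlled by $\zeta'(2\sigma)^N$ via standard logarithmic-derivative bookkeeping. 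This delivers
\begin{equation*}
\sum_m|\gamma_m|^2\ll(cd)^N\bigl\{(N/2)!(\zeta(2\sigma)^{1/2}\log x)^N+\zeta'(2\sigma)^N\bigr\}.
\end{equation*}
Multiplying by $|w|^N/N!$ matches the second summand of $E_3$, and assembling the two contributions completes the proof. The delicate step is the combinatorial split yielding the two-term shape, reflecting that the naive bound $(\sum\Lambda_K(n)/n^\sigma)^N$ is unavailable in the range $\sigma\le 1$.
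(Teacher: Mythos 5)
Your proposal follows the same overall architecture as the paper's proof: Taylor-expand $\psi_w$ to order $N$, convert $\langle F_K,w\rangle^k$ into a bilinear combination of $F_K^a\overline{F_K}^b$, treat the polynomial part by the off-diagonal trick ($|\log(m_1/m_2)|^{-1}\ll\max(m_1,m_2)$ for $m_1\neq m_2$), and handle the Taylor remainder by the $N$-th moment of $F_K$ via the mean-value theorem \eqref{eq3.16}. Up to this point the two proofs essentially coincide.

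The genuine gap is in the moment bound. You apply \eqref{eq3.16} directly to the Dirichlet polynomial $P^{N/2}$ and then have to estimate
\[
\sum_m|\gamma_m|^2
=\sum_{\substack{n_1\cdots n_{N/2}=n'_1\cdots n'_{N/2}\\ n_i,\,n'_j\ \text{prime powers}\le x^2}}
\prod_i\frac{\Lambda_K(n_i)}{n_i^\sigma}\prod_j\frac{\Lambda_K(n'_j)}{(n'_j)^\sigma}.
\]
Your split of the contributing tuples into a ``permutation-diagonal part'' and a ``prime-collision part controlled by $\zeta'(2\sigma)^N$ via standard logarithmic-derivative bookkeeping'' is exactly where the work is, and it is not carried out. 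Once prime powers with exponent $\ge 2$ are allowed, the equality of products no longer forces the two multisets to be permutations of each other (e.g.\ $p\cdot p$ on one side and $p^2$ on the other), and the resulting combinatorics of exponent repartitions across primes are not trivial; a crude count does not obviously produce the additive shape $(N/2)!(\zeta(2\sigma)^{1/2}\log x)^N+\zeta'(2\sigma)^N$. The paper sidesteps this entirely by splitting $F_K=P_1+P_2$ \emph{before} applying \eqref{eq3.16}: the tail $P_2=\sum_{p^a,a\ge2}\Lambda_K(p^a)p^{-a(\sigma+it)}$ is bounded in sup-norm by $4d|\zeta'(2\sigma)|$, which immediately furnishes the $\zeta'(2\sigma)^N$ term, and \eqref{eq3.16} is applied only to the prime sum $P_1^{N/2}$, whose coefficient sum is then evaluated by Ghosh's combinatorial lemma (Lemma~\ref{lem3.7}), yielding $(N/2)!(\zeta(2\sigma)^{1/2}\log x)^N$. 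You never invoke Lemma~\ref{lem3.7} or any substitute, and without either the pre-split or Ghosh's lemma the claimed two-term bound is unsupported. (There is also a slip in the intermediate estimate: you need $\sum_{n\le x^2}\Lambda(n)^2/n^{2\sigma}\ll(\log x)^2\zeta(2\sigma)$, not $\log x\cdot\zeta(2\sigma)$, to arrive at $(\zeta(2\sigma)^{1/2}\log x)^N$ after raising to the power $N/2$.) To make the proposal complete, either introduce the $P_1+P_2$ split before applying \eqref{eq3.16} as the paper does, or replace ``standard bookkeeping'' with an explicit combinatorial argument handling the prime-power collisions.
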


We use the following result in the proof. 
\begin{lem}[\cite{Ghosh}]\label{lem3.7}
Let $p_i$ and $q_j$ denote prime numbers and set $y\geq y_0$.
Let $\tau$ be a positive number, and suppose that the $\delta(n)$ are complex numbers satisfying $|\delta(n)|\leq c$ for some fixed constant $c>0$. 
Then for any integer $k\geq1$, 
\[\sum_{\substack{p_1,\ldots,p_k<y\\q_1,\ldots,q_k<y\\
p_1\cdots p_k=q_1\cdots q_k}}
\frac{\delta(p_1)\cdots\delta(p_k)\delta(q_1)\cdots\delta(q_k)}{(p_1\cdots p_k)^\tau}\]
is equal to 
\[k!\Big(\sum_{p<y}\frac{\delta^2(p)}{p^{2\tau}}\Big)^k
+O\Big(c^{2k}k!\Big(\sum_{p<y}p^{-2\tau}\Big)^{k-2}
\Big(\sum_{p<y}p^{-4\tau}\Big)\Big).\]
The implied constant is absolute. 
\end{lem}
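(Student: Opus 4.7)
The plan is to exploit unique factorization: the constraint $p_1\cdots p_k=q_1\cdots q_k$ forces the multisets $\{p_1,\ldots,p_k\}$ and $\{q_1,\ldots,q_k\}$ to be equal. Let $S_k$ denote the sum in question, and decompose $S_k=S_k^{\mathrm{diag}}+S_k^{\mathrm{off}}$, where $S_k^{\mathrm{diag}}$ collects those terms with $p_1,\ldots,p_k$ pairwise distinct (so necessarily the $q_j$ are as well) and $S_k^{\mathrm{off}}$ collects the rest. This decomposition will be matched against a similar decomposition of the claimed main term $k!\bigl(\sum_p\delta(p)^2/p^{2\tau}\bigr)^k$, itself expanded as a $k$-fold sum over all ordered tuples $(p_1,\ldots,p_k)$.

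For the diagonal piece, when $(p_1,\ldots,p_k)$ has pairwise distinct entries, unique factorization forces $(q_1,\ldots,q_k)$ to be one of the $k!$ permutations of $(p_1,\ldots,p_k)$, and for each such permutation $\delta(q_1)\cdots\delta(q_k)=\delta(p_1)\cdots\delta(p_k)$. Consequently
\[S_k^{\mathrm{diag}} = k!\sum_{\substack{p_1,\ldots,p_k<y\\ \text{pairwise distinct}}}\prod_{i=1}^k\frac{\delta(p_i)^2}{p_i^{2\tau}}.\]
The expansion of the claimed main term runs over \emph{all} tuples $(p_1,\ldots,p_k)$, so the discrepancy between the main term and $S_k^{\mathrm{diag}}$ equals $k!$ times the sum over tuples with at least one coincidence. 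To bound this I would select an unordered pair $\{i,j\}\subseteq\{1,\ldots,k\}$ that is forced to agree (at most $\binom{k}{2}$ choices); that coordinate then contributes $\sum_p 1/p^{4\tau}$, while the remaining $k-2$ free indices contribute at most $(\sum_p 1/p^{2\tau})^{k-2}$. Combined with $|\delta|\leq c$, this fits inside the claimed error.

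For the off-diagonal piece $S_k^{\mathrm{off}}$, a coincidence among the $p_i$'s forces a coincidence inside the common multiset. I would again fix an unordered pair $\{i,j\}$ of indices that must agree in $\mathbf{p}$, apply $|\delta|\leq c$ to all $2k$ factors, and estimate the sum over the remaining coordinates of $\mathbf{p}$ trivially by $(\sum_p 1/p^{2\tau})^{k-2}(\sum_p 1/p^{4\tau})$; the number of tuples $\mathbf{q}$ compatible with a given multiset of $\mathbf{p}$ is controlled by the multinomial coefficient $k!/\prod m_i!\leq k!$, which is absorbed directly into the $k!$ appearing in the stated error. The principal obstacle is bookkeeping: the factors $\binom{k}{2}$, the multinomial coefficients $k!/\prod m_i!$ counting the $\mathbf{q}$-matchings, and the symmetrization factor $k!$ must all be absorbed into the single $k!$ in the final error bound with an \emph{absolute} implied constant, independent of $k$, $\tau$, $y$, and the sequence $\delta$. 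This is handled most cleanly by refusing to refine the analysis below the ``first coincident pair'' level, so that only one factor of $k!$ and at most one factor of $\binom{k}{2}$ (absorbable into the $k!$) arises on either side of the decomposition.
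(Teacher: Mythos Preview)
The paper does not prove this lemma at all: it is quoted verbatim from Ghosh \cite{Ghosh} and invoked as a black box in the proof of Lemma~\ref{lem3.6}. There is therefore no ``paper's proof'' to compare against; your sketch is the standard combinatorial argument one would give for such a statement, and the decomposition into the distinct-tuple diagonal and the repeated-tuple remainder is exactly the right structure.

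That said, there is a genuine gap in your final paragraph. You assert that the factor $\binom{k}{2}$ arising from the choice of a coincident pair is ``absorbable into the $k!$'' with an absolute implied constant. It is not: $\binom{k}{2}k!/k!=\binom{k}{2}\to\infty$ with $k$. In fact the $\binom{k}{2}$ is essentially sharp for the quantity
\[
\sum_{\substack{(p_1,\ldots,p_k)\\ \text{not all distinct}}}\prod_{i=1}^k p_i^{-2\tau},
\]
as one checks by taking many primes of comparable weight. So your argument, carried through honestly, yields an error of size
\[
O\!\left(\binom{k}{2}\,c^{2k}k!\Big(\sum_{p<y}p^{-2\tau}\Big)^{k-2}\Big(\sum_{p<y}p^{-4\tau}\Big)\right),
\]
not the displayed bound with a $k$-free absolute constant. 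Either the transcription of Ghosh's lemma here has suppressed a harmless polynomial-in-$k$ factor, or Ghosh's formulation packages the combinatorics slightly differently; in any case the extra $\binom{k}{2}$ is immaterial for the application in Lemma~\ref{lem3.6}, where it is swallowed by the subsequent crude estimates. You should either (i) state and prove the version with the explicit $\binom{k}{2}$, noting that it suffices for the application, or (ii) consult Ghosh's original for the precise form of the error term, rather than claim an absorption that does not hold.
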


\begin{proof}[Proof of Lemma \ref{lem3.6}]
To simplify notation, let $F_K(t)=F_K(t,\sigma;x)$ denotes the function 
\[-\sum_{n\leq x^2}\frac{\Lambda_{K}(n)}{n^{\sigma+it}}.\]
We have  
\[\psi_w(z)=\sum_{n=0}^{N-1}\frac{i^n}{n!} \langle z,w\rangle^n
+O\Big(\frac{1}{N!}\langle z,w\rangle^N\Big),\]
by the Taylor series expansion, where $N$ is any function of $T$ that takes even values and increases with $T$. 
Then the integral
\[\frac{1}{R}\int_0^R\psi_w(F_K(t))dt\]
is equal to 
\begin{equation}\label{eq3.17}
\sum_{n=0}^{N-1}\frac{i^n}{n!}\frac{1}{R}\int_0^R\langle F_K(t),w\rangle^ndt
+O\Big(\frac{1}{N!}\frac{1}{R}\int_0^R\Big|\langle F_K(t),w\rangle\Big|^Ndt\Big).
\end{equation}
By using the expansion 
\begin{align*}
\langle F_K(t),w\rangle^n
&=\Big(\frac{\overline{w}F_K(t)+w\overline{F_K(t)}}{2}\Big)^n\\
&=\frac{1}{2^n}\sum_{m=0}^n\binom{n}{m}
\overline{w}^mw^{n-m}F_K(t)^m\overline{F_K(t)}^{n-m},
\end{align*}
the first term of \eqref{eq3.17} is equal to 
\[\sum_{n=0}^{N-1}\frac{i^n}{2^nn!}\sum_{m=0}^n\binom{n}{m}\overline{w}^mw^{n-m}
\frac{1}{R}\int_0^RF_K(t)^m\overline{F_K(t)}^{n-m}dt.\]
Therefore, the difference we should consider is equal to 
\begin{align}\label{eq3.18}
&\sum_{n=0}^{N-1}\frac{i^n}{2^nn!}\sum_{m=0}^n\binom{n}{m}\overline{w}^mw^{n-m}
H(n,m)\\
&+O\Big(\frac{|w|^N}{N!}\Big(\frac{1}{T}\int_0^T|F_K(t)|^Ndt
+\frac{1}{R}\int_0^R|F_K(t)|^Ndt\Big)\Big), \nonumber
\end{align}
where 
\[H(n,m)=\frac{1}{T}\int_0^TF_K(t)^m\overline{F_K(t)}^{n-m}dt
-\frac{1}{R}\int_0^RF_K(t)^m\overline{F_K(t)}^{n-m}dt.\]
Then we investigate $H(n,m)$. 
Let $l=n-m$.
At first, suppose $m$ and $l$ are both non-zero. 
Then we have 
\begin{align*}
&F_K(t)^m\overline{F_K(t)}^l\\
&=\sum_{\substack{n_{11},\ldots,n_{1m}\leq x^2\\n_{21},\ldots,n_{2l}\leq x^2}}
\frac{\Lambda_K(n_{11})\cdots\Lambda_K(n_{1m})\Lambda_K(n_{21})\cdots
\Lambda_K(n_{2l})}
{(n_{11}\cdots n_{1m}n_{21}\cdots n_{2l})^\sigma}
\Big(\frac{n_{11}\cdots n_{1m}}{n_{21}\cdots n_{2l}}\Big)^{-it}.
\end{align*}
Hence we obtain 
\begin{align*}
&H(n,m)\\
&\ll \frac{d^n}{T}\sum_{\substack{n_{11},\ldots,n_{1m}\leq x^2\\ 
n_{21},\ldots,n_{2l}\leq x^2\\ 
n_{11}\cdots n_{1m}\neq n_{21}\cdots n_{2l}}}
\frac{\Lambda(n_{11})\cdots\Lambda(n_{1m})\Lambda(n_{21})\cdots
\Lambda(n_{2l})}
{(n_{11}\cdots n_{1m}n_{21}\cdots n_{2l})^\sigma}
\left|\log\frac{n_{11}\cdots n_{1m}}{n_{21}\cdots n_{2l}}\right|^{-1},
\end{align*}
since $\Lambda_K(n)\leq d\Lambda(n)$.
Therefore, we conclude 
\[H(n,m)\ll\frac{d^nx^{5n}}{T}\]
by {\cite[Lemma 2.1.7]{Guo1}}. 
Next, we suppose $l=0$ and $m\neq0$, that is, $n=m>0$.
Then we have 
\[\frac{1}{R}\int_0^RF_K(t)^m\overline{F_K(t)}^{n-m}dt
=\frac{1}{R}\int_0^R\sum_{k_1,\ldots,k_n\leq x^2}
\frac{\Lambda_K(k_1)\cdots\Lambda_K(k_n)}{(k_1\cdots k_n)^{\sigma+it}}dt, \]
hence this integral is bounded as 
\[\ll\frac{d^n}{R}\sum_{k_1,\ldots,k_n\leq x^2}
\frac{\Lambda(k_1)\cdots\Lambda(k_n)}
{(k_1\cdots k_n)^\sigma|\log(k_1\cdots k_n)|}
\ll\frac{d^n}{T}\Big(\sum_{k\leq x^2}\Lambda(k)\Big)^n
\ll\frac{d^nx^{5n/2}}{T}. \]
Thus we have again $H(n,m)\ll T^{-1}d^nx^{5n}$. 
Similarly, we have $H(n,m)\ll T^{-1}d^nx^{5n}$ assuming $m=0$ and $l\neq0$, and we have $H(0,0)=0$. 
Therefore, the first term of \eqref{eq3.18} is 
\[\ll\frac{d^Nx^{5N}}{T}(1+|w|^2)^{\frac{N}{2}}.\]
For the second term of \eqref{eq3.18}, we must estimate the moments of $F_K(t)$. 
We divide the sum defining $F_K(t)$ as follows: 
\begin{equation}\label{eq3.19}
\sum_{n\leq x^2}\frac{\Lambda_{K}(n)}{n^{\sigma+it}}
=\sum_{p\leq x^2}\frac{\Lambda_{K}(p)}{p^{\sigma+it}}
+\sum_{\substack{p^a\leq x^2\\a\geq2}}\frac{\Lambda_{K}(p^a)}{p^{a(\sigma+it)}}.
\end{equation}
The absolute value of the second term of \eqref{eq3.19} is estimated as 
\[\leq d\sum_{p\leq x^2}\log p\sum_{a=2}^\infty\frac{1}{p^{a\sigma}}
\leq4d\sum_{p\leq x^2}\frac{\log p}{p^{2\sigma}}
\leq4d|\zeta'(2\sigma)|.\]
Let $N=2M$. Then we have 
\begin{align*}
\int_0^R|F_K(t)|^{2M}dt
&\leq\int_0^R\Big(\Big|\sum_{p\leq x^2}\frac{\Lambda_K(p)}{p^{\sigma+it}}\Big|
+4d|\zeta'(2\sigma)|\Big)^{2M}dt\\
&\leq4^M\int_0^R\Big|\sum_{p\leq x^2}\frac{\Lambda_K(p)}{p^{\sigma+it}}\Big|^{2M}dt
+4^M(4d\zeta'(2\sigma))^{2M}R. \nonumber
\end{align*}
Using the formula \eqref{eq3.16} again, the first term of the above is 
\begin{equation}\label{eq3.20}
\ll d^{2M}(R+x^{2M})\sum_{\substack{p_1,\ldots,p_M\leq x^2\\q_1,\ldots,q_M\leq x^2\\
p_1\cdots p_M=q_1\cdots q_M}}
\frac{\log p_1\cdots\log p_M\log q_1\cdots\log q_M}{(p_1\cdots p_M)^{2\sigma}}.
\end{equation}
By using Lemma \ref{lem3.7}, the Dirichlet polynomial \eqref{eq3.20} is estimated as
\begin{align*}
&\ll d^{2M}(R+x^{2M})\Big\{
M!\Big(\sum_{p<x^2}\frac{\log^2p}{p^{2\sigma}}\Big)^M\\
&\hspace{30mm}+(\log x)^{2M}M!\Big(\sum_{p<x^2}p^{-2\sigma}\Big)^{M-2}
\Big(\sum_{p<x^2}p^{-4\sigma}\Big)\Big\}\\
&\ll d^{2M}(R+x^{2M})M!(\log x)^{2M}\zeta(2\sigma)^M.
\end{align*}
Consequently, we obtain that 
\[\int_0^R|F_K(t)|^{2M}dt
\ll d^{2M}2^{6M}(R+x^{2M})\{M!(\log x)^{2M}
\zeta(2\sigma)^M+\zeta'(2\sigma)^{2M}\}.\]
Hence the second term of \eqref{eq3.18} is  
\[\ll\frac{(8d|w|)^N}{N!}\Big(1+\frac{x^N}{T}\Big)
\{(\zeta(2\sigma)^{\frac{1}{2}}\log x)^N(\tfrac{N}{2})!+\zeta'(2\sigma)^N\}.\]
Then Lemma \ref{lem3.6} follows.
\end{proof}

\vspace{3mm}

\noindent{\it Step4}\hspace{3mm}
At the final step, we prove the following lemma.
\begin{lem}\label{lem3.8}
Let $\sigma_1$ be a large fixed positive real number and $A>0$. 
Then there exists an absolute constant $T_0>0$ such that for all $R\geq T\geq T_0$, 
for all $1/2<\sigma\leq\sigma_1$, and for all $|w|\leq A$, 
we have 
\[\frac{1}{R}\int_0^R\psi_w\Big(-\sum_{n\leq x^2}
\frac{\Lambda_{K}(n)}{n^{\sigma+it}}\Big)dt
=\frac{1}{R}\int_0^R\psi_w\Big(\frac{\zeta'_K}{\zeta_K}(\sigma+it;x)
\Big)dt+E_4,\]
where 
\[E_4\ll\frac{d|w|(\log x)x^{1-2\sigma}}{\sigma-\frac{1}{2}},\]
and $x$ is any function of $T$ that grows with $T$. 
The implied constant is absolute.
\end{lem}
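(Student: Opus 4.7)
The plan is to bound the difference of the two integrands pointwise in $t$ via the Lipschitz property of $\psi_w$, reducing the problem to a tail estimate for a Dirichlet series supported on prime powers.

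\textbf{Identifying the difference.} Since $\Lambda_K$ is supported on prime powers, the truncated von~Mangoldt sum equals
\[-\sum_{n\le x^2}\frac{\Lambda_K(n)}{n^{\sigma+it}}=-\sum_{p^m\le x^2}\frac{\Lambda_K(p^m)}{p^{m(\sigma+it)}},\]
whereas by the expression just after \eqref{eq3.5}, $(\zeta_K'/\zeta_K)(\sigma+it;x)=-\sum_{p\le x^2}\sum_{m\ge 1}\Lambda_K(p^m)/p^{m(\sigma+it)}$. Subtracting, the two expressions appearing inside $\psi_w$ differ by
\[D(t)=\sum_{\substack{p\le x^2\\ p^m>x^2}}\frac{\Lambda_K(p^m)}{p^{m(\sigma+it)}}.\]
From $\psi_w(z)=\exp(i\langle z,w\rangle)$ one has $|\psi_w(z_1)-\psi_w(z_2)|\le|w|\,|z_1-z_2|$, so
\[|E_4|\le\frac{|w|}{R}\int_0^R|D(t)|\,dt\le |w|\sum_{\substack{p\le x^2\\ p^m>x^2}}\frac{\Lambda_K(p^m)}{p^{m\sigma}}.\]

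\textbf{Estimating the tail.} Apply the bound $\Lambda_K(p^m)\le d\log p\le 2d\log x$ (the latter since $p\le x^2$), and rewrite the constraint $p^m>x^2$ as $p>x^{2/m}$, which forces $m\ge 2$. Regrouping by $m$ yields
\[\sum_{\substack{p\le x^2\\ p^m>x^2}}\frac{\Lambda_K(p^m)}{p^{m\sigma}}\le 2d\log x\sum_{m\ge 2}\sum_{p>x^{2/m}}\frac{1}{p^{m\sigma}}.\]
For every $m\ge 2$, partial summation together with Chebyshev's bound $\theta(y)\ll y$ gives $\sum_{p>y}p^{-m\sigma}\ll y^{1-m\sigma}/(m\sigma-1)$; setting $y=x^{2/m}$ this becomes $\ll x^{2/m-2\sigma}/(m\sigma-1)$. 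The $m=2$ term contributes $\ll x^{1-2\sigma}/(\sigma-\tfrac{1}{2})$. For $m\ge 3$ one has $2/m-2\sigma\le 1-2\sigma-\tfrac13$ and $m\sigma-1\ge\tfrac12$, while the number of $m$ with $x^{2/m}\ge 2$ is $O(\log x)$; hence these terms are absorbed by the $m=2$ contribution for $x$ large. Combining everything yields
\[|E_4|\ll d|w|(\log x)\cdot\frac{x^{1-2\sigma}}{\sigma-\tfrac{1}{2}},\]
which is the stated bound.

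\textbf{Main obstacle.} The only delicate point is avoiding a uniform bound like $p^{-m\sigma}<x^{-2\sigma}$ for all admissible $p$: combined with $\sum_{p\le x^2}\log p\ll x^2$ this would only yield the much weaker $x^{2-2\sigma}$. Splitting by $m$ and using partial summation exploits the genuine decay of $p^{-m\sigma}$ for $p\sim x^{2/m}$, producing the correct exponent $1-2\sigma$.
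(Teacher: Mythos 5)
Your decomposition of the tail by the exponent $m$, rather than by the prime $p$, is a genuinely different route from the paper's. The paper splits $\sum_{p\le x^2}\log p\sum_{p^m>x^2}p^{-m\sigma}$ into the ranges $p\le x$ and $x<p\le x^2$: for $p\le x$ it bounds the inner geometric $m$-sum directly, while for $x<p\le x^2$ it uses that $p^m>x^2$ forces $m\ge2$, so the inner sum is $\ll p^{-2\sigma}$ and the factor $1/(\sigma-\tfrac12)$ comes from $\sum_{x<p\le x^2}p^{-2\sigma}$. Your split by $m$ leads to the same exponent and the same $1/(\sigma-\tfrac12)$ via the $m=2$ layer, and is arguably cleaner.

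There is, however, a gap in your treatment of the $m$-tail. For $m\ge3$ you bound each inner sum by $x^{2/m-2\sigma}/(m\sigma-1)$; this is a valid per-$m$ estimate, but it is not summable over all $m\ge3$, since $1/(m\sigma-1)$ gives a divergent harmonic tail while $x^{2/m-2\sigma}$ only stabilizes at $x^{-2\sigma}$ rather than decaying as $m\to\infty$. Your observation that the number of $m$ with $x^{2/m}\ge2$ is $O(\log x)$ handles $3\le m\le M_0:=2\log x/\log 2$, producing $\ll(\log x)x^{2/3-2\sigma}$, which is indeed dominated by the $m=2$ term. But for $m>M_0$ the inner sum is not empty: every prime $p\ge2$ satisfies $p^m>x^2$, so these $m$ do contribute and are not covered by that count. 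The missing step is to note that for $m>M_0$ one has $\sum_{2\le p\le x^2}p^{-m\sigma}\ll 2^{-m\sigma}$, whose sum over $m>M_0$ is $\ll 2^{-M_0\sigma}=x^{-2\sigma}$ and hence negligible. With that supplement your argument closes and recovers the stated bound $E_4\ll d|w|(\log x)x^{1-2\sigma}/(\sigma-\tfrac12)$.
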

\begin{proof}
The difference 
\[\Big(-\sum_{n\leq x^2}\frac{\Lambda_{K}(n)}{n^{\sigma+it}}\Big)
-\frac{\zeta'_K}{\zeta_K}(\sigma+it;x) \]
is estimated as
\begin{equation}\label{eq3.21}
\ll d\sum_{p\leq x^2}\log p\sum_{p^m>x^2}\frac{1}{p^{m\sigma}}
=d\sum_{p\leq x}\log p\sum_{p^m>x^2}\frac{1}{p^{m\sigma}}
+d\sum_{x<p\leq x^2}\log p\sum_{p^m>x^2}\frac{1}{p^{m\sigma}}.
\end{equation}
For the first term of the right hand side of \eqref{eq3.21}, the inner sum is calculated as 
\[-\int_{\frac{2\log x}{\log p}}^\infty\Big(
\sum_{\frac{2\log x}{\log p}\leq m\leq t}1\Big)\frac{d}{dt}p^{-\sigma t}dt
\ll \Big(\frac{\log x}{\log p}+1\Big)x^{-2\sigma}.\]
Thus the first term is $\ll d(\log x) x^{1-2\sigma}$. 
The inner sum of the second term of \eqref{eq3.21} is $\ll p^{-2\sigma}$, hence the second term is 
\[\ll\frac{d|w|(\log x)x^{1-2\sigma}}{\sigma-\frac{1}{2}}.\]
Therefore, we complete the proof of Lemma \ref{lem3.8}. 
\end{proof}

\begin{proof}[Proof of Lemma \ref{lem3.1}]
We now choose the functions $x,h,A,$ and $N$ as follows. 
Assume that $\sigma$ is in the interval $[D_\epsilon+(\log T)^{-\theta},\sigma_1]$. 
Then we set 
\begin{equation}\label{eq3.22}
x=\exp((\log T)^{\theta_1}),~~h=\exp((\log T)^{\theta_2}),~~
A=(\log T)^\delta,~~N=2\lfloor(\log T)^{\theta_3}\rfloor, 
\end{equation}
where $\theta,~\theta_1,~\theta_2,~\theta_3,$ and $\delta$ are positive real numbers satisfying 
\begin{align}\label{eq3.23}
0<\delta<1,~~0<\theta<\theta_1<1-\theta<1,~~\theta_2=\tfrac{1}{2}(\theta_1+1-\theta),
\\
2\delta+\theta+3\theta_1<1,~~2(\delta+\tfrac{1}{2}\theta+\theta_1)<\theta_3<1-\theta_1. \nonumber
\end{align}
Under these choices of the functions, the errors $E_i~(i=1,2,3,4)$ of Lemmas \ref{lem3.2}, \ref{lem3.5}, \ref{lem3.6}, and \ref{lem3.8} are 
\[\ll_{K,\sigma_1}\exp\big(-\frac{1}{4}(\log T)^{\theta_1-\theta}\big). \]
The implied constant depends only on $K$ and $\sigma_1$.
Then Lemma \ref{lem3.1} is deduced from lemmas in the above four steps with $\theta_1=5\theta/3$.
\end{proof}

\subsection{Limit formula of a special integral}
In the previous subsection, we finally reached the integral \eqref{eq3.1}. 
Next, we prove that the limit of the integral can be written as a certain finite product. \begin{lem}\label{lem3.9}
For any $\sigma,x>0$, and for any $w=u+iv\in\mathbb{C}$,
\[\lim_{R\to \infty}\frac{1}{R}\int_0^R \psi_w\Big(\frac{\zeta'_K}{\zeta_K}
(\sigma+it;x)\Big)dt
=m_K(u,v,\sigma;x), \]
where the right hand side is defined by 
\[m_K(u,v,\sigma;x)=\prod_{p\leq x^2}m_{K,p}(u,v,\sigma)\]
with the function $m_{K,p}(u,v,\sigma)$ as in \eqref{eq2.4}.
\end{lem}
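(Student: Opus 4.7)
The plan is to exploit the additivity of $\psi_w$ to factor the integrand as a finite product over primes $p\leq x^2$, and then invoke the Kronecker--Weyl equidistribution theorem on a finite-dimensional torus. Since $\psi_w(z) = \exp(i\mathfrak{R}(z\overline{w}))$ and $\mathfrak{R}$ is $\mathbb{R}$-linear, $\psi_w$ is a continuous additive character of $\mathbb{C}$: $\psi_w(z_1+z_2) = \psi_w(z_1)\psi_w(z_2)$. Applying this to the finite sum defining $(\zeta_K'/\zeta_K)(\sigma+it;x)$ yields
\[
\psi_w\Big(\frac{\zeta_K'}{\zeta_K}(\sigma+it;x)\Big)
= \prod_{p\leq x^2} g_p\Big(-\frac{t\log p}{2\pi}\Big),
\]
where, for each prime $p$, the function $g_p:\mathbb{R}/\mathbb{Z}\to\mathbb{C}$ is
\[
g_p(\theta) = \psi_w\Big(-\sum_{m=1}^\infty \frac{\Lambda_K(p^m)}{p^{m\sigma}}\, e^{2\pi i m\theta}\Big).
\]
The bound $\Lambda_K(p^m)\leq dm\log p$ makes the inner series absolutely and uniformly convergent for every fixed $\sigma>0$, so $g_p$ is continuous on $\mathbb{T}:=\mathbb{R}/\mathbb{Z}$ with $|g_p|\leq 1$. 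From the definition \eqref{eq2.4}, $\int_0^1 g_p(\theta)\,d\theta = m_{K,p}(u,v,\sigma)$.

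The next step is to apply the Kronecker--Weyl equidistribution theorem to the linear flow on $\mathbb{T}^N$, where $N=\pi(x^2)$ is the number of primes up to $x^2$, given by
\[
\Phi(t) = \Big(-\frac{t\log p}{2\pi}\bmod 1\Big)_{p\leq x^2}.
\]
The real numbers $\{\log p:p\text{ prime}\}$ are linearly independent over $\mathbb{Q}$ by unique factorization: for every nonzero $(n_p)\in\mathbb{Z}^N$ we have $\sum_p n_p\log p=\log\prod_p p^{n_p}\neq 0$. Consequently, the map $t\mapsto\exp\bigl(-it\sum_p n_p\log p\bigr)$ is a non-trivial continuous additive character of $\mathbb{R}$, and therefore has vanishing mean over $[0,R]$ as $R\to\infty$. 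Weyl's criterion (in its continuous version for flows on a torus) then gives, for every continuous $F$ on $\mathbb{T}^N$,
\[
\lim_{R\to\infty}\frac{1}{R}\int_0^R F(\Phi(t))\,dt
=\int_{\mathbb{T}^N}F(\theta_1,\ldots,\theta_N)\,d\theta_1\cdots d\theta_N.
\]
Applying this with $F(\theta_1,\ldots,\theta_N)=\prod_{j=1}^N g_{p_j}(\theta_j)$ (enumerating $p_1<\cdots<p_N$) and using Fubini on the right-hand side yields
\[
\lim_{R\to\infty}\frac{1}{R}\int_0^R \psi_w\Big(\frac{\zeta_K'}{\zeta_K}(\sigma+it;x)\Big)\,dt
=\prod_{p\leq x^2}\int_0^1 g_p(\theta)\,d\theta
=m_K(u,v,\sigma;x),
\]
as required.

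The argument is essentially classical, in the spirit of the original Bohr--Jessen approach, and I do not anticipate a serious obstacle. The two points requiring care are: (i) verifying that the uniform convergence coming from $\Lambda_K(p^m)\leq dm\log p$ is enough to make each $g_p$ a bona fide continuous function on $\mathbb{T}$, so that Weyl's criterion applies to the product $F$; and (ii) keeping the sign convention in $\theta=-t\log p/(2\pi)$ consistent with the definition of $m_{K,p}$, which causes no issue since the map $\theta\mapsto -\theta$ preserves Haar measure on $\mathbb{T}$.
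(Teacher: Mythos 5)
Your proof is correct and follows essentially the same route as the paper: factor the character of the finite Euler product into a product over $p\le x^2$ and apply equidistribution of $(-t\log p/2\pi)_{p\le x^2}$ on $\mathbb{T}^N$, using $\mathbb{Q}$-linear independence of $\{\log p\}$. The paper simply outsources the Kronecker--Weyl step to Heath-Brown's Lemma 2, which is the same statement packaged as a limit formula for $\frac1R\int_0^R\prod_i b_i(\gamma_i t)\,dt$. (Minor aside: the sharp bound from \eqref{eq2.3} is $\Lambda_K(p^m)\le d\log p$, not $dm\log p$, though your weaker bound suffices for the uniform convergence you need.)
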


\vspace{3mm}

This is a simple consequence of the following result of Heath--Brown {\cite[Lemma 2]{HeathBrown2}}. 
\begin{lem}
Let $b_i(t)$ $(1\leq i\leq k)$ be continuous functions of period 1 from 
$\mathbb{R}$ to $\mathbb{C}$.
Then there exists the limit 
\[\lim_{R\to\infty}\frac{1}{R}\int_0^R\prod_{i\leq k}b_i(\gamma_i t)dt\]
for any real constants $\gamma_1,\ldots,\gamma_k$.
Moreover, if the numbers $\gamma_i$ are linearly independent over $\mathbb{Q}$,
then the limit is equal to 
\[\prod_{i\leq k}\int_0^1b_i(t)dt.\]
\end{lem}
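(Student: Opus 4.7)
The plan is to prove this by approximating the continuous periodic functions uniformly by trigonometric polynomials and then handling the exponential building blocks directly. Consider the continuous function $B\colon \mathbb{T}^k\to\mathbb{C}$ defined by $B(t_1,\ldots,t_k)=\prod_{i\le k}b_i(t_i)$ on the $k$-torus. By the multidimensional Stone--Weierstrass theorem (or equivalently by taking Fej\'er kernel approximations in each variable), for each $\varepsilon>0$ there exists a trigonometric polynomial
\[
P_\varepsilon(t_1,\ldots,t_k)=\sum_{\vec{n}\in S_\varepsilon}c_{\vec{n}}\,e^{2\pi i(n_1t_1+\cdots+n_kt_k)}
\]
with a \emph{finite} index set $S_\varepsilon\subset\mathbb{Z}^k$ such that $\|B-P_\varepsilon\|_{\infty}<\varepsilon$ on $\mathbb{T}^k$.

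For each single exponential, direct computation gives
\[
\frac{1}{R}\int_0^R e^{2\pi i(n_1\gamma_1+\cdots+n_k\gamma_k)t}\,dt
=\begin{cases}1&\text{if }\sum_i n_i\gamma_i=0,\\[1mm] O_{\vec{n}}(R^{-1})&\text{otherwise,}\end{cases}
\]
so by linearity the limit $L(P_\varepsilon):=\lim_{R\to\infty}\frac{1}{R}\int_0^R P_\varepsilon(\gamma_1t,\ldots,\gamma_kt)\,dt$ exists and equals $\sum_{\vec{n}\in S_\varepsilon,\ \sum n_i\gamma_i=0}c_{\vec{n}}$. Since $|S_\varepsilon|$ is finite, the $O(R^{-1})$ terms sum to something that vanishes as $R\to\infty$ with $\varepsilon$ fixed.

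For the existence of the limit for $B$ itself, observe that for any $R,R'>0$,
\[
\Bigl|\tfrac{1}{R}\!\!\int_0^R\!\!B(\gamma_1t,\ldots)\,dt-\tfrac{1}{R'}\!\!\int_0^{R'}\!\!B(\gamma_1t,\ldots)\,dt\Bigr|\le 2\varepsilon+\Bigl|\tfrac{1}{R}\!\!\int_0^R\!\!P_\varepsilon(\gamma_1t,\ldots)\,dt-\tfrac{1}{R'}\!\!\int_0^{R'}\!\!P_\varepsilon(\gamma_1t,\ldots)\,dt\Bigr|.
\]
Taking $R,R'$ large (depending on $\varepsilon$ and on the nonzero frequencies in $S_\varepsilon$), the right-hand side is $\le 3\varepsilon$. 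Hence the time averages form a Cauchy net, and the limit exists; a standard estimate shows moreover that $\bigl|\lim_{R\to\infty}\tfrac{1}{R}\int_0^R B(\gamma_1t,\ldots)dt-L(P_\varepsilon)\bigr|\le\varepsilon$.

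Finally, if $\gamma_1,\ldots,\gamma_k$ are linearly independent over $\mathbb{Q}$, then $\sum_i n_i\gamma_i=0$ with $n_i\in\mathbb{Z}$ forces $n_i=0$ for each $i$, so $L(P_\varepsilon)=c_{\vec{0}}=\int_{\mathbb{T}^k}P_\varepsilon\,dt_1\cdots dt_k$; letting $\varepsilon\to 0$ and invoking uniform convergence yields $\int_{\mathbb{T}^k}B\,dt_1\cdots dt_k=\prod_{i\le k}\int_0^1 b_i(t)\,dt$. The main technical obstacle is the uniformity of the approximation in the existence argument: one must commit to a polynomial $P_\varepsilon$ (with a finite, fixed index set) \emph{before} sending $R\to\infty$, so that the sole source of $R$-dependence in the trigonometric polynomial average decays; the degenerate case in which some $\gamma_i=0$ is handled separately by noting that $b_i(\gamma_it)\equiv b_i(0)$ becomes a constant factor, and the statement reduces to one in fewer variables.
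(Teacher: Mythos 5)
The paper itself does not prove this lemma: it is quoted from Heath--Brown \cite[Lemma 2]{HeathBrown2} without proof, so there is no in-paper argument to compare against. Your proof is correct and is the standard Weyl-type argument one would expect to underlie Heath--Brown's lemma: lift $\prod_i b_i(t_i)$ to the torus $\mathbb{T}^k$, uniformly approximate it by a trigonometric polynomial $P_\varepsilon$, observe that the time-average of $e^{2\pi i(\sum n_i\gamma_i)t}$ converges (to $1$ if $\sum n_i\gamma_i=0$, to $0$ otherwise, with decay $O(R^{-1})$ once the finite frequency set is fixed), obtain the Cauchy-net estimate with error $3\varepsilon$ for the averages of the original function, and finally let $\varepsilon\to 0$. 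The linear-independence hypothesis cleanly isolates the constant coefficient $c_{\vec 0}=\int_{\mathbb{T}^k}P_\varepsilon$, yielding the product formula, and you correctly note both the order-of-limits subtlety (fix $P_\varepsilon$ before sending $R\to\infty$) and the degenerate case $\gamma_i=0$. No gaps.
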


\begin{proof}[Proof of Lemma \ref{lem3.9}]
We take $b_p(t)$ as the function
\[\psi_w\Big(-\sum_{m=1}^\infty\frac{\Lambda_K(p^m)}{p^{m\sigma}}e^{2\pi imt}\Big)\]
and $\gamma_p=-\log p/2\pi$, then Heath--Brown's lemma yields Lemma \ref{lem3.9}.
\end{proof}

\subsection{Behavior of $m_{K,p}(u,v,\sigma)$ for large $p$}
To prove Proposition \ref{prop2.1}, the remainder work is the replacement of $m_K(u,v,\sigma;x)$ with $m_K(u,v,\sigma)$. 
Keeping in mind that $|m_{K,p}(u,v,\sigma)|\leq1$, we show the following estimate. 
\begin{lem}\label{lem3.11}
Let $\sigma_1$ be a large fixed positive real number. 
Let $\delta, \theta_1$ and $\theta$ are real numbers satisfying the conditions \eqref{eq3.23} with $\theta_1=5\theta/3$. 
Then there exists a positive constant $T_0=T_0(\delta, \theta, \sigma_1)$ such that for all $T\geq T_0$, for all $1/2+(\log T)^{-\theta}<\sigma\leq\sigma_1$, and for all real numbers $u$ and $v$ in the interval $[-A,A]$, 
\[\Big|\prod_{p\geq x^2}m_{K,p}(u,v,\sigma)-1\Big|
\ll_{d,\sigma_1}\exp\big(-\frac{1}{4}(\log T)^{\frac{2}{3}\theta}\big),\]
where $x$ and $A$ are the functions as in \eqref{eq3.22}. 
The implied constant depends only on $d$ and $\sigma_1$. 
\end{lem}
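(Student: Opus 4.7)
The plan is to bound each factor $m_{K,p}(u,v,\sigma)$ near $1$ and then convert a product estimate into a sum estimate. Since $|m_{K,p}(u,v,\sigma)|\le 1$ for every $p$, the telescoping identity $\prod z_p-1=\sum_k(z_1\cdots z_{k-1})(z_k-1)$ gives
\[
\Big|\prod_{p\ge x^2}m_{K,p}(u,v,\sigma)-1\Big|\le\sum_{p\ge x^2}|m_{K,p}(u,v,\sigma)-1|,
\]
so it suffices to show the right-hand side is $\ll\exp(-\tfrac14(\log T)^{\frac23\theta})$. Write $F_p(t)=-\sum_{m\ge 1}\Lambda_K(p^m)p^{-m\sigma}e^{2\pi imt}$ and expand $\psi_w(F_p(t))=\sum_{n\ge 0}\tfrac{i^n}{n!}\langle F_p(t),w\rangle^n$ inside \eqref{eq2.4}.

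Two orthogonality facts collapse the leading terms. Since $\int_0^1 e^{2\pi imt}\,dt=0$ for $m\ge 1$, one has $\int_0^1 F_p(t)\,dt=0$, so the $n=1$ Taylor contribution vanishes. Writing $\langle z,w\rangle^2=\tfrac14(z^2\bar w^2+2|z|^2|w|^2+\bar z^2 w^2)$ and observing that $\int_0^1 F_p(t)^2\,dt=\int_0^1\overline{F_p(t)}^2\,dt=0$ because in each product of two Fourier modes the frequencies add to at least $2$, the $n=2$ term equals
\[
-\tfrac{|w|^2}{4}\sum_{m\ge 1}\frac{\Lambda_K(p^m)^2}{p^{2m\sigma}}\ll_d\frac{|w|^2(\log p)^2}{p^{2\sigma}},
\]
where the bound uses $\Lambda_K(p^m)\le d\log p$. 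For $n\ge 3$ I bound $|F_p(t)|\le d\log p\sum_{m\ge 1}p^{-m\sigma}\ll d(\log p)/p^\sigma$, giving $|\langle F_p(t),w\rangle^n|\ll(|w|d\log p/p^\sigma)^n$. Once $p\ge x^2$ and $T$ is large the ratio $|w|d\log p/p^\sigma$ is smaller than $1/2$ (because $p^\sigma\ge x=\exp((\log T)^{5\theta/3})$ dwarfs $|w|d\log p\ll(\log T)^\delta\log p$), so the geometric tail contributes a term dominated by the quadratic estimate. Hence
\[
|m_{K,p}(u,v,\sigma)-1|\ll_d\frac{|w|^2(\log p)^2}{p^{2\sigma}}\qquad(p\ge x^2,\ T\text{ large}).
\]

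It remains to sum this over $p\ge x^2$. Estimating $\sum_{p\ge x^2}(\log p)^2/p^{2\sigma}$ above by the corresponding integral $\int_{x^2}^\infty(\log u)^2 u^{-2\sigma}\,du$ and computing by repeated integration by parts yields
\[
\sum_{p\ge x^2}\frac{(\log p)^2}{p^{2\sigma}}\ll\frac{(\log x)^2\,x^{-2(2\sigma-1)}}{2\sigma-1}+\frac{(\log x)\,x^{-2(2\sigma-1)}}{(2\sigma-1)^2}+\frac{x^{-2(2\sigma-1)}}{(2\sigma-1)^3}.
\]
With $\log x=(\log T)^{5\theta/3}$ and $2\sigma-1\ge 2(\log T)^{-\theta}$ from \eqref{eq3.22}, one has $x^{2(2\sigma-1)}\ge\exp(4(\log T)^{\frac23\theta})$ and every prefactor is a power of $\log T$. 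Combining with $|w|^2\ll(\log T)^{2\delta}$ gives
\[
\sum_{p\ge x^2}|m_{K,p}-1|\ll_d(\log T)^{C}\exp\bigl(-4(\log T)^{\frac23\theta}\bigr)\ll\exp\bigl(-\tfrac14(\log T)^{\frac23\theta}\bigr)
\]
for some absolute $C>0$ and all $T$ sufficiently large, completing the proof. The main obstacle is the last step: when $\sigma$ is close to $1/2$ the prefactor $(2\sigma-1)^{-3}$ inflates to $\tfrac18(\log T)^{3\theta}$, and only the calibration $\theta_1=5\theta/3$ makes the exponential decay $\exp(-2(2\sigma-1)\log x)$ large enough to absorb it with the required margin of $\exp(-\tfrac14(\log T)^{\frac23\theta})$.
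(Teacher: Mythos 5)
Your proof is correct and follows essentially the same route as the paper's: expand $\psi_w(F_p(t))$ in a Taylor series, use orthogonality to kill the linear term and to identify the quadratic term with $-\tfrac{|w|^2}{4}\sum_m \Lambda_K(p^m)^2 p^{-2m\sigma}$, bound the tail $n\ge 3$ by the quadratic estimate, and then sum $\ll_d |w|^2\log^2 p/p^{2\sigma}$ over $p\ge x^2$ to get the exponential decay in $T$. The only cosmetic difference is that you pass from the product to the sum via the telescoping inequality $|\prod_p z_p - 1|\le\sum_p|z_p-1|$ (valid since $|m_{K,p}|\le 1$), whereas the paper first defines $\log m_{K,p}$, bounds $\sum_p|\log m_{K,p}|$, and exponentiates back; both devices yield the same final estimate.
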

\begin{proof}
We calculate $m_{K,p}(u,v,\sigma)$ in \eqref{eq2.4} as 
\begin{equation}\label{eq3.24}
m_{K,p}(u,v,\sigma)=\int_0^1\exp\{iua_p^{(1)}(t)+iva_p^{(2)}(t)\}dt,
\end{equation}
where 
\begin{equation}\label{eq3.25}
a_p^{(1)}(t)=-\sum_{m=1}^\infty\frac{\Lambda_K(p^m)}{p^{m\sigma}}\cos(2\pi mt)
\end{equation}
and
\begin{equation}\label{eq3.26}
a_p^{(2)}(t)=-\sum_{m=1}^\infty\frac{\Lambda_K(p^m)}{p^{m\sigma}}\sin(2\pi mt).
\end{equation}
We expand the exponential in \eqref{eq3.24}. 
Due to the identities 
\[\int_0^1a_p^{(1)}(t)dt=\int_0^1a_p^{(2)}(t)dt=0, \]
\[\int_0^1(a_p^{(1)}(t))^2dt=\int_0^1(a_p^{(2)}(t))^2dt
=\frac{1}{2}\sum_{m=1}^\infty\frac{\Lambda_K(p^m)^2}{p^{2m\sigma}}, \]
and
\[\int_0^1a_p^{(1)}(t)a_p^{(2)}(t)dt=0, \]
the function $m_{K,p}(u,v,\sigma)$ is written as 
\begin{equation}\label{eq3.27}
1-\mu+R, 
\end{equation}
where
\[\mu=\frac{u^2+v^2}{4}\sum_{m=1}^\infty
\frac{\Lambda_K(p^m)^2}{p^{2m\sigma}}\]
and 
\[R=\int_0^1\sum_{k=3}^\infty\frac{1}{k!}(iua_p^{(1)}(t)+iva_p^{(2)}(t))^kdt. \]
We assume $p\geq x^2$, where $x=\exp((\log T)^{\frac{5}{3}\theta})$. 
Then we find that $\mu$ and $R$ are small when $T$ is sufficiently large. 
In fact, we have 
\[|\mu|\ll\frac{(u^2+v^2)d^2\log^2p}{p^{2\sigma}}\ll(dAx^{1-2\sigma}\log x)^2\]
for $p\geq x^2$.
By the conditions for $x, A,$ and $\sigma$, 
\[Ax^{1-2\sigma}\log x\leq \exp\big(-\frac{1}{4}(\log T)^{\frac{2}{3}\theta}\big),\]
and this tends to zero as $T$ tends to infinity.
The argument for $R$ is similar. 

Hence we can define $\log m_{K,p}(u,v,\sigma)$, where $\log$ means the principal blanch of the logarithm. 
By using \eqref{eq3.27}, $\log m_{K,p}(u,v,\sigma)$ is calculated as $-\mu+R+O(|\mu|^2+R^2)$. 
Thus we have 
\[|\log m_{K,p}(u,v,\sigma)|\ll_d(u^2+v^2)\frac{\log^2p}{p^{2\sigma}},\]
where the implied constant depends only on $d$. 
This deduces the following estimate 
\begin{align*}
\prod_{p\geq x^2}m_{K,p}(u,v,\sigma)
&=\exp\Big(\sum_{p\geq x^2}\log m_{K,p}(u,v,\sigma)\Big)\\
&=1+O\Big(\sum_{p\geq x^2}|\log m_{K,p}(u,v,\sigma)|\Big)\\
&=1+O_d\Big((u^2+v^2)\sum_{p\geq x^2}\frac{\log^2p}{p^{2\sigma}}\Big), 
\end{align*}
and we have 
\[\sum_{p\geq x^2}\frac{\log^2p}{p^{2\sigma}}
\ll_{\sigma_1}\frac{x^{2(1-2\sigma)}\log x}{(\sigma-\frac{1}{2})^2}\]
for $\sigma>1/2$ by the Prime Number Theorem. 
After adopting the choices of the functions $x$ and $A$, we obtain the 
desired estimate. 
\end{proof}

\vspace{3mm}

\begin{proof}[Proof of Proposition \ref{prop2.1}]
Lemma \ref{lem3.11} yields 
\begin{equation}\label{eq3.28}
m_K(u,v,\sigma;x)=m_K(u,v,\sigma)
+O_{K,\sigma_1}(\exp\big(-\frac{1}{4}(\log T)^{\frac{2}{3}\theta}\big),
\end{equation}
since $|m_{K,p}(u,v,\sigma)|\leq1$.
Then Proposition \ref{prop2.1} is deduced from Lemmas \ref{lem3.1}, \ref{lem3.9}, and \eqref{eq3.28}.
\end{proof}

\section{Proof of Proposition \ref{prop2.2}}\label{sec4}
In this section, we regard $m_{K,p}(u,v,\sigma)$ in \eqref{eq3.24} as a function of complex variables $u$ and $v$. 
At first, we examine the differentiability of $m_K(u,v,\sigma)$. 
\begin{lem}\label{lem4.1}
Let $\sigma_1$ be a large fixed positive real number, and fix $\sigma\in(1/2,\sigma_1]$.
Then for any non-negative integers $n$ and $m$, the function 
\[\frac{\partial^{n+m}}{\partial u^n\partial v^m}m_K(u,v,\sigma)\]
is an entire function of $u$ for any fixed $v$, and it is an entire function of $v$ for any fixed $u$.
\end{lem}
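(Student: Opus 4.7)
The plan is to upgrade the Euler-like product representation $m_K(u,v,\sigma)=\prod_p m_{K,p}(u,v,\sigma)$ from a real-analytic identity to a statement about joint holomorphy on $\mathbb{C}^2$. Once this is established, the separate-variable entirety of $\partial^{n+m}m_K/\partial u^n\partial v^m$ follows from standard complex analysis.

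First, I would check that each local factor $m_{K,p}(u,v,\sigma)$ is a (jointly) entire function of $(u,v)\in\mathbb{C}^2$. The integrand in \eqref{eq2.4} is entire in $(u,v)$ for every $t\in[0,1]$, and on any polydisc $\{|u|,|v|\le R\}$ it is dominated in absolute value by
\[
\exp\bigl(R(|a_p^{(1)}(t)|+|a_p^{(2)}(t)|)\bigr)\le \exp\!\Big(2R\sum_{m\ge 1}\tfrac{d\log p}{p^{m\sigma}}\Big),
\]
which is a finite constant (depending on $p$, $R$, $\sigma$). Morera's theorem plus Fubini (or differentiation under the integral sign) then makes $m_{K,p}$ entire on $\mathbb{C}^2$.

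Next, I would promote the expansion carried out in the proof of Lemma \ref{lem3.11} to complex values of $u,v$. For $|u|,|v|\le R$, expanding the exponential in \eqref{eq3.24} and using the vanishing of $\int_0^1 a_p^{(j)}(t)\,dt$ gives the identity \eqref{eq3.27} with the same $\mu$ and $R$, now viewed as holomorphic in $(u,v)$. The estimate
\[
|m_{K,p}(u,v,\sigma)-1|\ll_{R,K,\sigma}\frac{\log^2 p}{p^{2\sigma}}
\]
goes through verbatim on the polydisc once one replaces $u^2+v^2$ by $|u|^2+|v|^2$ in the majorant and absorbs the resulting constant into the implied constant. Since $\sigma>1/2$, the series $\sum_p\log^2 p/p^{2\sigma}$ converges, so $\sum_p|m_{K,p}(u,v,\sigma)-1|$ converges uniformly on $\{|u|,|v|\le R\}$. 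By the standard criterion for infinite products of holomorphic functions (take logarithms on the tail, where each factor is close to $1$), the partial products $\prod_{p\le N}m_{K,p}$ converge uniformly on compact subsets of $\mathbb{C}^2$ to $m_K(u,v,\sigma)$, and Weierstrass's theorem shows the limit is entire on $\mathbb{C}^2$.

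Finally, since $m_K(u,v,\sigma)$ is jointly holomorphic on $\mathbb{C}^2$, its mixed complex partial derivatives of all orders exist and are again jointly holomorphic on $\mathbb{C}^2$; restricting to the slice with one variable fixed yields entire functions of the remaining one, which is the conclusion. The only delicate point — and the main obstacle worth flagging — is step two: for complex $u,v$ the trivial bound $|m_{K,p}|\le 1$ used in Section \ref{sec3} fails, so one must redo the Taylor expansion argument directly and check that the $O(\log^2 p/p^{2\sigma})$ majorant survives with the constant depending only on $R$, $K$, and $\sigma_1$. Once that estimate is in hand, the rest is a routine application of the Weierstrass convergence theorem.
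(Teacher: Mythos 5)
Your proposal is correct and follows essentially the same route as the paper: the key step in both is to drop the trivial bound $|m_{K,p}|\le 1$ (which fails off the real slice), use the expansion $m_{K,p}=1-\mu+R$ of \eqref{eq3.27}, and deduce $|m_{K,p}-1|\ll_{R,K,\sigma}\log^2p/p^{2\sigma}$ uniformly on polydiscs, so that the partial products converge uniformly there and the limit is holomorphic by Weierstrass. The only stylistic difference is that you package this as joint holomorphy on $\mathbb{C}^2$ and then restrict to slices, whereas the paper fixes one complex variable at a time and proves separate holomorphy directly; the underlying estimate is identical, so this distinction is cosmetic.
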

\begin{proof}
We only consider the case of the first derivative with respect to $u$, since other cases are proved in a similar way. 
Fix a complex number $v$ arbitrarily.
We show that 
\begin{equation}\label{eq4.1}
\prod_{p<y}m_{K,p}(u,v,\sigma)
\end{equation}
converges uniformly in $\{u~;~|u|\leq R\}$ as $y\to\infty$ for any fixed $R>0$. 
As before we express $m_{K,p}(u,v,\sigma)$ as $1-\mu+R$ with
\[\mu=\frac{u^2+v^2}{4}\sum_{m=1}^\infty\frac{\Lambda_K(p^m)^2}{p^{2m\sigma}}\]
and
\[R=\int_0^1\sum_{k=3}^\infty\frac{1}{k!}(iua_p^{(1)}(t)+iva_p^{(2)}(t))^kdt.\]
We see that $\mu$ and $R$ are small for large $p$. 
In fact, $|\mu|$ is less than $1/4$ for $p>p_0$, since $|\mu|\ll_v(dR\log p~p^{-\sigma})^2$,  here $p_0$ does not depend on $u$.
The argument for $R$ is similar. 
Then we can define $\log m_{K,p}(u,v,\sigma)$ for $p>p_0$.
Hence we have 
\[|\log m_{K,p}(u,v,\sigma)|\leq Ad^2R^2\frac{\log^2p}{p^{2\sigma}},\]
here $A$ is a positive constant that depends only on $v$. 
The summation of the right hand side taken over $p$ converges, therefore the product  \eqref{eq4.1} uniformly converges for $u$ as $y$ tends to infinity. 
This proves that $m_K(u,v,\sigma)$ is an entire function of $u$.
\end{proof}

By Lemma \ref{lem4.1}, applying Cauchy's integral formula to $m_K(u,v,\sigma)$,  then Proposition \ref{prop2.2} comes down to the following lemma. 

\begin{lem}\label{lem4.2}
Let $\sigma_1$ be a large fixed positive real number, and suppose 
$\sigma\in(1/2,\sigma_1]$.
Then there exists a positive real number $J_K(\sigma)$ such that 
for all $\alpha,\beta\in\mathbb{R}$ with $|\alpha|+|\beta|\geq J_K(\sigma)$, we have
\[|m_{K}(u,v,\sigma)|
\leq\exp\Big(-\frac{c_K(\sigma_1)}{2\sigma-1}(|\alpha|+|\beta|)^{\frac{1}{\sigma}}
(\log(|\alpha|+|\beta|))^{\frac{1}{\sigma}-1}\Big)\]
for all $(u,v)\in D(\alpha,1/2)\times D(\beta,1/2)$,
where $D(c,r)$ is the disc on $\mathbb{C}$ with center $c$ and radius $r$. 
The constant $J_K(\sigma)$ depends only on $K$ and $\sigma$, and the constant $c_K(\sigma_1)>0$ depends only on $K$ and $\sigma_1$. 
\end{lem}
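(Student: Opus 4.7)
The plan is to bound $|m_K(u,v,\sigma)| = \prod_p |m_{K,p}(u,v,\sigma)|$ by extracting essentially all the savings from primes above a threshold $p_0 := C_K(\rho\log\rho)^{1/\sigma}$, where $\rho := |\alpha|+|\beta|$ and $C_K$ is a sufficiently large constant. Using the Taylor expansion of the exponential inside the integral defining $m_{K,p}$, valid also for complex $u,v$, one has
\[m_{K,p}(u,v,\sigma) = 1 - \mu_p + R_p,\qquad \mu_p = \frac{u^2+v^2}{4}\sum_{m\geq 1}\frac{\Lambda_K(p^m)^2}{p^{2m\sigma}},\]
exactly as in the proof of Lemma~\ref{lem3.11}, with remainder controlled by $|R_p| \ll |\mu_p|^{3/2}$ as long as $|\mu_p| \leq 1/4$. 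Since $\alpha,\beta$ are real and $(u,v)\in D(\alpha,1/2)\times D(\beta,1/2)$, one has $\alpha^2+\beta^2 \geq \rho^2/2$, hence $\Re(u^2+v^2)\gg\rho^2$ and $|\Im(u^2+v^2)|\ll\rho$ once $\rho\geq J_K(\sigma)$; so $\arg(\mu_p) = O(1/\rho)$ and $\Re(\mu_p) \gg_d \rho^2\,\Lambda_K(p)^2/p^{2\sigma}$. Consequently
\[\log|m_{K,p}(u,v,\sigma)| \;\leq\; -\tfrac{1}{2}\Re(\mu_p) \;\leq\; -c_d\,\rho^2\,\frac{\Lambda_K(p)^2}{p^{2\sigma}} \qquad (p\geq p_0),\]
the choice of $C_K$ ensuring Taylor validity uniformly on this range. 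For primes $p<p_0$ the crude bound $|m_{K,p}|\leq \exp(C_d\log p/p^\sigma)$, obtained by moving the modulus inside the integral, contributes to $\log|m_K|$ at most $O_K(p_0^{1-\sigma}/|1-\sigma|)$ (with the obvious adjustment at $\sigma=1$), which is strictly smaller than the main savings below.

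The heart of the lemma is therefore the Dirichlet-series lower bound
\[\sum_{p\geq p_0}\frac{\Lambda_K(p)^2}{p^{2\sigma}} \;\gg_K\; \frac{\rho^{1/\sigma - 2}(\log\rho)^{1/\sigma-1}}{2\sigma-1},\]
which, multiplied by $\rho^2$, produces the claimed exponent. This is the step where the Chebotarev density theorem becomes essential. For $K=\mathbb{Q}$ one has $\Lambda_K(p) = \log p$ identically and the bound follows from the prime number theorem by partial summation; in general, however, $\Lambda_K(p)$ can vanish at inert primes, and I would restrict the sum to rational primes totally split in the Galois closure $\tilde K/\mathbb{Q}$, for which $\Lambda_K(p) = d\log p$. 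Chebotarev guarantees that their counting function is $\gg_K x/\log x$ for $x\geq x_0(K)$, and partial summation then gives
\[\sum_{\substack{p\geq p_0 \\ p\text{ totally split in }\tilde K}}\frac{\log^2 p}{p^{2\sigma}} \;\asymp_K\; \int_{p_0}^{\infty}\frac{\log t}{t^{2\sigma}}\,dt \;=\; \frac{\log p_0}{(2\sigma-1)\,p_0^{2\sigma-1}} + \frac{1}{(2\sigma-1)^2\,p_0^{2\sigma-1}}.\]
Substituting $p_0 = C_K(\rho\log\rho)^{1/\sigma}$ and $\log p_0 \asymp \sigma^{-1}\log\rho$ then recovers the desired lower bound.

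Combining the two steps and choosing $J_K(\sigma)$ large enough to legitimize all threshold conditions (Taylor validity, Chebotarev giving the stated count, domination of the $p<p_0$ contribution) completes the proof. The main obstacle is the Chebotarev step with the \emph{correct} dependences: the lower bound must preserve both the factor $(2\sigma-1)^{-1}$, which blows up as $\sigma \to 1/2^+$, and the factor $(\log\rho)^{1/\sigma-1}$, which is a negative power of $\log\rho$ when $\sigma>1$ and therefore demands a genuine rather than a logarithmically soft lower bound on the totally-split-prime count. A secondary technical point, which I expect to be routine once $p_0$ is fixed, is verifying the Taylor remainder estimate $|R_p|\ll|\mu_p|^{3/2}$ uniformly for complex $(u,v)$ in the prescribed bidisks; this in turn rests on the observation that $\arg(u^2+v^2)\to 0$ as $\rho\to\infty$.
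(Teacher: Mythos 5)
Your proof is correct and follows essentially the same route as the paper: split the Euler product at a threshold $P_0 \asymp (\rho\log\rho)^{1/\sigma}$, use the crude bound on the small primes, Taylor-expand $m_{K,p}=1-\mu_p+R_p$ for $p>P_0$ to get decay from $\Re\mu_p$, and lower-bound $\sum_{p\geq P_0}\Lambda_K(p)^2/p^{2\sigma}$ via the Chebotarev density theorem applied to completely split primes together with partial summation. The paper's Lemmas 4.3 and 4.4 carry out exactly this decomposition and bookkeeping, with the same dependences on $(2\sigma-1)^{-1}$ and $(\log\rho)^{1/\sigma-1}$ emerging from the same choice of threshold.
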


\vspace{3mm}

For given constants $\alpha$, $\beta$, and $c_0=c_0(K)$, let  
\[P_0=\Big(\frac{|\alpha|+|\beta|}{c_0}\log\frac{|\alpha|+|\beta|}{c_0}\Big)
^{\frac{1}{\sigma}},\]
and define
\[m_{K}^{(1)}(u,v,\sigma)=\prod_{p\leq P_0}m_{K,p}(u,v,\sigma)\]
and
\[m_{K}^{(2)}(u,v,\sigma)=\prod_{p>P_0}m_{K,p}(u,v,\sigma).\]

At first, we consider the estimate of $m_{K}^{(1)}(u,v,\sigma)$. 
\begin{lem}\label{lem4.3}
Let $\sigma_1$ be a large fixed positive real number, and suppose 
$\sigma\in(1/2,\sigma_1]$.
Then there exists a positive real number $J_K^{(1)}(\sigma_1)$ such that 
for all $\alpha,\beta\in\mathbb{R}$ with $|\alpha|+|\beta|\geq J_K^{(1)}(\sigma_1)$,
\[|m_{K}^{(1)}(u,v,\sigma)|\leq\exp(A_K(|\alpha|+|\beta|)^{\frac{3}{4}\frac{1}{\sigma}})\]
for all $(u,v)\in D(\alpha,1/2)\times D(\beta,1/2)$.
Here the constant $J_K^{(1)}(\sigma_1)$ depends only on $K$ and $\sigma_1$, and the constant $A_K>0$ depends only on $K$.
\end{lem}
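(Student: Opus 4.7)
The plan is to reduce the lemma to a per-prime bound on $|m_{K,p}(u,v,\sigma)|$ obtained by exploiting that $\alpha,\beta$ are real, so that $\mathrm{Im}(u)$ and $\mathrm{Im}(v)$ are small, and then to multiply these bounds over $p\leq P_0$ and verify that the resulting exponent matches $\tfrac{3}{4\sigma}$.

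\emph{Per-prime bound.} For $(u,v)\in D(\alpha,1/2)\times D(\beta,1/2)$ with $\alpha,\beta\in\mathbb{R}$, write $u=u_1+iu_2$, $v=v_1+iv_2$ with $|u_2|,|v_2|\leq 1/2$. From \eqref{eq3.24}, the triangle inequality gives
\[
|m_{K,p}(u,v,\sigma)|\leq\int_0^1\exp\bigl(-u_2a_p^{(1)}(t)-v_2a_p^{(2)}(t)\bigr)dt
\leq\exp\Bigl(\tfrac{1}{2}\sup_t|a_p^{(1)}(t)|+\tfrac{1}{2}\sup_t|a_p^{(2)}(t)|\Bigr).
\]
Applying the bound $\Lambda_K(p^m)\leq d\log p$ from the fundamental identity to \eqref{eq3.25}--\eqref{eq3.26} yields
\[
|a_p^{(j)}(t)|\leq\sum_{m\geq 1}\frac{d\log p}{p^{m\sigma}}=\frac{d\log p}{p^\sigma-1}\leq\frac{C_d\log p}{p^\sigma}
\]
uniformly for $p\geq 2$ and $\sigma>1/2$, where $C_d$ depends only on $d=[K:\mathbb{Q}]$ (note $p^\sigma-1\geq\sqrt{2}-1$ always). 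Hence $|m_{K,p}(u,v,\sigma)|\leq\exp(C_d\log p/p^\sigma)$.

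\emph{Product and prime sum.} Multiplying over $p\leq P_0$ gives
\[
|m_K^{(1)}(u,v,\sigma)|\leq\exp\Bigl(C_d\sum_{p\leq P_0}\frac{\log p}{p^\sigma}\Bigr).
\]
The key uniform estimate is the elementary bound
\[
\sum_{p\leq x}\frac{\log p}{p^\sigma}\leq\max(1,x^{1-\sigma})\sum_{p\leq x}\frac{\log p}{p}\ll\max(1,x^{1-\sigma})\log x,
\]
valid for all $\sigma>1/2$ with an absolute implied constant, and which crucially avoids the $(1-\sigma)^{-1}$ factor that would appear in the naive integral estimate. Setting $S=|\alpha|+|\beta|$ and noting $P_0\asymp_{c_0}(S\log S)^{1/\sigma}$, this becomes
\[
\sum_{p\leq P_0}\frac{\log p}{p^\sigma}\ll_{c_0}\bigl(1+(S\log S)^{(1-\sigma)/\sigma}\bigr)\log S.
\]

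\emph{Matching the target exponent.} It remains to check that the right-hand side is $\ll S^{3/(4\sigma)}$ for $S$ large, uniformly in $\sigma\in(1/2,\sigma_1]$. Taking logarithms, we need $(1-\sigma)/\sigma+o(1)\leq 3/(4\sigma)$ in the relevant regime, i.e.\ $4(1-\sigma)\leq 3$, i.e.\ $\sigma\geq 1/4$; since $\sigma>1/2$ the strict inequality $(1-\sigma)/\sigma<3/(4\sigma)$ holds with a uniform gap of at least $(4\sigma-1)/(4\sigma)\geq 1/2$ throughout $(1/2,\sigma_1]$. This gap comfortably absorbs the $\log S$ factor and the case $\sigma\geq 1$ where $P_0^{1-\sigma}\leq 1$. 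Choosing $J_K^{(1)}(\sigma_1)$ large enough to make these absorptions effective yields the stated bound with $A_K=A_K(d,\sigma_1,c_0)$. The main concern in this argument is purely one of uniformity in $\sigma$: a naive partial summation produces a harmful $(1-\sigma)^{-1}$ near $\sigma=1$, and the clean workaround is the factorisation $p^{-\sigma}=p^{-1}\cdot p^{1-\sigma}$ used above. No arithmetic input beyond Chebyshev's bound and $\Lambda_K(p^m)\leq d\log p$ is required.
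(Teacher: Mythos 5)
Your proof is correct and follows essentially the same route as the paper: the same reduction to a per-prime bound using that $|\mathfrak{I}u|,|\mathfrak{I}v|\leq 1/2$, the same estimate $|a_p^{(j)}(t)|\ll_d \log p/p^\sigma$, and then a bound for $\sum_{p\leq P_0}\log p/p^\sigma$ followed by an exponent check against $\tfrac{3}{4\sigma}$. The only minor difference is in how that prime sum is controlled uniformly in $\sigma$: the paper simply uses $p^{-\sigma}\leq p^{-1/2}$ to get $\sum_{p\leq P_0}\log p/p^\sigma\ll P_0^{1/2}\log P_0$, which is a little more direct than your factorisation $p^{-\sigma}=p^{-1}\cdot p^{1-\sigma}$ (and already sidesteps the $(1-\sigma)^{-1}$ issue you flag), though both bounds land comfortably within the target $\exp(A_K(|\alpha|+|\beta|)^{3/(4\sigma)})$ once $P_0\asymp(S\log S)^{1/\sigma}$ is substituted.
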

\begin{proof}
We first see that
\[|m_{K,p}(u,v,\sigma)|
\leq\int_0^1\exp\{-\mathfrak{I}(ua_p^{(1)}(t)+va_p^{(2)}(t))\}dt.\]
By the conditions for $u$ and $v$, the absolute values of their imaginary parts are less than $1/2$. 
Hence we have  
\[|-\mathfrak{I}(ua_p^{(1)}(t)+va_p^{(2)}(t))|\ll\frac{d\log p}{p^\sigma}\]
with an absolute implied constant. 
Summing up over $p\leq P_0$, we obtain 
\[|m_{K}^{(1)}(u,v,\sigma)|\leq\exp\Big(Ad\sum_{p\leq P_0}\frac{\log p}{p^\sigma}\Big)
\leq\exp(A_1dP_0^{\frac{1}{2}}\log P_0).\]
Here $A$ and $A_1>0$ are absolute constants. 
With the choice of $P_0$, the desired estimate follows if $|\alpha|+|\beta|\geq J_K^{(1)}(\sigma_1)$. 
\end {proof}

\vspace{3mm}

The estimate for $m_{K}^{(2)}(u,v,\sigma)$ requires more work than $m_{K}^{(1)}(u,v,\sigma)$.
\begin{lem}\label{lem4.4}
Let $\sigma_1$ be a large fixed positive real number, and suppose $\sigma\in(1/2,\sigma_1]$.
Then there exists a positive real number $J_K^{(2)}(\sigma)$ such that for all $\alpha,\beta\in\mathbb{R}$ with $|\alpha|+|\beta|\geq J_K^{(2)}(\sigma)$, 
\[|m_{K}^{(2)}(u,v,\sigma)|
\leq\exp\Big(-\frac{c_K(\sigma_1)}{2\sigma-1}(|\alpha|+|\beta|)^{\frac{1}{\sigma}}
(\log(|\alpha|+|\beta|))^{\frac{1}{\sigma}-1}\Big)\]
for all $(u,v)\in D(\alpha,1/2)\times D(\beta,1/2)$.
The constant $J_K^{(2)}(\sigma)$ depends only on $K$ and $\sigma$, and the constant $c_K(\sigma_1)>0$ depends only on $K$ and $\sigma_1$.
\end{lem}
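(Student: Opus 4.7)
The plan is to follow Guo's strategy \cite{Guo1} for the analogous Riemann zeta estimate, with one crucial adaptation: the Chebotarev density theorem will replace the elementary prime counting input that suffices over $\mathbb{Q}$. For $p > P_0 \asymp ((|\alpha|+|\beta|)\log(|\alpha|+|\beta|))^{1/\sigma}$ the quantity $(|u|+|v|)d(\log p)/p^\sigma$ is uniformly small on the bidisc $D(\alpha,1/2)\times D(\beta,1/2)$, so the quadratic Taylor expansion carried out in the proof of Lemma \ref{lem3.11} remains valid here with $u,v$ complex. Combined with the orthogonality identities for $a_p^{(1)},a_p^{(2)}$ recorded there, this gives
\[ m_{K,p}(u,v,\sigma) = 1 - \frac{u^2+v^2}{4}\sum_{m\ge 1}\frac{\Lambda_K(p^m)^2}{p^{2m\sigma}} + R_p, \]
with a cubic remainder $R_p$ whose sum over $p > P_0$ is negligible by the bound $\Lambda_K(n)\le d\Lambda(n)$ and partial summation.

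Next, writing $u=\alpha+i\varepsilon_1$ and $v=\beta+i\varepsilon_2$ with $|\varepsilon_j|\le 1/2$, one has $\mathfrak{R}(u^2+v^2)\ge \alpha^2+\beta^2-1/2$, whence
\[ |m_{K,p}(u,v,\sigma)|^2 \le 1 - c_1(\alpha^2+\beta^2)\frac{\Lambda_K(p)^2}{p^{2\sigma}} + \mathcal{E}_p \]
for a positive constant $c_1$ and an error $\mathcal{E}_p$ absorbing the $m\ge 2$ contributions and the cubic tail. Applying $\log(1-x)\le -x$ term by term and multiplying over $p > P_0$ yields
\[ \log|m_K^{(2)}(u,v,\sigma)|^2 \le -c_1(\alpha^2+\beta^2)\,S(P_0) + O_{K,\sigma_1}(1), \]
where $S(P_0) := \sum_{p > P_0}\Lambda_K(p)^2/p^{2\sigma}$.

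The main obstacle is producing a lower bound of the correct order for $S(P_0)$, since $\Lambda_K(p)=(\log p)\cdot\#\{\mathfrak{p}\mid p : f(\mathfrak{p},p)=1\}$ can vanish along a thin set of primes having no degree-one prime above them. Let $L$ be the Galois closure of $K/\mathbb{Q}$, $G=\mathrm{Gal}(L/\mathbb{Q})$ and $H=\mathrm{Gal}(L/K)$; an unramified rational prime $p$ admits at least one degree-one prime of $K$ above it if and only if its Frobenius class in $G$ meets $H$. This union of conjugacy classes is nonempty because it contains the identity, so the Chebotarev density theorem produces a set $\mathcal{P}_K$ of positive Dirichlet density on which $\Lambda_K(p)\ge\log p$. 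Partial summation against the asymptotic $\#\{p\le x : p\in\mathcal{P}_K\} \gg_K x/\log x$ then gives
\[ S(P_0) \ge \sum_{\substack{p>P_0 \\ p\in\mathcal{P}_K}}\frac{(\log p)^2}{p^{2\sigma}} \gg_K \frac{P_0^{1-2\sigma}\log P_0}{2\sigma-1} \]
uniformly for $\sigma\in(1/2,\sigma_1]$. With the specified choice of $P_0$, a direct computation shows
\[ (\alpha^2+\beta^2)\,\frac{P_0^{1-2\sigma}\log P_0}{2\sigma-1} \asymp \frac{(|\alpha|+|\beta|)^{1/\sigma}(\log(|\alpha|+|\beta|))^{1/\sigma-1}}{2\sigma-1}, \]
which, combined with the earlier steps, produces the claimed bound as soon as $|\alpha|+|\beta|$ exceeds a threshold $J_K^{(2)}(\sigma)$ large enough to justify the Taylor expansion.
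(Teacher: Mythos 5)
Your proof is correct and follows essentially the same strategy as the paper: Taylor-expand $\log m_{K,p}$ for $p>P_0$, reduce the estimate to a lower bound for $\sum_{p>P_0}\Lambda_K(p^m)^2/p^{2m\sigma}$, and obtain that lower bound via the Chebotarev density theorem. The only noteworthy variation is in the Chebotarev step itself: the paper restricts to primes that split completely in $K$ (so $\Lambda_K(p^m)=d\log p$ for every $m$, and the density is explicitly $1/[L:\mathbb{Q}]$), whereas you restrict to the larger set of primes having at least one degree-one prime above (so only $\Lambda_K(p)\ge\log p$ is guaranteed, and you keep only the $m=1$ term); both give a positive-density set of primes and lead to the same order of lower bound, so the difference is cosmetic.
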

\begin{proof}
As before we express $m_{K,p}(u,v,\sigma)$ as $1-\mu+R$. 
We want to define $\log m_{K,p}(u,v,\sigma)$ again.
We let $\epsilon>0$ sufficiently small, and we assume $|\alpha|+|\beta|$ is greater than some positive absolute constants.
For $p\geq P_0$, we have
\begin{equation}\label{eq4.2}
\frac{(|\alpha|+|\beta|)d\log p}{p^{\sigma}}
\leq\frac{(|\alpha|+|\beta|)d\log P_0}{P_0^{\sigma}}, 
\end{equation}
hence \eqref{eq4.2} is less than $\epsilon$ if we choose $c_0=c_0(K)$ appropriately.
Then $|\mu|$ and $|R|$ are less than 1/4, and $\log m_{K,p}(u,v,\sigma)$ is defined and calculated as  
\[\log m_{K,p}(u,v,\sigma)=-\mu+R+O(|\mu|^2+|R|^2).\]
We first replace $\mu$ with the real number  
\[\frac{\alpha^2+\beta^2}{4}\sum_{m=1}^\infty
\frac{\Lambda_K(p^m)^2}{p^{2m\sigma}}.\]
The error of this replacement is estimated as
\begin{align*}
&\Big|\frac{u^2+v^2}{4}\sum_{m=1}^\infty
\frac{\Lambda_K(p^m)^2}{p^{2m\sigma}}
-\frac{\alpha^2+\beta^2}{4}\sum_{m=1}^\infty
\frac{\Lambda_K(p^m)^2}{p^{2m\sigma}}\Big|\\
&\hspace{15mm}\ll(|\alpha|+|\beta|)\sum_{m=1}^\infty\frac{\Lambda_K(p^m)^2}{p^{2m\sigma}} 
\end{align*}
with an absolute implied constant, since $(u,v)\in D(\alpha,1/2)\times D(\beta,1/2)$. 
For the estimate on $R$, we have
\[|R|\ll(|u|+|v|)^3\Big(\sum_{m=1}^\infty\frac{\Lambda_K(p^m)}{p^{m\sigma}}\Big)^3
\ll(|\alpha|+|\beta|)^3\frac{d^3\log^3p}{p^{3\sigma}}\]
for $p\geq P_0$, where all the implied constants are absolute. 
We next estimate $|\mu|^2$. 
Since $(|\alpha|+|\beta|)d\log P_0/P_0^\sigma$ is sufficiently small, we also have 
\begin{align*}
|\mu|^2&\ll(|u|+|v|)^4\frac{d^4\log^4p}{p^{4\sigma}}
\leq(|\alpha|+|\beta|)\frac{d\log P_0}{P_0^\sigma}(|\alpha|+|\beta|)^3\frac{d^3\log^3p}{p^{3\sigma}}\\
&\leq(|\alpha|+|\beta|)^3\frac{d^3\log^3p}{p^{3\sigma}}
\end{align*}
with an absolute implied constant. 
Furthermore, we clearly see that 
\[|R|^2\leq\frac{|R|}{4}\ll(|\alpha|+|\beta|)^3\frac{d^3\log^3p}{p^{3\sigma}}.\]
By the above estimates, we obtain 
\begin{align}\label{eq4.3}
&\left|\log m_{K,p}(u,v,\sigma)+\frac{\alpha^2+\beta^2}{4}
\sum_{m=1}^\infty\frac{\Lambda_K(p^m)^2}{p^{2m\sigma}}\right|\nonumber\\
&\leq\Big|\frac{\alpha^2+\beta^2}{4}\sum_{m=1}^\infty
\frac{\Lambda_K(p^m)^2}{p^{2m\sigma}}-\mu\Big|
+\Big|\log m_{K,p}(u,v,\sigma)+\mu\Big|\nonumber\\
&\leq A(|\alpha|+|\beta|)
\sum_{m=1}^\infty\frac{\Lambda_K(p^m)^2}{p^{2m\sigma}}
+A(|\alpha|+|\beta|)^3\frac{d^3\log^3p}{p^{3\sigma}}, 
\end{align}
where $A>0$ is an absolute constant.
We assume that $|\alpha|+|\beta|$ is greater than some absolute constants. 
Then the first term of \eqref{eq4.3} is less than 
\[\frac{(|\alpha|+|\beta|)^2}{16}\sum_{m=1}^\infty\frac{\Lambda_K(p^m)^2}{p^{2m\sigma}}.\]
As a result, we obtain  
\begin{align*}
\mathfrak{R}\log m_{K,p}(u,v,\sigma)
&\leq-\frac{\alpha^2+\beta^2}{4}\sum_{m=1}^\infty\frac{\Lambda_K(p^m)^2}{p^{2m\sigma}}
+\frac{(|\alpha|+|\beta|)^2}{16}\sum_{m=1}^\infty\frac{\Lambda_K(p^m)^2}{p^{2m\sigma}}\nonumber\\
&\hspace{4.5cm}+A(|\alpha|+|\beta|)^3\frac{d^3\log^3p}{p^{3\sigma}}\nonumber\\
&\leq-\frac{(|\alpha|+|\beta|)^2}{16}
\sum_{m=1}^\infty\frac{\Lambda_K(p^m)^2}{p^{2m\sigma}}
+A(|\alpha|+|\beta|)^3\frac{d^3\log^3p}{p^{3\sigma}}.
\end{align*}
Hence we have 
\begin{align}
|m_K^{(2)}(u,v,\sigma)|\label{eq4.4}
&=\exp\Big(\sum_{p\geq P_0}\mathfrak{R}\log m_{K,p}(u,v,\sigma)\Big)\\
&\leq\exp\Big(-\frac{(|\alpha|+|\beta|)^2}{16}
\sum_{p\geq P_0}\sum_{m=1}^\infty\frac{\Lambda_K(p^m)^2}{p^{2m\sigma}}\nonumber\\
&\hspace{3cm}+A(|\alpha|+|\beta|)^3\sum_{p\geq P_0}\frac{d^3\log^3p}{p^{3\sigma}}\Big)\nonumber. 
\end{align}
Then, we consider the lower bound on the following Dirichlet series
\[\sum_{p\geq P_0}\sum_{m=1}^\infty\frac{\Lambda_K(p^m)^2}{p^{2m\sigma}}.\]
For a rational prime number $p$, we write the prime ideal factorization of $p$ as 
\[p=\mathfrak{p}_1^{e(\mathfrak{p}_1,p)}
\cdots\mathfrak{p}_{g(p)}^{e(\mathfrak{p}_{g(p)},p)},\]
with $N(\mathfrak{p}_j)=p^{f(\mathfrak{p}_j,p)}$ for $j=1, \ldots, g(p)$. 
Suppose that $p$ is any rational prime that completely splits in $K$. 
Then the coefficient $\Lambda_K(p^m)$ is equal to $d\log p$ for any $m$ by the formula  \eqref{eq2.3}.
Thus we have 
\[\sum_{p\geq P_0}\sum_{m=1}^\infty\frac{\Lambda_K(p^m)^2}{p^{2m\sigma}}
\geq\sideset{}{'}\sum_{p\geq P_0}\sum_{m=1}^\infty
\frac{d^2\log^2p}{p^{2m\sigma}}
\geq\sideset{}{'}\sum_{p\geq P_0}\frac{d^2\log^2p}{p^{2\sigma}},\]
where $\sum'$ means the summation restricted to primes that completely split in $K$. 
The Chebotarev density theorem gives 
\[\frac{\#\{p\leq x~|~\text{$p$ is a prime number that completely splits in $K$}\}}
{\#\{p\leq x~|~\text{$p$ is a prime number}\}}
\to\frac{1}{[L:\mathbb{Q}]},\]
as $x\to\infty$, where $L$ denotes the Galois closure of $K/\mathbb{Q}$. 
Then we obtain  
\begin{equation}\label{eq4.5}
\sideset{}{'}\sum_{p\geq M}\frac{\log^2 p}{p^{2\sigma}}
\geq\frac{1}{[L:\mathbb{Q}]}\frac{1}{2(2\sigma-1)}M^{1-2\sigma}\log M
\end{equation}
for $M\geq M_0(\sigma_1)$ by the Prime Number Theorem, where the constant $M_0(\sigma_1)>0$ depends only on $\sigma_1$.
Next we consider the term 
\[A(|\alpha|+|\beta|)^3\sum_{p\geq P_0}\frac{d^3\log^3p}{p^{3\sigma}}\]
in \eqref{eq4.4}.
At first, we calculate this term as 
\[\leq Ad^2\frac{(|\alpha|+|\beta|)d\log P_0}{P_0^\sigma}
(|\alpha|+|\beta|)^2\sum_{p\geq P_0}\frac{\log^2p}{p^{2\sigma}}.\]
We replace the constant $c_0=c_0(K)$ smaller so that 
\[Ad^2\frac{(|\alpha|+|\beta|)d\log P_0}{P_0^\sigma}\leq\frac{1}{128}\frac{1}{[L:\mathbb{Q}]}.\]
By the Prime Number Theorem, we also obtain
\begin{equation}\label{eq4.6}
\sum_{p\geq M}\frac{\log^2p}{p^{2\sigma}}
\leq\frac{2}{2\sigma-1}M^{1-2\sigma}\log M
\end{equation}
for $M\geq M'_0(\sigma)$, where the constant $M'_0(\sigma)>0$ depends only on $\sigma$. 

We assume $|\alpha|+|\beta|\geq J_K^{(2)}(\sigma)$ for some positive constant $K^{(2)}(\sigma)$. 
By the estimates \eqref{eq4.4}, \eqref{eq4.5}, and \eqref{eq4.6}, applying the choice of $P_0$, we complete the proof of the present lemma. 
\end{proof}

\begin{proof}[Proof of Lemma \ref{lem4.2}]
This is a direct consequence of Lemmas \ref{lem4.3} and \ref{lem4.4}.
\end{proof}

\section{Proof of the Theorem \ref{thm2.3}}\label{sec5}
By Proposition \ref{prop2.2}, the function $m_K(u,v,\sigma)$ is integrable over $\mathbb{C}$ as a function of $w=u+iv$, then we define 
\[M_{K,\sigma}(z)=\int_{\mathbb{C}}m_K(u,v,\sigma)\psi_{-z}(w)|dw| .\]
Moreover, the function $m_K(u,v,\sigma)$ belongs to the class $\Lambda$ introduced in $\S\S$\ref{sec2.2}, hence we have 
\[\tilde{M}_{K,\sigma}(w)=\int_\mathbb{C}M_{K,\sigma}(z)\psi_{w}(z)|dz|=m_K(u,v,\sigma)\]
for $w=u+iv$.  
By the obvious equation $\tilde{M}_{K,\sigma}(-w)=\overline{\tilde{M}_{K,\sigma}(w)}$ from the definition, we find that $M_{K,\sigma}(z)$ is real valued. 
In fact, we have 
\begin{align*}
\overline{M_{K,\sigma}(z)}
&=\int_{\mathbb{C}}\overline{\tilde{M}_{K,\sigma}(w)}\psi_{z}(w)|dw|
=\int_{\mathbb{C}}\tilde{M}_{K,\sigma}(-w)\psi_{z}(w)|dw|\nonumber\\
&=\int_{\mathbb{C}}\tilde{M}_{K,\sigma}(w)\psi_{z}(-w)|dw|
=M_{K,\sigma}(z).
\end{align*}
Then we prove Theorem \ref{thm2.3}. 
By the assumption on the test function $\Phi$, we have the inversion formula
\[\Phi(z)=\int_{\mathbb{C}}\tilde{\Phi}(w)\psi_{-w}(z)|dw|.\]
Hence the integral 
\[\frac{1}{T}\int_0^T \Phi\Big(\frac{\zeta'_K}{\zeta_K}(\sigma+it)\Big)dt\]
is equal to 
\begin{equation}\label{eq5.1}
\int_{|w|\leq A}\tilde{\Phi}(w)\frac{1}{T}\int_0^T
\psi_{-w}\Big(\frac{\zeta'_K}{\zeta_K}(\sigma+it)\Big)dt|dw|
+O\Big(\int_{|w|>A}|\tilde{\Phi}(w)|~|dw|\Big), 
\end{equation}
where $A=(\log T)^\delta$. 
By Proposition \ref{prop2.1}, the first term of \eqref{eq5.1} is equal to 
\[\int_{|w|\leq A}\tilde{\Phi}(w)\tilde{M}_{K,\sigma}(-w)|dw|
+O_{K,\sigma_1}\Big(\exp(-c_K(\log T)^{\frac{2}{3}\theta})
\int_{|w|\leq A}|\Phi(w)|~|dw|\Big). \]
By the fact $|\tilde{M}_{K,\sigma}(-w)|\leq1$ coming from the definition, the error when we replace the integral over $|w|\leq(\log T)^\delta$ with the integral over $\mathbb{C}$ is
\[\ll\int_{|w|>A}|\tilde{\Phi}(w)|~|dw|.\]
Hence the remainder work is investigating the integral 
\begin{equation}\label{eq5.2}
\int_\mathbb{C}\tilde{\Phi}(w)\tilde{M}_{K,\sigma}(-w)|dw|. 
\end{equation}
We replace $\tilde{M}_{K,\sigma}(-w)$ with $\overline{\tilde{M}_{K,\sigma}(w)}$ and apply Parseval's formula, then the integral \eqref{eq5.2} is equal to 
\[\int_{\mathbb{C}}\Phi(z)\overline{M_{K,\sigma}(z)}|dz|=\int_{\mathbb{C}}\Phi(z)M_{K,\sigma}(z)|dz|.\]
Therefore, we complicate the proof of Theorem \ref{thm2.3}. \qed

\section{Further results on $M$-functions by Guo's method}\label{sec6}
According to Theorem \ref{thm2.3}, the function $M_{K,\sigma}(z)$ is regarded as the density function corresponding to the value-distribution of $(\zeta'_K/\zeta_K)(\sigma+it)$. 
Such a density function is sometimes called ``$M$-function'' since the paper of Ihara \cite{Ihara}. 
As we have seen, Theorem \ref{thm2.3} is deduced from Proposition \ref{prop2.1} and \ref{prop2.2}. 
Hence, if we have the analogue of these propositions for a given zeta or $L$-function, then it is possible to prove the existence of the $M$-function on such a zeta or $L$-function. 

For $L$-functions with some nice properties, the result corresponding to Proposition \ref{prop2.1} holds if we suppose a result analogous to Lemma \ref{lem3.4}. 
Hence the main difficulty should exist in the poof of Proposition \ref{prop2.2}. 
See the proof of Lemma \ref{lem4.4}. 
There, we investigated the lower bound of 
\begin{equation}\label{eq6.1}
\sum_{p\geq M}\sum_{m=1}^\infty\frac{\Lambda_K(p^m)^2}{p^{2m\sigma}}.
\end{equation}
In general, we should consider 
\begin{equation}\label{eq6.2}
\sum_{p\geq M}\sum_{m=1}^\infty\frac{|\Lambda_\ast(p^m)|^2}{p^{2m\sigma}},
\end{equation}
where $\Lambda_\ast(n)$ is the coefficient of $n^{-s}$ in the Dirichlet series defining the logarithmic derivative of a zeta or $L$-function. 
The reason why $\Lambda_K(p^m)$ is replaced by $|\Lambda_\ast(p^m)|$ comes from the alternation of the functions $a_p^{(1)}(t)$ and $a_p^{(2)}(t)$ in \eqref{eq3.25} and \eqref{eq3.26}, which should become more complicated unless $\Lambda_\ast(p^m)\in\mathbb{R}$:  
\[a_p^{(1)}(t)=-\sum_{m=1}^\infty\frac{\mathfrak{R}\Lambda_\ast(p^m)}{p^{m\sigma}}\cos(2\pi mt)+\sum_{m=1}^\infty\frac{\mathfrak{I}\Lambda_\ast(p^m)}{p^{m\sigma}}\sin(2\pi mt)\]
and
\[a_p^{(2)}(t)=-\sum_{m=1}^\infty\frac{\mathfrak{R}\Lambda_\ast(p^m)}{p^{m\sigma}}\sin(2\pi mt)-\sum_{m=1}^\infty\frac{\mathfrak{I}\Lambda_\ast(p^m)}{p^{m\sigma}}\cos(2\pi mt).\]

The general method seeking for the lower bound \eqref{eq6.2} is unlikely to exist. 
There is just an individual case in which we can obtain an available lower bound, and the easiest case is the Dirichlet $L$-functions on $\mathbb{Q}$. 
Let $\chi$ be a primitive Dirichlet character of conductor $q$. 
In this case, we have $\Lambda_\ast(p^m)=\Lambda_\chi(p^m)=\chi(p^m)\Lambda(p^m)$, then $|\Lambda_\chi(p^m)|=\log p$ for almost all $p$. 
Then we have the same lower bound as \eqref{eq6.1} in the case $K=\mathbb{Q}$.

Another case in which a similar argument remains possible is $L$-functions attached to cusp forms. 
Let $f$ be a primitive cusp form of weight $k$ with respect to the modular group $SL_2(\mathbb{Z})$ with the Fourier expansion
\[f(z)=\sum_{n=1}^\infty \lambda_f(n)n^{(k-1)/2}e^{2\pi inz}. \]
It is notable that an analogue of Lemma \ref{lem3.4} has been shown by Luo \cite{Luo}, that is 
\[N_f(\sigma,T)\ll T^{1-\frac{1}{72}(\sigma-\frac{1}{2})}\log T\]
uniformly for $1/2\leq\sigma\leq1$, where $N_f(\sigma,T)$ denotes the number of zeros $\rho=\beta+i\gamma$ of the $L$-function $L(s,f)$ with $\beta\geq\sigma$ and $0\leq\gamma\leq T$. 
In this case, we have $\Lambda_\ast(p^m)=\Lambda_f(p^m)=(\alpha_f(p)^m+\beta_f(p)^m)\log p$ for almost all $p$, where the parameters $\alpha_f(p)$ and $\beta_f(p)$ satisfy $\alpha_f(p)+\beta_f(p)=\lambda_f(p)$ and $\alpha_f(p)\beta_f(p)=1$. 
Hence we have 
\[\sum_{p\geq M}\sum_{m=1}^\infty\frac{\Lambda_f(p^m)^2}{p^{2m\sigma}}
\geq\sideset{}{'}\sum_{p\geq M}\sum_{m=1}^\infty
\frac{(\alpha_f(p)^m+\beta_f(p)^m)^2\log^2p}{p^{2m\sigma}}
\geq\sideset{}{'}\sum_{p\geq M}\frac{\log^2p}{p^{2\sigma}},\]
where $\sum'$ means the summation restricted to primes that satisfy 
$\lambda_f(p)\geq1$. 
Next we investigate the distribution of values of $\lambda_f(p)$. 
Murty \cite[Theorem 4.]{Murty} proved that for any $\epsilon>0$, there exists a set of prime numbers of positive density, such that $\lambda_f(p)>\sqrt{2}-\epsilon$ holds. 
Therefore, we obtain the lower bound similar to \eqref{eq4.5} such that 
\[\sideset{}{'}\sum_{p\geq M}\frac{\log^2p}{p^{2\sigma}}
\geq\frac{c}{2\sigma-1}M^{1-2\sigma}\log M.\] 
for $M\geq M_0(\sigma_1)$ with some positive constant $c$. 

In these two cases, the analogue of Theorem \ref{thm2.3} does holds. 
Let $\ast$ mean a primitive Dirichlet character $\chi$ or a primitive cusp form $f$. 
We define  
\[M_{\ast,\sigma}(z)=\int_{\mathbb{C}}m_\ast(u,v,\sigma)\psi_{-z}(w)|dw| \]
for $\sigma>1/2$, where
\[m_\ast(u,v,\sigma)=\prod_p\int_0^1\psi_w\Big(-\sum_{m=1}^\infty
\frac{\Lambda_\ast(p^m)}{p^{m\sigma}}e^{2\pi imt}\Big)dt .\]
Then the function $M_{\ast,\sigma}(z)$ satisfies the following statement: 

\begin{thm}\label{thm6.1}
Let $\sigma_1$ be a large fixed positive real number.
Let $\theta$ and $\delta$ be two positive real numbers with $\delta+\theta<1/2$.
Then there exists a positive real number 
$T_0=T_0(\theta,\delta,\sigma_1,\ast)$ such that for all $T\geq T_0$ 
and for all $\sigma$ in the interval $[1/2+(\log T)^{-\theta},\sigma_1]$, 
we have 
\[\frac{1}{T}\int_0^T \Phi\Big(\frac{L'}{L}(\sigma+it,\ast)\Big)dt
=\int_{\mathbb{C}}\Phi(z)M_{\ast,\sigma}(z)|dz|+E\]
for any $\Phi\in\Lambda$, 
where 
\[E\ll_{\ast,\sigma_1}\exp\big(-\frac{1}{4}(\log T)^{\frac{2}{3}\theta}\big)
\int_{|w|\leq(\log T)^\delta}|\tilde{\Phi}(w)|~|dw|+\int_{|w|>(\log T)^\delta}|
\tilde{\Phi}(w)|~|dw|.\]
The implied constant 
depends only on $\ast$ and $\sigma_1$. 
\end{thm}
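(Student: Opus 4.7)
The plan is to imitate the proof of Theorem \ref{thm2.3} by establishing the two parallel analogues of Proposition \ref{prop2.1} and Proposition \ref{prop2.2} for $L(s,\ast)$, and then deducing the theorem by the Fourier-analytic argument of Section \ref{sec5} verbatim. Concretely, writing $\Lambda_\ast(n)$ for the coefficient in $-(L'/L)(s,\ast)=\sum_n \Lambda_\ast(n)n^{-s}$, one first wants to show that for $w=u+iv$ with $|w|\leq(\log T)^\delta$ and $\sigma\in[1/2+(\log T)^{-\theta},\sigma_1]$,
\[
\frac{1}{T}\int_0^T\psi_w\Big(\frac{L'}{L}(\sigma+it,\ast)\Big)dt
= m_\ast(u,v,\sigma)
+O_{\ast,\sigma_1}\!\Big(\exp\bigl(-\tfrac{1}{4}(\log T)^{\frac{2}{3}\theta}\bigr)\Big),
\]
and separately that $m_\ast(u,v,\sigma)$ extends to a Schwartz-type function on $\mathbb{C}$ with the decay needed to apply Parseval. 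Given these two inputs, Section \ref{sec5} produces Theorem \ref{thm6.1} with no change.

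For the first (Proposition \ref{prop2.1}) analogue, I would run the four-step machinery of \S\ref{sec3}: truncate $(L'/L)(s,\ast)$ by the Selberg-type identity of Lemma \ref{lem3.3}, replace $\Lambda_{\ast,x}$ by $\Lambda_\ast$ using the mean-square estimate \eqref{eq3.16} (which applies verbatim since $|\Lambda_\ast(n)|\ll\Lambda(n)$ in both cases), pass from $[0,T]$ to $[0,R]$ via the moment/Ghosh argument of Lemma \ref{lem3.6}, and finally drop higher prime powers as in Lemma \ref{lem3.8}. The only non-trivial ingredient is the zero-density bound corresponding to Lemma \ref{lem3.4}: for a primitive Dirichlet $\chi$ this is contained in Selberg \cite{Selberg2} (or Fujii \cite{Fujii}) and gives the exponent $1-\tfrac14(\sigma-\tfrac12)$, while for a primitive cusp form $f$ it is supplied by Luo's bound $N_f(\sigma,T)\ll T^{1-\frac{1}{72}(\sigma-\frac12)}\log T$. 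In either case the constant $D_\epsilon$ can be replaced by $1/2$, which is why the interval in Theorem \ref{thm6.1} starts at $1/2+(\log T)^{-\theta}$ and the condition on $\delta,\theta$ is relaxed to $\delta+\theta<1/2$.

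For the second (Proposition \ref{prop2.2}) analogue, I would repeat the split $m_\ast=m_\ast^{(1)}\cdot m_\ast^{(2)}$ at $P_0=((|\alpha|+|\beta|)\log(|\alpha|+|\beta|)/c_0)^{1/\sigma}$. Lemma \ref{lem4.3} carries over as is, with the bound $|a_p^{(i)}(t)|\ll \log p/p^\sigma$ (the displayed expressions for $a_p^{(1)},a_p^{(2)}$ in the excerpt show that the extra imaginary parts cost only an absolute constant). The delicate step is the lower bound on
\[
\sum_{p\geq P_0}\sum_{m=1}^\infty\frac{|\Lambda_\ast(p^m)|^2}{p^{2m\sigma}},
\]
which replaces the Chebotarev input used for \eqref{eq4.5}. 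For $\ast=\chi$ we simply use $|\Lambda_\chi(p^m)|=\log p$ for $p\nmid q$ and invoke the Prime Number Theorem; for $\ast=f$ we restrict the outer sum to the positive-density set of primes furnished by Murty's theorem on which $\lambda_f(p)\geq\sqrt{2}-\epsilon$, so that $|\alpha_f(p)+\beta_f(p)|^2\log^2 p\gg\log^2 p$ and one again obtains a lower bound of order $(2\sigma-1)^{-1}M^{1-2\sigma}\log M$. With this in hand the rest of Lemma \ref{lem4.4} goes through unchanged, and Lemma \ref{lem4.2} follows.

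The hardest step is unambiguously the lower bound for the Dirichlet series in the cusp form case: without Murty's positive-density result, the restricted sum $\sum'_{p\ge M}\log^2 p/p^{2\sigma}$ could degenerate when many $\lambda_f(p)$ are small, and the whole Fourier-decay estimate would break. Once that ingredient is in place, the passage from the two propositions to Theorem \ref{thm6.1} is purely formal, reproducing the calculation in Section \ref{sec5}: expand $\Phi$ by Fourier inversion, substitute the Proposition \ref{prop2.1} analogue on $|w|\leq(\log T)^\delta$, use $|\tilde M_{\ast,\sigma}(-w)|\leq 1$ to absorb the tail $|w|>(\log T)^\delta$, and apply Parseval to rewrite $\int_{\mathbb{C}}\tilde\Phi(w)\overline{\tilde M_{\ast,\sigma}(w)}|dw|$ as $\int_{\mathbb{C}}\Phi(z)M_{\ast,\sigma}(z)|dz|$.
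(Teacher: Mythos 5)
Your proposal matches the paper's own argument: the paper also deduces Theorem~\ref{thm6.1} by running the four-step machinery of \S\ref{sec3} and the split $m_\ast = m_\ast^{(1)}m_\ast^{(2)}$ of \S\ref{sec4}, substituting Fujii/Selberg or Luo for Lemma~\ref{lem3.4} and (for $\ast=f$) Murty's positive-density theorem for the Chebotarev input in \eqref{eq4.5}, then reproducing \S\ref{sec5} verbatim. You have correctly identified both the relaxation $D_\epsilon\to1/2$ and the cusp-form lower bound as the genuinely delicate ingredients.
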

The proof is completely the same as Theorem \ref{thm2.3}.

\end{document}